\documentclass{amsart}
\makeatletter

\def\ps@plain{%
  \ps@empty
  \def\@oddfoot{%
    \normalfont\large
    \hfil\thepage\hfil
  }%
  \let\@evenfoot\@oddfoot
}

\makeatother

\usepackage{verbatim}
\usepackage{xcolor}
\usepackage{tikz}
\usepackage{tikz-cd}
\usetikzlibrary{arrows.meta}
\usepackage{arydshln}
\usepackage{caption}
\usepackage{subcaption}
\usepackage{graphicx} 
\usepackage[hyphens]{url}
\usepackage{amsmath}
\usepackage{bbm}
\usepackage{hyperref}
\usepackage{mathrsfs}
\usepackage{todonotes}
\usepackage{amssymb}
\usepackage{mathabx}
\usepackage{enumitem}

\newtheorem{theorem}{Theorem}[section]
\newtheorem{proposition}[theorem]{Proposition}
\newtheorem{lemma}[theorem]{Lemma}
\newtheorem{corollary}[theorem]{Corollary}

\theoremstyle{definition}
\newtheorem{definition}[theorem]{Definition}

\theoremstyle{remark}
\newtheorem{remark}[theorem]{Remark}

\numberwithin{equation}{section}

\title{Totally Geodesic Submanifolds of Teichm\"uller Space in Complex Dimension at Least Three}
\author{Jia Longsong}
\address{School of Mathematical Sciences, Peking University, Beijing, 100871, P. R. China.}
\email{jialongsong@stu.pku.edu.cn}
\begin{document}
\maketitle

\begin{abstract}
Let $N\subset \mathcal{T}_{g,n}$ be a connected complex totally geodesic submanifold. We prove that if $\dim_{\mathbb{C}}N\ge3$, then $N$ is the image of an entire Teichm\"uller space under a covering construction induced by a finite branched cover of marked surfaces. This answers a question of Arana-Herrera and Wright.
\end{abstract}
\section{Introduction}

Let $\mathcal{T}_{g,n}$ denote the Teichm\"uller space of Riemann surfaces of genus $g$ and $n$ marked points, endowed with the Teichm\"uller metric. A connected complex submanifold $N\subset \mathcal{T}_{g,n}$ is \emph{totally geodesic} if the Teichm\"uller disc determined by any two distinct points of $N$ is contained in $N$.

In complex dimension one, totally geodesic submanifolds are precisely Teichm\"uller discs. In higher dimensions totally geodesic submanifolds are substantially more rigid. Wright proved that every totally geodesic submanifold of complex dimension greater than one projects to a closed algebraic totally geodesic subvariety of $\mathcal{M}_{g,n}$, and for fixed $(g,n)$ there are only finitely many such subvarieties \cite{MR4120819}.

The basic construction of higher-dimensional examples is provided by \emph{covering constructions}. A finite branched cover of marked topological surfaces $h:(S_X,D)\to (S_Y,B)$ with the marked points compatible with the ramification induces by pullback of complex structures a holomorphic isometric embedding
\begin{equation} f_h:\mathcal{T}(S_Y,B)\to \mathcal{T}(S_X,D). \nonumber \end{equation}

Arana-Herrera and Wright asked whether every totally geodesic submanifold of complex dimension at least three is a covering construction based on an entire Teichm\"uller space \cite[Question~10.1]{AranaHerreraWright2025}. Our main theorem gives an affirmative answer.

\begin{theorem}\label{thm:main}
Let $(S_X,D)$ be the marked topological surface underlying $\mathcal{T}_{g,n}$, and let $N\subset \mathcal{T}_{g,n}$ be a connected complex totally geodesic submanifold. If $\dim_{\mathbb{C}}N\ge 3$, then there exist a marked surface $(S_Y,B)$ and a finite orientation-preserving topological branched cover
\begin{equation} h:S_X\to S_Y \nonumber \end{equation}
such that
\begin{equation} h^{-1}(B)=D\cup\operatorname{Ram}(h) \nonumber \end{equation}
and
\begin{equation} N=f_h\bigl(\mathcal{T}(S_Y,B)\bigr). \nonumber \end{equation}
\end{theorem}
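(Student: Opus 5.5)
The plan is to translate the statement about $N$ into a statement about an invariant subvariety of quadratic differentials, classify that subvariety using rank $\ge 3$, and then read off the branched cover $h$.

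\textbf{Step 1: the associated subvariety.} For $X\in N$, let $\mathcal{M}_X\subset Q(X)$ be the set of quadratic differentials $q$ whose Teichm\"uller disc lies in $N$. Total geodesicity and the Teichm\"uller theorem identify $\mathcal{M}_X$ with the annihilator-dual of $T_XN$. It is a complex cone of dimension $\dim_{\mathbb{C}}N$ that surjects, via the Teichm\"uller map, onto the unit tangent sphere of $N$. Taking the union over $X\in N$ and projecting to the moduli space of quadratic differentials gives a $GL(2,\mathbb{R})$-invariant set $\mathcal{M}$. By Wright's algebraicity result \cite{MR4120819}, $\mathcal{M}$ is a closed invariant subvariety. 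Passing to a generic stratum and to holonomy double covers, I would record the standard facts for such loci:
\[
\dim_{\mathbb{C}}\mathcal{M}=2\dim_{\mathbb{C}}N,\qquad \operatorname{rank}\mathcal{M}=\dim_{\mathbb{C}}N,\qquad \operatorname{rel}\mathcal{M}=0 .
\]
The last equality holds because any nonzero rel direction would move $X$ inside $N$ while fixing all periods, contradicting the fact that $N$ is a complex manifold of exactly this dimension. So $\mathcal{M}$ is a rel-zero invariant subvariety of rank $r=\dim N\ge 3$.

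\textbf{Step 2: classification (the main obstacle).} I want to show that a rel-zero invariant subvariety of rank $r\ge 3$ is a \emph{locus of covers}. That is, I want a component $\mathcal{Q}$ of a stratum of quadratic differentials on $(S_Y,B)$ such that $\mathcal{M}$ consists of pullbacks $h^*q$ with $q\in\mathcal{Q}$, where $\mathcal{Q}$ is the full stratum of its dimension, i.e. the whole cotangent bundle of $\mathcal{T}(S_Y,B)$ over each point. I would argue by induction on rank using the Apisa--Wright boundary machinery. Take a horizontally periodic surface in $\mathcal{M}$ whose cylinders split into $r$ equivalence classes. Degenerate one class via the cylinder deformation theorem to reach a boundary subvariety $\mathcal{M}'$ of rank $r-1$, which is still rel-zero and still comes from a totally geodesic submanifold of a smaller Teichm\"uller space. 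By induction $\mathcal{M}'$ is a locus of covers. The key point is to lift the covering map back across the degeneration, by gluing the collapsed cylinder class, to a covering map on all of $\mathcal{M}$. Rank $\ge 3$ is used to get two independent degenerations whose cover structures must agree on overlaps, which forces a global cover. This is where I expect the real difficulty.

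The base case is also delicate. In rank $2$ there are exotic totally geodesic surfaces (McMullen--Mukamel--Wright), so the induction cannot bottom out there. It must instead start at rank $3$. I would try to establish that base case directly from the Mirzakhani--Wright full-rank theorem, which says full-rank subvarieties are strata components or hyperelliptic loci, applied to the quotient by the maximal deck group. Alternatively it could come from the Apisa--Wright high-rank theorem, using rel $=0$ to push the rank bound through.

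\textbf{Step 3: conclusion.} Given the locus-of-covers structure, let $h:S_X\to S_Y$ be the underlying topological branched cover. Let $B$ consist of the branch values, the images of $D$, and the poles and free marked points of $\mathcal{Q}$, so that $h^{-1}(B)=D\cup\operatorname{Ram}(h)$ after discarding unmarked unramified preimages. Then $f_h(\mathcal{T}(S_Y,B))$ is a connected totally geodesic submanifold contained in the closure of $N$. It has the same dimension as $N$, since $\dim\mathcal{T}(S_Y,B)=\dim\mathcal{Q}/2=r$. Both are closed complex submanifolds, and $f_h(\mathcal{T}(S_Y,B))$ passes through a point of $N$ with the same tangent space. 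Totally geodesic submanifolds are determined by one point and one tangent space, since they are unions of Teichm\"uller discs. Hence $N=f_h(\mathcal{T}(S_Y,B))$.
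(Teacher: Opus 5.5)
Step 2 is a genuine gap, and not a technical one. It would by itself imply the theorem, and no classification of this kind is available to cite.

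\emph{As stated, Step 2 is false.} A stratum of quadratic differentials all of whose singularities have odd order is rel-zero. For instance, $\mathcal{Q}(3,1^{4g-7})$ with $g\ge 3$ is a rel-zero invariant subvariety of dimension $6g-8$ and rank $3g-4\ge 5$. For each $x\in X$, the differentials vanishing to order $3$ at $x$ form a space of dimension $h^0(K_X^{\otimes2}(-3x))=3g-6$. So the fibre over $X$ has dimension at most $3g-5$, and $\pi(\mathcal{M})$ is dense in $\mathcal{M}_g$. This gives $\dim\pi(\mathcal{M})=3g-3\neq\frac12\dim\mathcal{M}$, which is impossible for the cotangent locus of any $f_h(\mathcal{T}(S_Y,B))$. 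Hence rank $\ge 3$ and $\operatorname{rel}=0$ cannot suffice; the totally geodesic condition $\dim\pi(\mathcal{M})=\frac12\dim\mathcal{M}$ must be carried through the induction.

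\emph{The induction does not preserve even your weaker hypotheses.} Collapsing a cylinder equivalence class lowers the dimension by exactly one (Apisa--Wright), so $\mathcal{M}'$ has odd dimension $2r-1$. Since $\dim=2\operatorname{rank}+\operatorname{rel}$, its rel is odd, hence positive. So $\mathcal{M}'$ is neither rel-zero nor the cotangent locus of a totally geodesic submanifold, and the inductive hypothesis never applies. The step of lifting the cover back across the degeneration, where all the content would lie, is given no mechanism.

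\emph{There is no base case.} Mirzakhani--Wright needs full rank, and Apisa--Wright needs rank at least $g/2+1$; a rank-$3$ locus in large genus satisfies neither. Passing to \emph{the quotient by the maximal deck group} presupposes the covering structure you are trying to produce.

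Two further points would need repair even granting Step 2.

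In Step 3 you cannot discard unmarked unramified preimages, because the compatibility condition constrains all of $h^{-1}(B)$. You put every point of $\operatorname{Br}(h)\cup h(D)$ into $B$ and then assert that the downstairs family is the full cotangent bundle of $\mathcal{T}(S_Y,B)$. What actually has to be shown is that every such point has all of its unramified preimages in $D$. In the paper this is the statement $B=D_Y$ of Proposition~\ref{prop:marked-completion}, proved with a second norm-one projection on $Q(Y,B)$, the $L^1$ projection lemma, and Riemann--Roch.

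In Step 1, \emph{moving $X$ inside $N$ while fixing periods} contradicts nothing, since rel deformations do change $X$. The conclusion $\operatorname{rel}=0$ is true for a different reason:
\begin{itemize}
\item the fibre directions $Q_XN$ give a $d$-dimensional space $F$ of classes of holomorphic forms on the holonomy double cover;
\item $T\mathcal{M}$ is defined over $\mathbb{R}$, so $T\mathcal{M}=F\oplus\overline{F}$;
\item by the Hodge decomposition, $F\oplus\overline{F}$ injects into absolute cohomology.
\end{itemize}

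For comparison, the paper never classifies invariant subvarieties. It builds $h_X$ directly from the linear system $|Q_XN|$ and uses the $L^1$ lemma to show that $P_X$ is a weighted fibre average. It uses $d\ge 3$ only through the bound $m(d-1)\le 4g_X-4+n$, to find a differential whose lift to the Galois closure is a deck-group eigenvector. The Teichm\"uller pairing then forces the character to be trivial, which yields $P_X=\frac1m h^*h_*$.
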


\medskip

There is a useful consequence for Teichm\"uller dynamics. Let
\begin{equation} \pi:\mathcal{Q}(\kappa)\to \mathcal{M}_{g,n}, \qquad (X,q)\longmapsto X, \nonumber \end{equation}
denote the projection from a principal stratum of quadratic differentials. Kahn and Wright observed that the projection of any $GL_2(\mathbb{R})$-orbit closure in the principal stratum is totally geodesic; moreover, the dimension of the projection is one half of the dimension of the orbit closure \cite[Proposition~1.5]{MR4447597}. Combining this with Theorem~\ref{thm:main} gives the following.

\begin{corollary}\label{cor:orbit-closures}
Let $\mathcal{M}$ be a $GL_2(\mathbb{R})$-orbit closure in the principal stratum of quadratic differentials over $\mathcal{M}_{g,n}$. If $\dim_{\mathbb{C}}\pi(\mathcal{M})\ge 3$, equivalently, if $\dim_{\mathbb{C}}\mathcal{M}\ge 6$, then every connected lift of $\pi(\mathcal{M})$ to $\mathcal{T}_{g,n}$ is a covering construction.
\end{corollary}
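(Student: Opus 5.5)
The corollary follows from Theorem~\ref{thm:main} combined with the Kahn--Wright observation \cite[Proposition~1.5]{MR4447597}. The plan is to verify that each connected lift of $\pi(\mathcal{M})$ to $\mathcal{T}_{g,n}$ satisfies the hypotheses of Theorem~\ref{thm:main}, and then read off the conclusion.

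First, I will record the facts supplied by \cite[Proposition~1.5]{MR4447597}. For an orbit closure $\mathcal{M}$ in the principal stratum, the image $\pi(\mathcal{M})\subset\mathcal{M}_{g,n}$ is a totally geodesic subvariety, and $\dim_{\mathbb{C}}\pi(\mathcal{M})=\tfrac12\dim_{\mathbb{C}}\mathcal{M}$. The dimension formula immediately gives the stated equivalence: $\dim_{\mathbb{C}}\pi(\mathcal{M})\ge3$ if and only if $\dim_{\mathbb{C}}\mathcal{M}\ge6$.

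Second, I will pass to Teichm\"uller space. Let $\Pi:\mathcal{T}_{g,n}\to\mathcal{M}_{g,n}$ be the quotient by the mapping class group, and let $N$ be a connected component of $\Pi^{-1}(\pi(\mathcal{M}))$, i.e.\ a connected lift. The hypotheses of Theorem~\ref{thm:main} require $N$ to be a connected complex submanifold. Here I must be careful, because $\pi(\mathcal{M})$ is an orbifold subvariety of $\mathcal{M}_{g,n}$ and might have self-intersections. This is the only step needing attention, and I will handle it in one of two ways.
\begin{enumerate}
\item Adopt the convention, implicit in Wright's framework \cite{MR4120819}, that ``totally geodesic subvariety'' means its lifts to $\mathcal{T}_{g,n}$ are totally geodesic complex submanifolds. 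This is how Kahn--Wright phrase the statement.
\item Argue directly. Geodesic discs through a point of $N$ are Teichm\"uller discs generated by quadratic differentials $q$ with $(X,q)\in\mathcal{M}$. Local linearity of $\mathcal{M}$ in period coordinates then shows that a local branch of the lift is the image of a linear subspace, hence smooth. Two distinct branches through a common point would each be a totally geodesic submanifold, and such a branch is determined by its tangent data. So I would take $N$ to be an irreducible branch component, which is a smooth, connected, totally geodesic complex submanifold.
\end{enumerate}

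Third, I will check the dimension. Since $\Pi$ is a local biholomorphism away from orbifold points, and in any case has discrete fibres, we get $\dim_{\mathbb{C}}N=\dim_{\mathbb{C}}\pi(\mathcal{M})\ge3$.

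Finally, I will apply Theorem~\ref{thm:main}. It produces a marked surface $(S_Y,B)$ and a branched cover $h$ with $h^{-1}(B)=D\cup\operatorname{Ram}(h)$ and $N=f_h(\mathcal{T}(S_Y,B))$. This says exactly that $N$ is a covering construction.

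I expect no serious obstacle: all the content lives in Theorem~\ref{thm:main} and in \cite{MR4447597}. The one point to write carefully is the identification of lifts with totally geodesic submanifolds in the second step.
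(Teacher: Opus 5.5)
Your proposal is correct and follows the paper's argument: the paper's proof invokes \cite[Proposition~1.5]{MR4447597} to get that $\pi(\mathcal{M})$ is totally geodesic with $\dim_{\mathbb{C}}\pi(\mathcal{M})=\tfrac12\dim_{\mathbb{C}}\mathcal{M}$, and then applies Theorem~\ref{thm:main} to a lift. Your extra care about lifts being genuine submanifolds makes explicit what the paper takes for granted through the convention in your option (i); option (ii) as written would need more than local linearity, because the projection to $\mathcal{T}_{g,n}$ of a linear subspace in period coordinates is not automatically smooth, so option (i) is the one to rely on.
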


\begin{proof}
By \cite[Proposition~1.5]{MR4447597}, the subvariety $\pi(\mathcal{M})$ is totally geodesic and
\begin{equation} \dim_{\mathbb{C}}\pi(\mathcal{M})=\frac{1}{2}\dim_{\mathbb{C}}\mathcal{M}. \nonumber \end{equation}
The conclusion therefore follows from Theorem~\ref{thm:main}.
\end{proof}

In particular, among totally geodesic subvarieties obtained as projections of orbit closures in the principal stratum, any primitive example of dimension greater than one can occur only in complex dimension two. This shows that the known primitive surfaces of \cite{MR3664815,MR4155219} occur precisely in the only dimension in which primitive higher-dimensional examples can exist.

The result most closely related to Theorem~\ref{thm:main} is the rigidity theorem of Benirschke and Serv\'an, which shows that holomorphic isometric embeddings between Teichm\"uller spaces are induced by covering constructions \cite{MR4768147}. Theorem~\ref{thm:main} does not follow directly from the rigidity theorem for isometric embeddings of Teichm\"uller spaces. In that setting both Teichm\"uller spaces are given in advance, and hence both cotangent spaces are already full spaces of quadratic differentials on known Riemann surfaces. For a general totally geodesic submanifold $N$, there is no Riemann surface downstairs to start with.

Fix $X\in N$, and let $D_X$ denote its marked points. Let $Q_X N\subset Q(X,D_X)$ be the space of quadratic differentials whose Teichm\"uller discs are contained in $N$. By \cite{MR4768147}, these spaces form a holomorphic vector bundle over $N$ of rank $d=\dim_{\mathbb{C}}N$. Cotangent restriction identifies $Q_X N$ isometrically with $T_X^*N$, and hence determines a norm-one projection
\begin{equation} P_X:Q(X,D_X)\to Q_X N. \nonumber \end{equation}

The central problem of the paper is therefore a reconstruction problem: starting only from the intrinsic subspace $Q_X N\subset Q(X,D_X)$, one must recover a Riemann surface downstairs, a finite map to it, the full space of quadratic differentials downstairs, and finally the marked points. The following generic reconstruction theorem is the technical core of the proof.

\begin{theorem}\label{thm:generic-reconstruction}
Let $N\subset\mathcal{T}_{g,n}$ be a complex totally geodesic submanifold of dimension $d\ge 3$. There exists a nonempty dense open subset $N^\circ\subset N$ such that, for every $X\in N^\circ$, there are a compact Riemann surface $Y_X$, a nonconstant holomorphic map
\begin{equation} h_X:X\to Y_X, \nonumber \end{equation}
and a finite set $B_X\subset Y_X$ satisfying
\begin{equation} h_X^{-1}(B_X) = D_X\cup\operatorname{Ram}(h_X) \nonumber \end{equation}
and
\begin{equation} Q_X N = \frac{1}{\deg h_X}\, h_X^*Q(Y_X,B_X). \nonumber \end{equation}
\end{theorem}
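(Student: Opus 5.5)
Here is the plan: recover $Y_X$ from the ratios of elements of $Q_X N$, then identify $Q_X N$ with the full space of quadratic differentials on $Y_X$ by a dimension count.

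\textbf{Step 1: the map from ratios.} Consider the field $K_X\subset\mathbb{C}(X)$ generated by all ratios $q_1/q_2$ with $q_1,q_2\in Q_X N$, $q_2\neq 0$. Since $K_X$ is a subfield of the function field of $X$ of transcendence degree one (there are nonconstant ratios because $d\ge 2$), it is the function field of a compact Riemann surface $Y_X$. The inclusion $K_X\subset\mathbb{C}(X)$ gives a nonconstant holomorphic map $h_X:X\to Y_X$. By construction every $q\in Q_X N$ is $h_X$-compatible up to a common factor: fixing $q_0\in Q_X N$, each $q=(q/q_0)q_0$ with $q/q_0$ pulled back from $Y_X$.

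\textbf{Step 2: $q_0$ itself is a pullback.} This is where total geodesicity enters, and where I would restrict to a dense open $N^\circ$ on which $Q_X N$ contains a differential with only simple zeros, or where the relevant orbit closure data is generic. Wright's and Benirschke--Serv\'an's results say that Teichm\"uller discs of $q\in Q_X N$ stay in $N$. Hence the $GL_2(\mathbb{R})$-orbit closure of $(X,q)$ projects into $N$, and the tangent space of $N$ is spanned by relative period coordinates of $q$ in the sense of Kahn--Wright. From this I would extract two facts: the zeros of differentials in $Q_X N$ move in families constrained by $h_X$, and the local square roots of $q_0$ are, up to the deck-type symmetries of $h_X$, lifted from $Y_X$. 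The concrete check is that $q_0=\frac{1}{\deg h_X}h_X^*\eta_0$ for a meromorphic quadratic differential $\eta_0$ on $Y_X$. I would compute this locally at a generic point, where $h_X$ is a local biholomorphism and the sheets are permuted, and use the norm-one projection $P_X$ to show $q_0$ is invariant under the pushforward--pullback averaging $\frac{1}{\deg h}h^*h_*$. Then $Q_X N\subset \frac{1}{\deg h_X}h_X^*Q(Y_X,B_X)$, where $B_X$ is chosen to contain the images of the marked points and branch values, so that the pulled-back differentials have at most simple poles.

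\textbf{Step 3: dimension count and equality.} Since $h_X^*$ is injective, $\dim Q(Y_X,B_X)=3g_Y-3+|B_X|\ge d$. To get equality, I would use that $f_{h_X}(\mathcal{T}(Y_X,B_X))$ is totally geodesic and contains $X$ with cotangent space $\frac{1}{\deg}h^*Q(Y_X,B_X)$. The discs of $N$ at $X$ lie in this image, and Wright's rigidity/finiteness then gives that $N$ is an open subset of the image, forcing equality. Finally I would verify $h_X^{-1}(B_X)=D_X\cup\operatorname{Ram}(h_X)$. The inclusion $\supseteq$ holds because ramification points must map to poles allowed in $B_X$ (otherwise some pullback has zeros at ramification, which is fine) or to branch values. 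The reverse inclusion requires that every unmarked preimage of a point of $B_X$ is a ramification point, since a simple pole pulled back to an unramified, unmarked point would give a non-integrable-at-$X$ differential not in $Q(X,D_X)$. So minimality of $B_X$ is chosen accordingly.

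\textbf{Main obstacle.} Step 2 is the hardest: showing that $Q_X N$ is literally a pullback space and not merely a twist of one. I expect $d\ge3$ to be essential there. With at least three independent differentials, the ratio field has enough elements that the multiplicative $1$-cocycle obstructing $q_0$ from descending is trivial. Concretely, the divisor of $q_0$ should be $h_X$-saturated, and I would prove this by intersecting the zero divisors of generic pencils in $Q_X N$. In dimension two this fails, consistent with the existence of primitive surfaces.
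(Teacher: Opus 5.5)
\textbf{There is a genuine gap in Step 2, and a second one in Step 3.}

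\textbf{Step 2.} Your Step 1 is the paper's construction (Proposition~\ref{prop:ratio-quotient}), but Step 2, which carries the theorem, has no working mechanism. Using $P_X$ to show $q_0=\frac1m h^*h_*q_0$ is circular. The identity $P_Xq_0=q_0$ holds trivially, and $P_X$ is identified with $\frac1m h^*h_*$ only after a descending $q_0$ is in hand (Proposition~\ref{prop:trace-reconstruction}). A priori the $L^1$ projection lemma (Lemma~\ref{lem:L1-projection}) gives only a weighted fiber average $u_q=\sum_i\alpha_i\psi_i$ with $\alpha_i=\overline{\varphi_i}/(|\varphi_i|\sum_j|\varphi_j|)$. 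The paper uses $d\ge3$ through the degree bound $m(d-1)\le 4g_X-4+n$ to make evaluation on a fiber surjective (apart from one conic case). This forces the $\alpha_i$ to be holomorphic and yields $g^*p^*q_0=\chi(g)p^*q_0$ on the Galois closure (Lemma~\ref{lem:sheet-character}).

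Your proposed way of removing the remaining twist cannot work: you argue that $d\ge3$ gives enough ratios to trivialize the cocycle, via $h_X$-saturated divisors. But the ratio field is $h_X^*\mathbb{C}(Y_X)$ for every $d\ge2$. And a divisor of the form $h^*E+2R_h$ only shows that $\mathcal{O}_Y(E)\otimes K_Y^{-2}$ lies in the torsion group $\ker(h^*:\operatorname{Pic}Y\to\operatorname{Pic}X)$. For instance, let $X\to Y$ be an unramified double cover with involution $\tau$ and $g_Y\ge3$. The $\tau$-anti-invariant quadratic differentials form a $(3g_Y-3)$-dimensional space with norm-one projection $\frac12(1-\tau^*)$. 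Their ratios generate $h^*\mathbb{C}(Y)$ and their divisors are pullbacks, yet no nonzero one descends.

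What rules this out is total geodesicity used through deformation (Lemma~\ref{lem:character-trivial}). The deck group fixes the lift of the Teichm\"uller disc of $q_0$ to the Galois closure, so $\langle v_Z,p^*q_0\rangle=\chi(g)\langle v_Z,p^*q_0\rangle$ with a nonzero pairing, forcing $\chi=1$. This is also what the dense open set $N^\circ$ is for: it provides holomorphic, topologically constant families of the $h_X$ and of their Galois closures (Lemma~\ref{lem:relative-quotient}); simple zeros play no role. Your appeal to orbit closures and Kahn--Wright period coordinates does not supply this step.

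\textbf{Step 3.} Wright's finiteness theorem does not prevent $N$ from being a proper totally geodesic submanifold of $f_{h_X}(\mathcal{T}(Y_X,B_X))$. Moreover, that image is known to be totally geodesic only once compatibility holds, which you check afterwards.

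The real issue is the pole set. Once $q_0$ descends, the weights become $1/(m\varphi)$, so $P_X=\frac1m h^*h_*$ and $Q_XN=\frac1m h^*Q(Y,D_Y)$ with $D_Y=\{y:h^{-1}(y)\subset D_X\cup\operatorname{Ram}(h)\}$. Your pole-counting argument recovers only this $D_Y$. Compatibility needs $D_Y\supseteq\operatorname{Br}(h)\cup h(D_X)$: every unramified point over a branch value, or over the image of a marked point, must itself be marked. $B_X$ cannot be ``chosen'' to make this true.

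The paper proves $D_Y=B:=\operatorname{Br}(h)\cup h(D_X)$ (Proposition~\ref{prop:marked-completion}) as follows. It maps a neighborhood of $X$ in $N$ into $\mathcal{T}(Y,B)$ and obtains a norm-one projection $Q(Y,B)\to Q(Y,D_Y)$. It then applies the $L^1$ lemma on $Y$, where the ratio map of $Q(Y,D_Y)$ is generically injective, to show this projection is the identity. Finally it compares dimensions by Riemann--Roch.
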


\subsection{Idea of the proof }

The proof begins with a natural construction. The linear system determined by $Q_X N$ gives a map
\begin{equation} X\to \mathbb{P}(Q_X N^\vee).  \nonumber \end{equation}

Taking the normalization of its image gives a compact Riemann surface $Y_X$ and a nonconstant holomorphic map \begin{equation} h_X:X\to Y_X.   \nonumber \end{equation}

The essential property of this construction is that the meromorphic functions on $Y_X$ are generated, after pullback, by the ratios of elements of $Q_X N$.  Generically, the fibers of $h_X$ are precisely the equivalence classes of points that cannot be distinguished by ratios of elements of $Q_X N$.

The main part of the proof is to show that this algebraic geometric construction also carries the Teichm\"uller geometry of $N$.  The isometric identification $Q_X N\simeq T_X^*N$ gives a norm-one projection \begin{equation} P_X:Q(X,D_X)\to Q_X N.  \nonumber \end{equation} After dividing by a nonzero element of $Q_X N$, an $L^1$-projection lemma shows that $P_X$ is given, fiberwise, by a weighted average over the fibers of $h_X$.

The crucial step is to show that the weighted average is the ordinary trace average. Using the degree estimate together with the fiberwise averaging formula, we find a nonzero $q_0\in Q_X N$ whose pullback to the Galois closure is an eigenvector for the deck group. Thus
\begin{equation} g^*p^*q_0=\chi(g)p^*q_0  \nonumber \end{equation}
for a character $\chi$. We then vary along the Teichm\"uller disc generated by $q_0$; the deck group fixes the lifted disc, and the Teichm\"uller pairing forces $\chi$ to be trivial. Hence $q_0$ descends to $Y_X$, and the weighted fiberwise average becomes the ordinary normalized trace.

It remains to recover the marked points downstairs. The trace description determines the possible poles of the descended quadratic differentials. Applying the same norm-one projection argument on $Y_X$, followed by Riemann-Roch, shows that these poles occur exactly at the branch values of $h_X$ and the images of the marked points of $X$. Consequently $h_X^{-1}(B_X)=D_X\cup\operatorname{Ram}(h_X),$ and
\begin{equation}     Q_X N=\frac{1}{\deg h_X}\,h_X^*Q(Y_X,B_X).\nonumber \end{equation} This is precisely the cotangent description of a covering construction and completes the reconstruction.

\textbf{ Where the dimension hypothesis enters.  }

A useful feature of the argument is that the hypothesis $d\ge 3$ is needed only at one stage. The cotangent space $Q_X N$ and the projection $P_X$ exist for every positive-dimensional totally geodesic submanifold. When $d\ge 2$, ratios of elements of $Q_X N$ already determine a nonconstant finite map
\begin{equation} h_X:X\to Y_X, \nonumber \end{equation}
and the $L^1$ argument already describes $P_X$ as a weighted average along the fibers of $h_X$.

In particular, for a totally geodesic surface, the same construction produces
\begin{equation} h_X:X\to \mathbb{P}^1, \nonumber \end{equation}
and the projection still has a fiberwise averaging feature. Thus the reconstruction does not simply break down in complex dimension two.

The hypothesis $d\ge3$ is used only to produce a differential descending to $Y_X$.  The projective curve determined by $Q_X N$ is a line when $d=2$, while for $d\ge3$ its degree is at least two.  The degree bound is then strong enough to ensure that, over a generic point of $Y_X$, the values of a global quadratic differential can be chosen arbitrarily and independently at the different points of the fiber, except for one conic case in dimension three. This freedom allows us to compare the different sheets of the cover and find a differential whose pullback to the Galois closure is multiplied by a scalar under each deck transformation. Teichm\"uller deformation then shows that all these scalars are equal to one, so the differential descends to $Y_X$. The remaining steps do not essentially use the hypothesis $d\ge3$.

Thus, in complex dimension two, the method still recovers the quotient map and the weighted fiberwise projection, but it does not force the weights to become the ordinary trace. This gives a concrete obstruction in precisely the dimension where primitive totally geodesic examples are known to exist.

\subsection{ Related Work }  Rigidity of Teichm\"uller spaces as complex and metric spaces has a long history. Classical results of Royden and Earle and Kra show that, apart from the familiar low-complexity exceptions, biholomorphisms and isometries between finite type Teichm\"uller spaces are geometric \cite{MR288254,MR348098}. Markovi\'c developed methods for studying linear isometries between $L^1$-spaces of integrable holomorphic quadratic differentials and used them to study biholomorphic maps between Teichm\"uller spaces \cite{MR2019982}; Earle and Markovi\'c gave a corresponding rigidity result for the $L^1$-spaces of holomorphic quadratic differentials on Riemann surfaces of finite type \cite{MR2019983}. Gekhtman and Greenfield later showed, up to low genus exceptions, that holomorphic isometric submersions between Teichm\"uller spaces are forgetful maps \cite{MR4425346}. In the complementary direction, Benirschke and Serv\'an proved that holomorphic isometric embeddings between Teichm\"uller spaces are induced by covering constructions \cite{MR4768147}.

Related rigidity for isometric discs was established by Antonakoudis, who proved that every totally geodesic isometry from the unit disc into a finite-dimensional Teichm\"uller space is holomorphic or anti-holomorphic \cite{MR3608291}. There are several other indications of strong rigidity in higher-dimensional totally geodesic geometry. Benirschke associated natural linear system to generic curves in totally geodesic subvarieties and obtained rank and gonality restrictions \cite{MR4798636}. Recent work on the Deligne and Mumford boundary also reveals strong constraints on the degeneration of totally geodesic subvarieties \cite{AranaHerreraWright2025,BenirschkeDozierRached2024}.

\subsection{Organization of the paper}  Section~2 reviews quadratic differentials and cotangent spaces of totally geodesic submanifolds, covering constructions, and the $L^1$-projection lemma used later. Section~3 proves the generic reconstruction theorem in three steps. In Section~3.1, we construct the quotient surface $Y_X$ and the finite map $h_X:X\to Y_X$. In Section~3.2, we reconstruct the quadratic differentials downstairs and identify the projection with the normalized trace. In Section~3.3, we recover the marked points and verify the compatibility condition for a covering construction. Section~4 deduces Theorem~\ref{thm:main} from the generic reconstruction theorem. The appendix establishes the holomorphic variation of the quotient maps and their Galois closures.

\subsection{Acknowledgements}

The author is deeply grateful to his advisor, Gang Tian, for his constant support. He thanks Xilun Li and Yanan Ye for many helpful discussions on complex geometry, and Minghao Miao, Linsheng Wang and Ruiming Liang for their help with questions in algebraic geometry. The author is especially grateful to Alex Wright for many insightful comments on the results, proofs, and broader context of this work, as well as for numerous suggestions that substantially improved the presentation and organization of the paper.

\section{Background and preliminaries}\label{sec:background}

\subsection{Quadratic differentials and cotangent spaces of totally geodesic submanifolds}

We work throughout with compact Riemann surfaces with marked points rather than punctured surfaces. We use the same notation for a finite subset of a Riemann surface and the associated reduced effective divisor; the intended meaning will be clear from context.

For a finite set $E$ on $X$, let $Q(X,E)=H^0(X,K_X^{\otimes2}(E))$ be the space of meromorphic quadratic differentials with at most simple poles in $E$. It can be identified with the cotangent space $T_X^*\mathcal{T}(S,E)$ with pairing
\begin{equation} \langle[\mu],q\rangle=\int_X\mu q. \nonumber \end{equation}
where $[\mu]$ denotes the class of a bounded Beltrami differential modulo infinitesimally trivial ones.

The norm of $Q(X,E)$ is $\|q\|_1=\int_X|q|$, and the Teichm\"uller norm is dual to $\|\cdot\|_1$. If $q=\varphi(z)\,dz^2\ne0$, the initial unit tangent to its Teichm\"uller disc is represented off the zero and pole set by
\begin{equation} \mu_q=\frac{\overline{\varphi(z)}}{|\varphi(z)|}\frac{d\overline{z}}{dz}. \nonumber \end{equation}
This gives
\begin{equation} \label{eq:q-pairing} \langle[\mu_q],q\rangle=\|q\|_1. \end{equation}

Let $g:(S,E)\to(S,E)$ be an orientation-preserving self-homeomorphism; points of $E$ may be permuted. Then $g$ induces an action on $\mathcal{T}(S,E)$:
\begin{equation} g\cdot[Z,f]=[Z,f\circ g^{-1}]. \nonumber \end{equation}
The map $g$ also induces actions on the tangent and cotangent bundles of $\mathcal{T}(S,E)$. On tangent spaces, it's given by $g\cdot[\mu]:=(dg)_X([\mu])$. If $g$ is biholomorphic at $X$, then $g\cdot[\mu]=[(g^{-1})^*\mu]$, while the cotangent action is $q\mapsto g^*q$. Change of variables gives
\begin{equation} \label{eq:pairing-equivariant} \langle g\cdot[\mu],q\rangle=\langle[\mu],g^*q\rangle. \end{equation}

For a complex totally geodesic $N\subset\mathcal{T}_{g,n}$, let $Q_X N\subset Q(X,D_X)$ be the space of quadratic differentials whose Teichm\"uller discs lie in $N$. Benirschke and Serv\'an prove that the spaces $Q_X N$ are the fibers of a holomorphic vector subbundle over $N$ of rank $\dim_{\mathbb{C}}N$ \cite{MR4768147}. The subspace $Q_X N$ can be identified with $T^*_X N$, and we can use restriction to give a contractive projection map from $Q(X,D_X)$ to $Q_X N$. We describe it as follows:

Let $i:N\hookrightarrow\mathcal{T}_{g,n}$ be the inclusion. For $X\in N$, define the restriction map \begin{equation} \rho_X=(di_X)^*:Q(X,D_X)=T_X^*\mathcal{T}_{g,n}\to T_X^*N.   \nonumber \end{equation} Equivalently,\begin{equation} \rho_X(q)(v)=\langle v,q\rangle, \qquad q\in Q(X,D_X),\ v\in T_XN.  \nonumber \end{equation} We equip $T_X^*N$ with the quotient norm \begin{equation} \|\lambda\|_{\mathrm{quot}} =\inf\{\|q\|_1:\rho_X(q)=\lambda\}.  \nonumber \end{equation}

\begin{proposition}\label{prop:canonical-projection}
Let $\rho_X: Q(X,D_X)\to T_X^*N$ be the restriction map on cotangent spaces. The restriction
\begin{equation} \rho_X|_{Q_X N}: Q_X N\to T_X^*N \nonumber \end{equation}
is an isometric isomorphism. Consequently
\begin{equation} P_X=(\rho_X|_{Q_X N})^{-1}\rho_X: Q(X,D_X)\to Q_X N \nonumber \end{equation}
is a complex-linear projection with $\|P_X\|\le1$. If $\dim N>0$, then $\|P_X\|=1$.
\end{proposition}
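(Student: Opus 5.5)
The plan is to prove the isometry statement first, by showing two inequalities for every $q\in Q_X N$; the projection statements then follow formally.

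\textbf{Step 1: $\|\rho_X(q)\|_{\mathrm{quot}}\le\|q\|_1$.} This is immediate, since $q$ itself is a preimage of $\rho_X(q)$ under $\rho_X$.

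\textbf{Step 2: the reverse inequality.} This is the key step. Fix $0\ne q\in Q_X N$. By definition of $Q_X N$, the Teichm\"uller disc generated by $q$ lies in $N$, so its initial tangent vector $v=[\mu_q]$ lies in $T_XN$. The vector $v$ has Teichm\"uller norm $1$: it is the unit tangent vector of a disc that is isometrically embedded for the Teichm\"uller metric, and the infinitesimal Teichm\"uller norm is the Finsler norm dual to $\|\cdot\|_1$. Now let $q'$ be any element with $\rho_X(q')=\rho_X(q)$. Using \eqref{eq:q-pairing} and duality,
\begin{equation} \|q\|_1=\langle v,q\rangle=\rho_X(q)(v)=\rho_X(q')(v)=\langle v,q'\rangle\le\|v\|\,\|q'\|_1=\|q'\|_1. \nonumber \end{equation}
Taking the infimum over $q'$ gives $\|\rho_X(q)\|_{\mathrm{quot}}\ge\|q\|_1$. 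Hence $\rho_X|_{Q_X N}$ is isometric, and in particular injective.

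\textbf{Step 3: surjectivity.} Injectivity from Step 2 gives $\dim Q_X N\le\dim T_X^*N$. By Benirschke--Serv\'an, $\dim Q_X N=\dim_{\mathbb{C}}N=\dim T_X^*N$. Therefore $\rho_X|_{Q_X N}$ is a linear isometric isomorphism. Here the only external input is the rank statement.

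\textbf{Step 4: the projection $P_X$.} Set $P_X=(\rho_X|_{Q_X N})^{-1}\rho_X$. It is complex-linear, since $\rho_X$ is the dual of a complex-linear map; this uses that $N$ is a complex submanifold. It is the identity on $Q_X N$, so $P_X^2=P_X$. For the norm bound,
\begin{equation} \|P_Xq\|_1=\|\rho_X(q)\|_{\mathrm{quot}}\le\|q\|_1, \nonumber \end{equation}
where the equality is the isometry from Step 2. Thus $\|P_X\|\le1$. If $\dim N>0$, then $Q_X N\ne0$, and any nonzero $q\in Q_X N$ satisfies $P_Xq=q$, which gives $\|P_X\|=1$.

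\textbf{Main obstacle.} The delicate point is justifying that $\|[\mu_q]\|=1$ in the norm dual to $\|\cdot\|_1$, so that the duality inequality in Step 2 is legitimate. I would cite the standard fact that the Teichm\"uller infinitesimal norm of $[\mu]$ equals $\sup_{\|q'\|_1=1}|\langle[\mu],q'\rangle|$. Combining this with $|\mu_q|=1$ almost everywhere and \eqref{eq:q-pairing} shows the supremum equals $1$.
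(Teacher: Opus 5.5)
Your proposal is correct and follows the paper's proof essentially step for step: contractivity of $\rho_X$, the lower bound $\|\rho_X(q)\|_{\mathrm{quot}}\ge\|q\|_1$ obtained by pairing with $[\mu_q]\in T_XN$, surjectivity from the Benirschke--Serv\'an rank count, and $\|P_X\|\le1$ from isometry plus contractivity. The step you flag as the main obstacle is immediate: $|\mu_q|=1$ a.e. gives $|\langle[\mu_q],q'\rangle|=|\int_X\mu_q q'|\le\|q'\|_1$, which is all the duality inequality needs. Also note that your Step 1 argument applies to every $q\in Q(X,D_X)$, not only to $q\in Q_XN$, and Step 4 uses it in that generality.
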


\begin{proof}
Since  $T_X^*N\simeq Q(X,D_X)/\ker\rho_X$, we have
\begin{equation} \|\rho_X(q)\|_{\mathrm{quot}}=\inf_{k\in\ker\rho_X}\|q+k\|_1.  \nonumber \end{equation}
This implies $\|\rho_X(q)\|_{\mathrm{quot}}\le\|q\|_1$, so $\rho_X$ is contractive.

If $0\ne q\in Q_X N$, the initial unit tangent $[\mu_q]$ to the Teichm\"uller disc of $q$ lies in $T_XN$, and (\ref{eq:q-pairing}) gives that
\begin{equation} \|\rho_X(q)\|_{\mathrm{quot}}\ge |\langle[\mu_q],q\rangle|=\|q\|_1. \nonumber \end{equation}
Thus $\rho_X|_{Q_X N}$ is an isometry, hence injective. It is also surjective because both spaces have dimension $\dim_{\mathbb{C}}N$.

Since $(\rho_X|_{Q_X N})^{-1}$ is an isometry and $\rho_X$ is contractive,
\begin{equation} \|P_Xq\|_1\le\|q\|_1\qquad(q\in Q(X,D_X)). \nonumber \end{equation}
If $Q_X N\ne0$, then $P_Xv=v$ for $0\ne v\in Q_X N$, so $\|P_X\|=1$.
\end{proof}

\subsection{Covering constructions}
We now describe the covering constructions appearing in the main theorem and record the corresponding pullback formulas on tangent and cotangent spaces. We use the marked formulation of \cite{MR3664815}.

\begin{definition}\label{def:covering}
Let $h: S_X\to S_Y$ be a finite orientation-preserving topological branched cover, let $B\subset S_Y$ be the set of branch values, and let $D\subset S_X$ be the set of marked points. The marked point sets are compatible if
\begin{equation} \label{eq:compatibility} h^{-1}(B)=D\cup\operatorname{Ram}(h) \end{equation}
\end{definition}

For a marked Riemann surface $[Y,\varphi_Y]\in\mathcal{T}(S_Y,B)$, let $[X,\varphi_X]$ be the pullback marked Riemann surface by $h$, characterized by the requirement that $h_Y:=\varphi_Y\circ h\circ\varphi_X^{-1}:X\to Y$ is holomorphic. The map \begin{equation} \begin{aligned} f_h:\mathcal{T}(S_Y,B)     &\to \mathcal{T}(S_X,D),\\ [Y,\varphi_Y]     &\longmapsto [X,\varphi_X] \end{aligned}   \nonumber \end{equation}  is called the pullback map induced by $h$.

\begin{definition}
A totally geodesic submanifold $N\subset\mathcal{T}(S_X,D)$ is said to arise via a covering construction if there exist a marked surface $(S_Y,B)$ and a branched cover $h:(S_X,D)\to(S_Y,B)$ with compatible marked point sets such that \begin{equation} N=f_h\bigl(\mathcal{T}(S_Y,B)\bigr).   \nonumber \end{equation}
\end{definition}

\begin{lemma}\label{lem:localCompute}
Let $m=\deg h$. Under~(\ref{eq:compatibility}), $f_h$ is a holomorphic isometric embedding. If $X=f_h(Y)$ and $h_Y:X\to Y$ is the induced holomorphic branched cover, then $ (df_h)_Y([\mu])=[h_Y^*\mu],$ and the normalized pullback
\begin{equation} \frac{1}{m} h_Y^*:Q(Y,B)\to Q(X,D)  \nonumber \end{equation}
is an isometric embedding satisfying
\begin{equation} \label{eq:cotangent-right-inverse}    (df_h)_Y^* \circ\frac{1}{m} h_Y^*=\operatorname{id}_{Q(Y,B)}. \end{equation}
\end{lemma}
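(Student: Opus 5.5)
The plan is to verify the three assertions of Lemma~\ref{lem:localCompute} in turn: the derivative formula, the isometry of the normalized pullback, and the right-inverse identity~(\ref{eq:cotangent-right-inverse}). The isometric embedding property of $f_h$ will then follow by duality.

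\emph{Holomorphy and the derivative.} Since $h_Y:X\to Y$ is holomorphic, a quasiconformal deformation of $Y$ with Beltrami differential $\mu$ lifts to a deformation of $X$ with Beltrami differential $h_Y^*\mu$. Indeed, the new complex structure on $X$ is the pullback of the new structure on $Y$ through the local branched cover, and locally $h_Y$ is $z\mapsto z^k$. This shows that $f_h$ is well defined on Teichm\"uller space: trivial deformations (isotopies rel $B$) lift to isotopies rel $h^{-1}(B)\supset D$. It also gives $(df_h)_Y[\mu]=[h_Y^*\mu]$. Since $\mu\mapsto h_Y^*\mu$ is complex linear, $f_h$ is holomorphic.

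\emph{Pullback of differentials.} For $q\in Q(Y,B)$, the pullback $h_Y^*q$ is meromorphic on $X$. At a point where $h_Y$ has local degree $k$ and $q$ has order $a\ge -1$, the order of $h_Y^*q$ is $k(a+2)-2$. If $a\ge0$ this is $\ge0$. If $a=-1$ the order is $k-2\ge -1$, with a simple pole only when $k=1$. A pole of $q$ lies in $B$, so such a preimage lies in $h^{-1}(B)=D\cup\operatorname{Ram}(h)$; when $k=1$ it is unramified and hence in $D$. Thus $h_Y^*q\in Q(X,D)$, and this is exactly where compatibility~(\ref{eq:compatibility}) is used. Changing variables over the $m$ sheets (away from finitely many points) gives $\int_X|h_Y^*q|=m\int_Y|q|$, so $\frac1m h_Y^*$ is an isometric embedding.

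\emph{Right inverse and isometry of $f_h$.} Using the pairing, compute
\[
\langle[\mu],(df_h)_Y^*\tfrac1m h_Y^*q\rangle=\tfrac1m\int_X h_Y^*\mu\cdot h_Y^*q=\tfrac1m\cdot m\int_Y\mu q=\langle[\mu],q\rangle,
\]
where $h_Y^*\mu\, h_Y^*q=h_Y^*(\mu q)$ is a pulled-back $(1,1)$-form integrated over a degree-$m$ cover. This gives~(\ref{eq:cotangent-right-inverse}).

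For the metric statement, $(df_h)_Y^*$ is the restriction map, which is contractive for the dual norms; the identity above then shows it is surjective. For a tangent vector $v$, its norm is $\|v\|=\sup_{\|q\|_1\le1}|\langle v,q\rangle|$. Writing $\langle v,q\rangle=\langle df_h v,\tfrac1m h_Y^*q\rangle$ and using that $\tfrac1m h_Y^*q$ has norm $\|q\|_1$ gives $\|df_hv\|\ge\|v\|$. The reverse inequality holds because $|\langle df_hv,p\rangle|=|\langle v,(df_h)^*p\rangle|\le\|v\|\,\|(df_h)^*p\|\le\|v\|\,\|p\|_1$. Here the contractivity $\|(df_h)^*p\|_1\le\|p\|_1$ comes from the pushforward (trace) formula $(df_h)^*p=\frac{1}{m}\cdot m\,(h_Y)_*p$ up to normalization, which is contractive in $L^1$.

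Hence $f_h$ is infinitesimally isometric for the Finsler Teichm\"uller metric. Since it is holomorphic and the Kobayashi and Teichm\"uller metrics agree (Royden), $f_h$ is distance non-increasing. Global isometry then follows by lifting Teichm\"uller maps: the Teichm\"uller map of $Y$ with differential $q$ lifts to the Teichm\"uller map of $X$ with differential $h_Y^*q$, with the same dilatation. The main care point is this pole-order bookkeeping together with the trace computation, especially identifying $(df_h)^*$ with the normalized pushforward $(h_Y)_*$.
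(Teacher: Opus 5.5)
Your proposal is correct. Your local computations match the paper's: the order formula $k(a+2)-2$ combined with compatibility to get $h_Y^*Q(Y,B)\subset Q(X,D)$, the identity $\|h_Y^*q\|_1=m\|q\|_1$, and the change-of-variables pairing that yields~(\ref{eq:cotangent-right-inverse}).

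The difference lies in the first assertion. The paper does not prove that $f_h$ is a holomorphic isometric embedding; it cites \cite[Proposition~A.2]{MR4768147}. You prove it instead, in three steps:
\begin{enumerate}
\item Holomorphy comes from the complex-linear lift $\mu\mapsto h_Y^*\mu$. This implicitly uses that the Bers projection from the Beltrami ball is a holomorphic split submersion.
\item The infinitesimal isometry comes from the identification $(df_h)_Y^*p=(h_Y)_*p$, together with $L^1$-contractivity of the trace and the isometric right inverse $\frac1m h_Y^*$.
\item The global isometry comes from lifting Teichm\"uller maps.
\end{enumerate}

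Two small clean-ups are worth making. First, the identification in step 2 holds exactly, with no ``up to normalization.'' It follows directly from $\int_X h_Y^*\mu\cdot p=\int_Y\mu\,(h_Y)_*p$ for all $\mu\in L^\infty(Y)$, since $(h_Y)_*p\in L^1$. Second, in step 3 you should state explicitly that compatibility is used a second time. The lifted map is a Teichm\"uller map for $h_Y^*q\in Q(X,D)$, so by Teichm\"uller's uniqueness theorem it is extremal rel $D$. This is what gives $d_X\ge d_Y$. The lift itself already gives $d_X\le d_Y$, so the Royden/Kobayashi sentence is superfluous.

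As for trade-offs, the paper's citation is shorter. Your version is self-contained and shows exactly where~(\ref{eq:compatibility}) enters the metric statement. It also has a side benefit: $(df_h)_Y^*=(h_Y)_*$ immediately gives $P_X=\frac1m h_Y^*(h_Y)_*$ for covering constructions. That is precisely the trace formula the paper later derives for a general $N$ in Proposition~\ref{prop:trace-reconstruction}.
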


\begin{proof}
By \cite[Proposition~A.2]{MR4768147}, the compatibility condition
\begin{equation} h^{-1}(B)=D\cup\operatorname{Ram}(h)  \nonumber \end{equation}
implies that $f_h$ is a holomorphic isometric embedding.

We recall the local order formulas.  Suppose that $w=z^k$ is the local form of $h_Y$ at $x\in X$, and that $q=f(w)\,dw^2$ has order $a$ at $y=h_Y(x)$.  Then $h_Y^*q=f(z^k)k^2z^{2k-2}\,dz^2$, and hence
\begin{equation} \label{eq:local-order} \operatorname{ord}_x(h_Y^*q)=k(a+2)-2. \end{equation}
Thus the compatibility condition implies
\begin{equation} h_Y^*Q(Y,B)\subset Q(X,D).  \nonumber \end{equation}

If $[\mu]\in T_Y\mathcal{T}(S_Y,B)$ is represented by a Beltrami differential, then away from ramification points
\begin{equation} \label{eq:beltrami-pullback} h_Y^*\mu=(\mu\circ h_Y) \frac{\overline{h_Y'}}{h_Y'}, \qquad d(f_h)_Y([\mu])=[h_Y^*\mu]. \end{equation}
Since the ramification points form a finite set, they do not affect the $L^\infty$ class.  For $q\in Q(Y,B)$, change of variables therefore gives
\begin{equation} \label{eq:cover-pairing} \bigl\langle d(f_h)_Y([\mu]),h_Y^*q\bigr\rangle = m\langle[\mu],q\rangle, \end{equation}
where $m=\deg h$.

Similarly, $|h_Y^*q|=h_Y^*|q|$ on the ramification points, so $\|h_Y^*q\|_1=m\|q\|_1$. Hence
\begin{equation} \frac{1}{m} h_Y^*:Q(Y,B)\to Q(X,D)   \nonumber \end{equation}
is an isometric embedding. Using~(\ref{eq:cover-pairing}) and the definition of the dual map gives
\begin{equation} (df_h)_Y^* \left(\frac{1}{m} h_Y^*q\right)=q, \qquad q\in Q(Y,B).\nonumber \end{equation}
\end{proof}

\subsection{An $L^1$ projection lemma}

We next establish an $L^1$ projection lemma that turns the contractive property into an integral identity over preimages under the map determined by the range of the projection. Later, these preimages will become the fibers of the map reconstructed from $Q_X N$. In this subsection we identify $\mathbb{C}$ with $\mathbb{R}^2$.

We begin with a standard Fourier analysis proposition:

\begin{lemma}\label{Fourier}
Let
\begin{equation} K(z)= \begin{cases} \frac{\overline{z}}{|z|}, & z\neq 0,\\ 0, & z=0. \end{cases}\nonumber \end{equation}

If $\sigma$ is a finite complex Borel measure on $\mathbb{C}$ and
\begin{equation} (K*\sigma)(w)=\int_{\mathbb{C}} K(w-z)\,d\sigma(z)=0   \nonumber \end{equation}
for Lebesgue-almost every $w\in\mathbb{C}$, then $\sigma=0$.
\end{lemma}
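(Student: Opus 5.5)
We prove the lemma with the Fourier transform: convolution by $K$ becomes multiplication by a function that is nonzero almost everywhere. First observe that $K$ is bounded by $1$ and homogeneous of degree $0$. Since $\sigma$ is finite, $K*\sigma$ is a bounded measurable function. It is therefore a tempered distribution, and the identity $K*\sigma=0$ a.e.\ holds as an identity of distributions. The kernel $K$ is itself a tempered distribution. Its Fourier transform is homogeneous of degree $-2$ on $\mathbb{R}^2$, because the Fourier transform of a degree-$\alpha$ homogeneous distribution is homogeneous of degree $-2-\alpha$. Under rotations it transforms like $\overline{z}$, by the $\mathbb{Z}$-character $e^{-i\theta}$. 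Away from the origin we therefore expect
\begin{equation}
\widehat{K}(\xi)=c\,\frac{\overline{\xi}}{|\xi|^{3}},\qquad \xi\neq0, \nonumber
\end{equation}
for a constant $c$, and we have to check that $c\neq0$. One way is to write $\overline{z}/|z| = 2\partial_z|z|$. The Fourier transform of $|z|$ on $\mathbb{R}^2$ is a nonzero multiple of $|\xi|^{-3}$ away from $0$ (the Riesz potential formula, whose Gamma-factor constant is nonzero). Differentiation multiplies by a nonzero multiple of $\overline{\xi}$, so $c\neq0$. The remaining ambiguity of $\widehat K$ at the origin is a distribution supported at $\{0\}$.

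Next, $\widehat{\sigma}$ is a bounded continuous function, since $\sigma$ is a finite measure. The key step is to justify $\widehat{K*\sigma}=\widehat K\cdot\widehat\sigma$ on $\mathbb{R}^2\setminus\{0\}$. To avoid multiplying a distribution by a merely continuous function, we test against $\psi\in C_c^\infty(\mathbb{R}^2\setminus\{0\})$. Fubini gives
\begin{equation}
\langle K*\sigma,\check\psi\rangle=\int\langle K(\cdot-z),\check\psi\rangle\,d\sigma(z)=\int (K*\check\psi)(z)\,d\sigma(z). \nonumber
\end{equation}
This is legitimate because $K$ is bounded and $\check\psi$ is Schwartz. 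Now $\widehat{K*\check\psi}=\widehat K\,\psi=c\,\overline{\xi}|\xi|^{-3}\psi$ is a smooth compactly supported function, so $K*\check\psi$ is Schwartz. Using the pairing of a finite measure with a Schwartz function,
\begin{equation}
0=\int c\,\frac{\overline{\xi}}{|\xi|^{3}}\,\psi(\xi)\,\widehat\sigma(\pm\xi)\,d\xi \nonumber
\end{equation}
for all such $\psi$, with the sign set by the Fourier convention. Hence $\widehat\sigma(\xi)\,\overline{\xi}\,|\xi|^{-3}=0$ for every $\xi\neq0$. Since $c\neq0$, this gives $\widehat\sigma=0$ on $\mathbb{R}^2\setminus\{0\}$. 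Continuity then gives $\widehat\sigma\equiv0$, and Fourier uniqueness for finite measures yields $\sigma=0$.

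The main obstacle is computing $\widehat K$ away from the origin with a nonzero constant, together with the distributional bookkeeping in the Fubini step. If the Riesz-potential route is awkward, there is an alternative. Compute $\langle K,\check\psi\rangle$ in polar coordinates: integrating $e^{i r\langle\omega,\xi\rangle}$ against $e^{-i\theta}$ gives a Bessel function $J_1$. The radial integral $\int_0^\infty J_1(r|\xi|)\,r\,dr$, taken with Abel regularization, equals a nonzero multiple of $|\xi|^{-2}$. This confirms $\widehat K(\xi)=c\,\overline\xi|\xi|^{-3}$ with $c\neq0$.
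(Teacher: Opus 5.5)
Your proposal is correct and follows the paper's argument. Both identify $\widehat K$ away from the origin as a nonvanishing multiple of $\overline{\xi}/|\xi|^3$, pair $K*\sigma$ via Fubini with inverse Fourier transforms of $C_c^\infty(\mathbb{R}^2\setminus\{0\})$ functions to get $\widehat\sigma=0$ off the origin, and conclude by continuity and Fourier uniqueness. The only difference is how $\widehat K$ is computed: the paper differentiates $K$ to get $|x|^{-1}$ (the correct operator is $\partial_{x_1}+i\partial_{x_2}$, not $\partial_{x_1}-i\partial_{x_2}$ as written, a harmless slip), whereas you write $K=2\partial_z|z|$ and use the Riesz formula for $\widehat{|x|}$.
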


\begin{proof}
The Fourier transform of the Poisson kernel on $\mathbb{R}^2$~\cite{MR1970295}  gives that , in the sense of tempered distributions,
\begin{equation} \widehat{|x|^{-1}}(\xi)=|\xi|^{-1}.   \nonumber \end{equation}
Since
\begin{equation} (\partial_{x_1}-i\partial_{x_2})K=|x|^{-1},   \nonumber \end{equation}
we obtain
\begin{equation} \widehat{K}(\xi) = -\frac{i}{2\pi} \frac{\xi_1+i\xi_2}{|\xi|^3},\qquad \xi\ne0.   \nonumber \end{equation}
In particular, $\widehat{K}$ does not vanish away from the origin.

Now let $a\in C_c^\infty(\mathbb{R}^2\setminus\{0\})$ and set $\psi=\mathcal{F}^{-1}a$. Since $a\widehat{K}\in C_c^\infty(\mathbb{R}^2\setminus\{0\})$, one has $\psi*K\in\mathcal{S}(\mathbb{R}^2)$. By Fubini's theorem,
\begin{equation} 0 = \psi*(K*\sigma) = (\psi*K)*\sigma . \nonumber \end{equation}
Taking Fourier transforms yields
\begin{equation} a(\xi)\widehat{K}(\xi)\widehat{\sigma}(\xi)=0.   \nonumber \end{equation}

Since $a$ is arbitrary and $\widehat{K}$ is nonzero on $\mathbb{R}^2\setminus\{0\}$, it follows that
\begin{equation} \widehat{\sigma}(\xi)=0 \qquad (\xi\neq 0).   \nonumber \end{equation}

The Fourier transform of a finite measure is continuous, so $\widehat{\sigma}(0)=0$ as well. Fourier uniqueness for finite complex measures therefore gives $\sigma=0$.
\end{proof}

\begin{lemma}\label{lem:L1-projection}
Let $(\Omega,\nu)$ be a complete finite measure space, let $A\subset L^1(\nu)$ be a finite-dimensional complex subspace containing $1$, and let
\begin{equation} \Pi:A\to A  \nonumber \end{equation}
be a complex-linear contractive projection satisfying $\Pi 1=1$. Let $V=\operatorname{Ran}\Pi$, with basis $1,g_1,\ldots,g_r$, and define \begin{equation} G=(g_1,\ldots,g_r):\Omega\to\mathbb{C}^r. \nonumber \end{equation} Then, for every $f\in A$ and every Borel set $E\subset\mathbb{C}^r$, \begin{equation} \int_{G^{-1}(E)}(f-\Pi f)\,d\nu=0.  \nonumber \end{equation} In particular, if $G$ is injective almost everywhere with respect to $\nu$, then $\Pi=\mathrm{id}$.
\end{lemma}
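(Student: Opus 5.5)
The idea is to exploit contractivity along a one-parameter perturbation. This produces a first-order (Gateaux derivative) condition for the $L^1$ norm, which we then convert into the convolution identity of Lemma~\ref{Fourier}.

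Fix $f\in A$ and set $k=f-\Pi f$, so that $\Pi k=0$. For any $v\in V$ and any $t\in\mathbb{C}$ we have $\Pi(v+tk)=v$. Contractivity then gives
\begin{equation} \|v\|_1\le\|v+tk\|_1 \qquad\text{for all } t\in\mathbb{C}. \nonumber \end{equation}
The function $t\mapsto\|v+tk\|_1$ is convex and minimized at $t=0$. Its one-sided directional derivatives at $0$ are $\mathrm{Re}\bigl(s\int_{\{v\neq0\}}k\,\overline{v}/|v|\,d\nu\bigr)+|s|\int_{\{v=0\}}|k|\,d\nu$ in the direction $s$. Nonnegativity in every direction $s$ alone does not force the first term to vanish. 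However, if $\nu(\{v=0\})=0$, the norm is differentiable at $0$ and we get
\begin{equation} \int_\Omega k\,\frac{\overline{v}}{|v|}\,d\nu=0. \nonumber \end{equation}

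We apply this with $v=1\cdot w_0-\sum_j c_j g_j$, where $w_0\in\mathbb{C}$ and $c\in\mathbb{C}^r$. The exceptional set where $\nu(\{v=0\})>0$ is small in the following sense. The map $(w_0,c)\mapsto v$ is linear, and for fixed $c$ the sets $\{w_0=\sum_j c_jg_j\}$ are disjoint for distinct $w_0$. Since $\nu$ is finite, only countably many $w_0$ can give positive measure. Hence for every $c$ and Lebesgue-a.e.\ $w_0\in\mathbb{C}$,
\begin{equation} \int_\Omega k\,K\bigl(w_0-c\cdot G\bigr)\,d\nu=0, \nonumber \end{equation}
where $K(z)=\overline z/|z|$ up to conjugation conventions. (If needed, replace $K$ by $\overline{K}$; its Fourier transform is likewise nonvanishing off the origin.)

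Now fix $c$ and push forward the complex measure $k\,d\nu$ under the scalar map $c\cdot G:\Omega\to\mathbb{C}$. This gives a finite complex Borel measure $\sigma_c$ with $K*\sigma_c=0$ a.e., so Lemma~\ref{Fourier} yields $\sigma_c=0$. Thus $\int_\Omega k\,\phi(c\cdot G)\,d\nu=0$ for every bounded Borel $\phi$ on $\mathbb{C}\simeq\mathbb{R}^2$. Taking $\phi(z)=e^{i\,\mathrm{Re}(\overline{\zeta}z)}$, we see that every real linear functional of $G$ is covered. Indeed, as $c$ ranges over $\mathbb{C}^r$ and $\zeta$ over $\mathbb{C}$, the quantities $\mathrm{Re}(\overline{\zeta}\,c\cdot G)$ exhaust all real linear forms $\mathrm{Re}(\overline{\xi}\cdot G)$ on $\mathbb{C}^r\simeq\mathbb{R}^{2r}$. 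So the pushforward $G_*(k\,\nu)$ on $\mathbb{C}^r$ has vanishing Fourier transform. By Fourier uniqueness for finite measures on $\mathbb{R}^{2r}$ it is zero, which is exactly $\int_{G^{-1}(E)}k\,d\nu=0$ for all Borel $E$.

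For the final claim, suppose $G$ is injective off a null set. Then the $\sigma$-algebra $G^{-1}(\text{Borel})$ generates the full measure algebra up to null sets, at least after using completeness and the fact that the relevant functions are measurable. Hence $k=0$ a.e., and so $\Pi=\mathrm{id}$. I expect the main obstacle to be this last step: making precise that a.e.\ injectivity makes $k$ measurable with respect to $G^{-1}(\mathcal{B})$ modulo null sets. I would first try to argue that $k$ (and $A$) consists of functions that factor through $G$ on a conull set, writing $k=\tilde k\circ G$ with $\tilde k=k\circ G^{-1}$ on the image. I would then take $E=\{\mathrm{Re}\,\tilde k>0\}$ and similar sets. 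A secondary delicate point is the a.e.\ handling of the zero set of $v$, which the countability argument above is meant to resolve.
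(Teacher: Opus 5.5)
Your derivation of the integral identity is correct and is essentially the paper's proof. Both use the first variation of the $L^1$ norm at $v=w_0-c\cdot G$ for $w_0$ off the countably many atoms, then Lemma~\ref{Fourier} applied to the pushforward of $k\,d\nu$ under the scalar map $c\cdot G$, then Fourier uniqueness on $\mathbb{R}^{2r}$. Letting $t$ range over $\mathbb{C}$ just replaces the paper's real $t$ together with $\eta\mapsto i\eta$.

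The genuine gap is in the final clause, and your suspicion there is well founded. You claim that a.e.\ injectivity of $G$ makes $G^{-1}(\mathcal{B})$ generate the measure algebra modulo null sets. That is false for a general complete finite measure space, so $\tilde k=k\circ G^{-1}$ need not be Borel, and the clause cannot be proved without further hypotheses. For instance, take $W\subset[0,1]$ of inner Lebesgue measure $0$ and outer measure $1$, extend $\lambda$ to $\sigma(\mathcal{B}\cup\{W\})$ by $\nu(W\cap B)=\nu(B\setminus W)=\frac12\lambda(B)$, and complete. Put $A=\operatorname{span}\{1,x,\mathbf{1}_W\}$ and $\Pi(a+bx+c\mathbf{1}_W)=a+bx+\frac c2$. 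This $\Pi$ is the conditional expectation onto the Borel sets, hence contractive, and $G(x)=x$ is injective, yet $\Pi\ne\mathrm{id}$.

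The paper also states this clause without proof. It uses it only in Proposition~\ref{prop:marked-completion}, where $\Omega$ is a compact Riemann surface, $\nu$ is absolutely continuous with respect to area, and $G$ is holomorphic and injective on $\Omega_0=\Omega\setminus S_0$ with $S_0$ finite. In that setting your plan works once you supply the missing regularity. $\Omega_0$ is $\sigma$-compact, and $G$ restricted to each compact piece is a homeomorphism onto its closed image, so $G(B)$ is Borel for every Borel $B\subset\Omega_0$; this is the same device the paper uses in Proposition~\ref{prop:fiber-average}. Then $E=G(\{\operatorname{Re}k>0\}\cap\Omega_0)$ satisfies $G^{-1}(E)=\{\operatorname{Re}k>0\}$ up to a finite set. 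Hence $\int_{\{\operatorname{Re}k>0\}}\operatorname{Re}k\,d\nu=0$, so $\operatorname{Re}k\le0$ a.e. Repeating with $-k$ and $\pm ik$ gives $k=0$ a.e.
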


\begin{proof}
Fix $f\in A$ and put \begin{equation} \eta=f-\Pi f\in\ker\Pi.   \nonumber \end{equation}

We first prove the corresponding statement for the preimage of a single function $g\in V$.

The pushforward measure $g_*\nu$ has at most countably many atoms. Hence, outside a countable set of $w\in\mathbb{C}$, one has $w-g\neq 0$ $\nu$-almost everywhere. For such $w$ and every $t\in\mathbb{R}$, the contractive property of $\Pi$ gives
\begin{equation} \|w-g\|_1=  \|\Pi(w-g+t\eta)\|_1 \le  \|w-g+t\eta\|_1.\nonumber \end{equation}
Thus $t=0$ is a minimum. So we have
\begin{equation} \left.\frac{d}{dt}\right|_{t=0}|w-g+t\eta|=\operatorname{Re}\left(\frac{\overline{w-g}}{|w-g|}\,\eta\right)   \nonumber \end{equation}
almost everywhere. Since the difference quotients are bounded by $|\eta|$, dominated convergence gives
\begin{equation} \operatorname{Re} \int_\Omega \frac{\overline{w-g}}{|w-g|}\,\eta\,d\nu = 0.  \nonumber \end{equation}
Applying the same argument to $i\eta\in\ker\Pi$ yields
\begin{equation} \int_\Omega \frac{\overline{w-g}}{|w-g|}\,\eta\,d\nu = 0  \nonumber \end{equation}
for almost every $w\in\mathbb{C}$.

Define a finite complex Borel measure $\sigma_g$ on $\mathbb{C}$ by
\begin{equation} \sigma_g(E) = \int_{g^{-1}(E)}\eta\,d\nu .  \nonumber \end{equation}
Then
\begin{equation} (K*\sigma_g)(w)=\int_\Omega K(w-g)\eta\,d\nu = 0  \nonumber \end{equation}
for almost every $w\in\mathbb{C}$. By Lemma~\ref{Fourier}, $\sigma_g=0$. Hence
\begin{equation} \label{eq:one-dimensional-level-set} \int_{g^{-1}(E)}\eta\,d\nu=0 \end{equation}
for every Borel set $E\subset\mathbb{C}$ and every $g\in V$.

We now pass to  $g_1,\ldots,g_r$. Define a finite complex Borel measure $\lambda$ on $\mathbb{C}^r$ by
\begin{equation} \lambda(E) = \int_{G^{-1}(E)}\eta\,d\nu .  \nonumber \end{equation}
For $a=(a_1,\ldots,a_r)\in\mathbb{C}^r$, let \begin{equation} L_a(z_1,\ldots,z_r) = \sum_{j=1}^r a_jz_j.  \nonumber \end{equation}  Since $L_a\circ G = \sum_{j=1}^r a_jg_j \in V,$ equation (\ref{eq:one-dimensional-level-set}) implies
\begin{equation} (L_a)_*\lambda=0 \qquad\text{for every }a\in\mathbb{C}^r.  \nonumber \end{equation}

Identify $\mathbb{C}^r$ with $\mathbb{R}^{2r}$. Every real linear functional $\ell:\mathbb{R}^{2r}\to\mathbb{R}$ can be written as $\ell(z)=\operatorname{Re}L_a(z)$ for some $a\in\mathbb{C}^r$. Since $(L_a)_*\lambda=0$, we obtain
\begin{equation} \int_{\mathbb{C}^r}e^{-2\pi i\ell(z)}\,d\lambda(z)=0   \nonumber \end{equation}
for every real linear functional $\ell$. Thus the Fourier transform of $\lambda$ vanishes identically on $\mathbb{R}^{2r}$. Fourier uniqueness for finite complex measures gives $\lambda=0$, and therefore
\begin{equation} \int_{G^{-1}(E)}(f-\Pi f)\,d\nu=0  \nonumber \end{equation}
for every Borel set $E\subset\mathbb{C}^r$.
\end{proof}
\begin{remark}
Lemma~\ref{lem:L1-projection} can be regarded as a finite-dimensional subspace analogue of Douglas's theorem: a contractive projection $P$ on the full space $L^1(\nu)$ with $P1=1$ is a conditional expectation. Here $\Pi$ is defined only on a subspace $A\subset L^1(\nu)$, so Douglas's theorem does not apply directly.
\end{remark}

\section{Construction of the covering}\label{sec:construction}

Throughout this section $N\subset\mathcal{T}_{g,n}$ is totally geodesic and
\begin{equation} d=\dim_{\mathbb{C}}N\ge3. \nonumber \end{equation}

\subsection{Reconstructing the quotient surface}

Assume $\dim Q_X N\ge2$. Set $L_X=K_X^{\otimes2}(D_X)$, and let $F$ be the fixed part of the sections in $Q_X N$. Dividing by the canonical section $s_F$ gives sections of $L_X(-F)$ with no common zeros. Hence we can construct a holomorphic map
\begin{equation} \phi_X:X\to\mathbb{P}(Q_X N^\vee). \nonumber \end{equation}
Let $C_X=\phi_X(X)$, and let
\begin{equation} j_X:Y_X\to C_X \nonumber \end{equation}
be the normalization of $C_X$, which is a nondegenerate curve in $\mathbb{P}^{d-1}$.

\begin{proposition}\label{prop:ratio-quotient}
The map $\phi_X$ factors uniquely as
\begin{equation} X\xrightarrow{\ h_X\ }Y_X\xrightarrow{\ j_X\ }C_X\subset\mathbb{P}(Q_X N^\vee), \nonumber \end{equation}
where $h_X$ is a nonconstant holomorphic map. For every $0\ne q_0\in Q_X N$,
\begin{equation} \label{eq:ratio-characterization} h_X^*\mathbb{C}(Y_X)=\mathbb{C}\left(\frac{q}{q_0}:q\in Q_X N\right). \end{equation}
Moreover, setting $L_Y=j_X^*\mathcal{O}_{C_X}(1)$, we have
\begin{equation} \label{eq:descent-line-bundle} L_X(-F)\simeq h_X^*L_Y. \end{equation}
\end{proposition}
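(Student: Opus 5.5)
The argument is standard curve theory, applied to the map $\phi_X$. I will prove the three assertions in order: the factorization, the description of the function field, and the isomorphism of line bundles.

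\textbf{Factorization.} Since $\dim Q_X N\ge2$, the sections of $L_X(-F)$ obtained after dividing by $s_F$ are not all proportional, so $\phi_X$ is nonconstant. Its image $C_X=\phi_X(X)$ is therefore an irreducible projective curve, and it is nondegenerate because the sections are linearly independent. We have a nonconstant holomorphic map $\phi_X:X\to C_X$ from a smooth curve, and $j_X:Y_X\to C_X$ is the normalization. By the universal property of normalization, applied to dominant maps from normal varieties, $\phi_X$ lifts uniquely to a holomorphic $h_X:X\to Y_X$ with $j_X\circ h_X=\phi_X$. Concretely, $h_X$ is defined over the smooth locus of $C_X$ by $j_X^{-1}\circ\phi_X$. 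It extends over the finitely many preimages of singular points by the Riemann extension theorem, since $Y_X$ is compact. Uniqueness holds because $j_X$ is an isomorphism over a dense open set. Finally, $h_X$ is nonconstant because $\phi_X$ is.

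\textbf{Function fields.} The map $j_X$ is birational, so $\mathbb{C}(Y_X)=j_X^*\mathbb{C}(C_X)$. Fix $0\ne q_0\in Q_X N$ and choose the coordinate $x_0$ on $\mathbb{P}(Q_X N^\vee)$ corresponding to $q_0$. The function field $\mathbb{C}(C_X)$ is generated by the restrictions of the ratios $x_i/x_0$, since $C_X\not\subset\{x_0=0\}$; this uses $q_0\ne0$ and nondegeneracy. Under $\phi_X$, the ratio $x_i/x_0$ pulls back to
\begin{equation}
\frac{q_i/s_F}{q_0/s_F}=\frac{q_i}{q_0}.
\end{equation}
Since $\phi_X^*=h_X^*\circ j_X^*$, this gives (\ref{eq:ratio-characterization}) for the chosen $q_0$. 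Any other $q\in Q_X N$ is a linear combination of the $q_i$, so $q/q_0$ lies in the same field, and the statement holds for every $q\in Q_X N$.

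\textbf{Line bundles.} Base-point-freeness gives $L_X(-F)\simeq\phi_X^*\mathcal{O}(1)$, because $\phi_X$ is by definition the map associated to a base-point-free linear system of sections of $L_X(-F)$. Then
\begin{equation}
\phi_X^*\mathcal{O}(1)=h_X^*j_X^*\mathcal{O}_{C_X}(1)=h_X^*L_Y,
\end{equation}
which is (\ref{eq:descent-line-bundle}).

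\textbf{Main obstacle.} The only point needing care is the factorization: the lift through the normalization at singular points of $C_X$, and the fact that $\phi_X$ is genuinely a morphism after removing the fixed part $F$. For the latter I would check that the sections $q/s_F$ have no common zeros, by definition of $F$ as the common divisor of all $q\in Q_X N$.
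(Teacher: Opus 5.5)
Your proposal is correct and is essentially the paper's own argument: the universal property of normalization gives the factorization, birationality of $j_X$ together with $\phi_X^*(x_i/x_0)=q_i/q_0$ gives the function-field identity, and base-point-freeness of the divided sections gives $L_X(-F)\simeq\phi_X^*\mathcal{O}(1)=h_X^*L_Y$. One minor correction concerns your optional ``concrete'' construction of $h_X$: compactness of $Y_X$ does not by itself let $j_X^{-1}\circ\phi_X$ extend across the punctures (compare $e^{1/z}$ into $\mathbb{P}^1$); what works is that $j_X$ is finite, which forces the lift on a small connected punctured neighborhood into a single coordinate disc of $Y_X$, so Riemann extension applies --- though this is inessential, since you already invoke the universal property.
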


\begin{proof}
Since $\dim Q_X N\ge2$, the map $\phi_X$ is nonconstant. Its image $C_X$ is therefore an irreducible compact curve. The normalization $Y_X$ is a compact Riemann surface, and the universal property of normalization gives the unique factorization
\begin{equation} \phi_X=j_X\circ h_X. \nonumber \end{equation}
The map $h_X$ is nonconstant, hence surjective, and the normalization map $j_X$ is one-to-one away from a finite set.

Choose a basis $q_0,\ldots,q_{d-1}$ of $Q_X N$. On the affine chart where $q_0\ne0$, the coordinate functions of $\phi_X$ are
\begin{equation} \frac{q_1}{q_0},\ldots,\frac{q_{d-1}}{q_0}. \nonumber \end{equation}
Since $\phi_X=j_X\circ h_X$, there are meromorphic functions $f_i$ on $Y_X$ such that
\begin{equation} \frac{q_i}{q_0}=h_X^*f_i. \nonumber \end{equation}
The affine coordinate functions of $j_X$ generate $\mathbb{C}(Y_X)$ because $j_X$ is one-to-one outside a finite subset. Hence (\ref{eq:ratio-characterization}) holds.

Finally, the space of sections defining $\phi_X$ has no common zeros, so
\begin{equation} L_X(-F)\simeq\phi_X^*\mathcal{O}_{C_X}(1)=h_X^*j_X^*\mathcal{O}_{C_X}(1)=h_X^*L_Y. \nonumber \end{equation}
\end{proof}

We call $h_X:X\to Y_X$ the quotient map associated with $X$. So far, $Y_X$ and $h_X$ have been constructed pointwise. For the arguments in the next subsection, we need these maps to vary holomorphically with $X$, while retaining control of their topological covering data.

\begin{lemma}\label{lem:relative-quotient}
There exists a nonempty dense open subset $N^\circ\subset N$ such that the finite maps $\{h_X:X\to Y_X\mid X\in N^\circ\}$ form a holomorphic family of finite maps which is locally topologically trivial as a family of branched coverings. In particular, the degree and the ramification data are locally constant.

After further shrinking the neighborhood, the connected Galois closures of the maps $h_X$ form a holomorphic family with a fixed deck group $G$ and a fixed subgroup $G_X\le G$ corresponding to the intermediate cover $X$.
\end{lemma}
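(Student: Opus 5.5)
The plan is to realize the whole construction of Proposition~\ref{prop:ratio-quotient} in families over $N$, and then to use generic-smoothness arguments to obtain local topological triviality on a dense open set. By Benirschke--Serv\'an \cite{MR4768147}, the spaces $Q_X N$ are the fibers of a holomorphic subbundle $\mathcal{Q}\subset\pi_*\bigl(K_{\mathcal{X}/N}^{\otimes2}(\mathcal{D})\bigr)$ of rank $d$, where $\pi:\mathcal{X}\to N$ is the universal curve restricted to $N$ and $\mathcal{D}$ is its divisor of marked sections. Since $N$ is Stein, or at least after working locally on small balls $U\subset N$ where $\mathcal{Q}$ is trivial, we choose a holomorphic frame $q_0(X),\ldots,q_{d-1}(X)$. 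The fixed part $F_X$ has locally constant degree off a proper analytic subset: $\deg F_X$ is upper semicontinuous, because the base locus is cut out by the vanishing of all frame sections. On the open dense set where it is minimal, $F_X$ varies holomorphically as a relative divisor. This yields a holomorphic map $\Phi:\mathcal{X}|_U\to U\times\mathbb{P}^{d-1}$ over $U$, and the image $\mathcal{C}=\Phi(\mathcal{X}|_U)$ is an analytic family of curves by Remmert's proper mapping theorem, since $\pi$ is proper.

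The second step is to normalize in families. Take the normalization $\mathcal{Y}\to\mathcal{C}$ of the total space. Over a dense Zariski-open subset of $U$, the fibers of $\mathcal{Y}\to U$ are smooth and coincide with the normalizations $Y_X$ of the fibers $C_X$; this is the usual generic statement that normalization commutes with restriction to general fibers, which can be checked by taking the locus where the geometric genus of $C_X$ is maximal, i.e.\ where $\delta$-invariants are constant. By the universal property of normalization, $\Phi$ lifts to $H:\mathcal{X}\to\mathcal{Y}$, and for generic $X$ this restricts to $h_X$. Both $\mathcal{X}\to U$ and $\mathcal{Y}\to U$ are then proper holomorphic submersions, so Ehresmann's theorem trivializes each topologically.

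The third step is to make the branched-cover data locally constant. The ramification divisor $R\subset\mathcal{X}$ of $H$ relative to $U$ is an analytic subset finite over $U$, and so is its image $\mathcal{B}=H(R)$. Off a proper analytic subset of $U$, both are unramified covers of $U$, which means the number of ramification points, the number of branch values, and the local degrees are all constant. On this locus, the relative positions of $R$, $\mathcal{B}$ and $H$ can be straightened by a fiber-preserving isotopy extension argument. One first uses Ehresmann for the pair $(\mathcal{Y},\mathcal{B})$ to trivialize the punctured family $\mathcal{Y}\setminus\mathcal{B}$. The unbranched cover $\mathcal{X}\setminus H^{-1}(\mathcal{B})\to\mathcal{Y}\setminus\mathcal{B}$ is then determined by its monodromy representation, which is locally constant, and this gives local topological triviality as branched covers. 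Intersecting the finitely many dense open sets obtained above gives $N^\circ$, which is dense because each excluded set is a proper analytic subset of the connected manifold $N$.

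For the Galois closure, over a small ball in $N^\circ$ we fix a base point and the monodromy homomorphism $\pi_1(Y_X\setminus B_X)\to S_m$. Its image $G$ and the stabilizer $G_X$ of a sheet are constant under the topological trivialization. The Galois closure is the cover of $\mathcal{Y}\setminus\mathcal{B}$ associated with the kernel. It is a holomorphic unbranched cover of a family, so it is holomorphic, and it extends across $\mathcal{B}$ by relative Riemann existence, i.e.\ local normalization over the punctured discs. Its deck group acts fiberwise by $G$.

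I expect the main obstacle to be the second step, normalization in families. One must ensure that generic fibers of the normalized total space really are the normalizations $Y_X$, and that $H$ restricts to $h_X$, rather than to a family with singular or nonreduced fibers. If the geometric-genus semicontinuity argument proves delicate, I would instead use simultaneous resolution over the locus where the $\delta$-invariants of $C_X$ are constant.
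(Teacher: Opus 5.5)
Your proposal is correct and follows essentially the same route as the paper's proof in Appendix~\ref{app:relative-quotient-proof}. Both arguments build the family version of $\phi_X$ after removing the fixed part, normalize the image family generically, split the ramification and branch data into disjoint sections, and get topological triviality from Ehresmann plus covering-space theory. The one real difference is the Galois closure: you construct it directly as the induced holomorphic cover of $\mathcal{Y}\setminus\mathcal{B}$ attached to the kernel of the monodromy, whereas the paper pulls back the marked family along the topological Galois closure and then invokes Lemma~\ref{lem:fiberwise-holomorphic}.

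You should also run the splitting step on the marking sections $\mathcal{D}$ and their images $H(\mathcal{D})$, removing the coincidence loci, as the paper does. Lemma~\ref{lem:character-trivial} and Proposition~\ref{prop:marked-completion} rely on these sections being constant in the trivialization.
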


After removing a proper analytic subset, simultaneous normalization puts the $Y_X$ in a holomorphic family, and local triviality of branched coverings gives the lemma. The argument is straightforward but involves several technical details, so we defer the details to Appendix~\ref{app:relative-quotient-proof}.

\begin{remark}
For readers familiar with Hurwitz spaces, the deformation statement above is standard once the maps $h_X:X\to Y_X$ are known to vary holomorphically: locally, their topological covering type is constant, and the Galois closures vary in the same way. The only point requiring some care here is that the surfaces $Y_X$ are defined individually as normalizations. We use simultaneous normalization to put them in a holomorphic family.
\end{remark}

\subsection{Reconstructing the quadratic differentials}

We next identify the projection $P_X$ in terms of the fibers of $h_X$ and recover the quadratic differentials downstairs.

Fix $X\in N^\circ$, choose $0\ne q_0\in Q_X N$, and let
\begin{equation} h=h_X: X\to Y=Y_X \nonumber \end{equation}
be the quotient map.  The map $q\mapsto q/q_0$ identifies $Q(X,D_X)$ isometrically with the finite-dimensional subspace
\begin{equation} \left\{\frac{q}{q_0}:q\in Q(X,D_X)\right\}  \subset L^1(X,|q_0|).  \nonumber \end{equation}
By proposition~\ref{prop:canonical-projection}, under this identification, $P_X$ becomes the projection
\begin{equation} \Pi\left(\frac{q}{q_0}\right)=\frac{P_Xq}{q_0}. \nonumber \end{equation}
Its range is $\{q/q_0:q\in Q_X N\}$, it is contractive, and $\Pi1=1$.

\begin{proposition}\label{prop:fiber-average}
For every $q\in Q(X,D_X)$ and every Borel set $E\subset Y$,
\begin{equation} \label{eq:fiber-integral} \int_{h^{-1}(E)}\left(\frac{q}{q_0}-\frac{P_Xq}{q_0}\right)|q_0|=0. \end{equation}

\end{proposition}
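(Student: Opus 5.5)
The plan is to apply Lemma~\ref{lem:L1-projection} with $\Omega=X$, $\nu=|q_0|$ (completed), $A=\{q/q_0: q\in Q(X,D_X)\}$ and $\Pi$ as above. These hypotheses were checked just before the statement. By Proposition~\ref{prop:canonical-projection}, $\Pi$ is a complex-linear contractive projection with $\Pi 1=1$, and its range is $V=\{q/q_0:q\in Q_X N\}$.

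First I will fix a basis $q_0,q_1,\ldots,q_{d-1}$ of $Q_X N$ extending $q_0$, so that $1,g_1,\ldots,g_{d-1}$ with $g_i=q_i/q_0$ is a basis of $V$. Set $G=(g_1,\ldots,g_{d-1})$. The lemma then gives
\begin{equation} \int_{G^{-1}(E')}\left(\frac{q}{q_0}-\frac{P_Xq}{q_0}\right)|q_0|=0 \nonumber \end{equation}
for every Borel $E'\subset\mathbb{C}^{d-1}$. Here $G$ is defined off the finite zero set $Z$ of $q_0$, which is $\nu$-null. Note that $G$ is exactly the affine chart expression of $\phi_X$ on $X\setminus Z$. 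By Proposition~\ref{prop:ratio-quotient}, $G=\iota\circ j_X\circ h$ there, where $\iota$ denotes the affine coordinates on the chart $\{q_0\neq0\}$ of $\mathbb{P}(Q_XN^\vee)$.

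The key step is to convert preimages under $G$ into preimages under $h$. Let $\Sigma\subset Y$ be the finite set consisting of the points where $j_X$ fails to be injective, together with the points lying over the hyperplane $\{q_0=0\}$, i.e.\ $h(Z)$. Then $h^{-1}(\Sigma)$ is finite, hence $\nu$-null. On $Y\setminus\Sigma$, the map $\iota\circ j_X$ is a continuous injection. For a Borel $E\subset Y$, the set $E'=\iota\circ j_X(E\setminus\Sigma)$ is Borel in $\mathbb{C}^{d-1}$: it is the image of a Borel set under an injective continuous map on a $\sigma$-compact space (Lusin–Souslin), or more simply, $j_X$ restricted to $Y\setminus\Sigma$ is a homeomorphism onto a locally closed subset. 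Injectivity gives
\begin{equation} G^{-1}(E')\setminus Z = h^{-1}(E\setminus\Sigma)\setminus Z. \nonumber \end{equation}
This set differs from $h^{-1}(E)$ only by a subset of the finite set $h^{-1}(\Sigma)\cup Z$.

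Finally, the integrand $(q-P_Xq)/q_0\cdot|q_0|$ is integrable, since its absolute value is $|q-P_Xq|$. Finite sets therefore do not affect the integral, and \eqref{eq:fiber-integral} follows. The only real obstacle is the measurability/injectivity bookkeeping in the previous paragraph. I would handle it by noting that $j_X$ is a homeomorphism off finitely many points, so no delicate descriptive set theory is needed.
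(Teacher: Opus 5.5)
Your proposal is correct and follows the paper's proof. The paper likewise applies Lemma~\ref{lem:L1-projection} with the basis $1,q_1/q_0,\ldots,q_{d-1}/q_0$ of $\operatorname{Ran}\Pi$. It then uses that the ratio map $Y\to\mathbb{C}^{d-1}$ is injective off a finite set, so that preimages under $G$ and under $h$ agree up to $|q_0|$-null finite sets; your $\iota\circ j_X$ is the paper's $(f_1,\ldots,f_{d-1})$. Your remark that the restriction of $j_X$ off finitely many points is a homeomorphism onto a locally closed set also justifies the Borel measurability of $\iota\circ j_X(E\setminus\Sigma)$, a step the paper only asserts.
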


\begin{proof}
Choose a basis $q_0,q_1,\ldots,q_{d-1}$ of $Q_X N$ and write
\begin{equation} \frac{q_i}{q_0}=h^*f_i, \qquad f_i\in\mathbb{C}(Y). \nonumber \end{equation}
Since the functions $f_1,\ldots,f_{d-1}$ generate $\mathbb{C}(Y)$, there is a finite set $S\subset Y$ containing the poles of the $f_i$ and the finitely many points where the map fails to be an immersion or injective, such that
\begin{equation} \iota=(f_1,\ldots,f_{d-1}): Y\setminus S\to\mathbb{C}^{d-1} \nonumber \end{equation}
is holomorphic, injective, and immersive. Since $q_0$ has at most simple poles, finite subsets of $X$ have $|q_0|$-measure zero; in particular, $h^{-1}(S)$ has zero $|q_0|$-measure.

In Lemma~\ref{lem:L1-projection}, take the basis
\begin{equation} 1,h^*f_1,\ldots,h^*f_{d-1} \nonumber \end{equation}
of $\operatorname{Ran}\Pi$. For every $q\in Q(X,D_X)$, let $f=q/q_0$. Take $\iota(y)=0$ for $y\in S$. Lemma~\ref{lem:L1-projection} gives
\begin{equation} \label{eq:iota-integral} \int_{(\iota\circ h)^{-1}(E')}(f-\Pi f)\,|q_0|=0 \end{equation}
for every Borel set $E'\subset\mathbb{C}^{d-1}$. If $E\subset Y\setminus S$ is Borel, then $\iota(E)$ is Borel and
\begin{equation} (\iota\circ h)^{-1}(\iota(E))=h^{-1}(E) \nonumber \end{equation}
away from $h^{-1}(S)$. Since this set is zero $|q_0|$-measure, (\ref{eq:iota-integral}) gives
\begin{equation} \int_{h^{-1}(E)}(f-\Pi f)\,|q_0|=0. \nonumber \end{equation}
The same equality holds for every Borel $E\subset Y$ after removing $S$.
\end{proof}

Let $D\subset Y$ be a coordinate disc where $h$ is unramified and $q_0$ has neither zero nor pole. Write $h^{-1}(D)=X_1\sqcup\cdots\sqcup X_m,$ and, using the coordinate $y$ on every sheet,
\begin{equation} q_0|_{X_i}=\varphi_i(y)\,dy^2, \qquad q|_{X_i}=\psi_i(y)\,dy^2. \nonumber \end{equation}

Since $P_Xq\in Q_X N$, the ratio $(P_Xq)/q_0$ is constant on the fibers of $h$. Thus on $h^{-1}(D)$ there is a meromorphic function $u_q$ on $D$ with
\begin{equation} \frac{P_Xq}{q_0}=h^*u_q. \nonumber \end{equation}
The disc contains no zero or pole of $q_0$, so $\varphi_i$ is holomorphic and does not vanish on $D$. On $X_i$,
\begin{equation} \frac{q}{q_0}=\frac{\psi_i}{\varphi_i}, \qquad |q_0|=|\varphi_i(y)|\,|dy|^2. \nonumber \end{equation}
For a Borel set $E\subset D$, equation (\ref{eq:fiber-integral}) becomes
\begin{equation} \int_E\left[\sum_{i=1}^m|\varphi_i(y)|\frac{\psi_i(y)}{\varphi_i(y)} -u_q(y)\sum_{i=1}^m|\varphi_i(y)|\right]|dy|^2=0. \nonumber \end{equation}
Since the equality above holds for every $E$, we have
\begin{equation} \label{eq:fiber-average} u_q(y)=\frac{\sum_{i=1}^m|\varphi_i(y)|\,\psi_i(y)/\varphi_i(y)}{\sum_{j=1}^m|\varphi_j(y)|}. \end{equation}

\begin{lemma}\label{lem:sheet-character}
Let $X\in N^\circ$, and let $h:X\to Y$ be the quotient map, $r:Z\to Y$ be the Galois closure given by Lemma~\ref{lem:relative-quotient}. Denote by $p:Z\to X$ the intermediate quotient, and let $G$ be the deck group of $r$. There are $0\ne q_0\in Q_X N$ and a character
\begin{equation} \chi: G\to\mathbb{C}^\times \nonumber \end{equation}
with finite image such that
\begin{equation} \label{eq:character} g^*p^*q_0=\chi(g)p^*q_0\qquad(g\in G). \end{equation}
\end{lemma}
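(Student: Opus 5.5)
The plan is to show that, for a suitable nonzero $q_0\in Q_X N$, the restrictions of $q_0$ to the different sheets of $h$ over a small disc are \emph{constant} multiples of one another. Once that holds, the ratio $g^*p^*q_0/p^*q_0$ is a meromorphic function on the connected surface $Z$. It is locally constant on an open set, hence equal to a constant $\chi(g)$. Because $(g_1g_2)^*=g_2^*g_1^*$ and $G$ is abelian-target-free here (scalars commute), $\chi$ is a homomorphism $G\to\mathbb{C}^\times$. Its image is finite since $G$ is finite. So everything reduces to extracting rigidity of the sheets from the fiber-average formula (\ref{eq:fiber-average}).

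\textbf{Step 1: holomorphy of the weights.} Over the disc $D$, set
\begin{equation}
c_i(y)=\frac{|\varphi_i(y)|}{\varphi_i(y)\sum_j|\varphi_j(y)|},
\nonumber
\end{equation}
so that (\ref{eq:fiber-average}) reads $u_q=\sum_i c_i\psi_i$. The function $u_q$ is meromorphic, and on $D$ it is in fact holomorphic. The $\psi_i$ are holomorphic, so applying $\bar\partial$ gives
\begin{equation}
\sum_i(\bar\partial c_i)\,\psi_i=0
\nonumber
\end{equation}
for every $q\in Q(X,D_X)$. Suppose that at a generic $y$ the evaluation map
\begin{equation}
Q(X,D_X)\to\bigoplus_{x\in h^{-1}(y)}L_X|_x
\nonumber
\end{equation}
is surjective. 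Then the vectors $(\psi_i(y))_i$ span $\mathbb{C}^m$, which forces $\bar\partial c_i=0$, so every $c_i$ is holomorphic. Consequently $c_i/c_j=\dfrac{|\varphi_i|\,\varphi_j}{\varphi_i\,|\varphi_j|}$ is holomorphic of modulus one, hence constant. This means the holomorphic nonvanishing function $\varphi_i/\varphi_j$ has constant argument, so it is constant. That is the required sheetwise proportionality, and it then holds for every $q_0$.

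\textbf{Step 2: the surjectivity, which is where $d\ge3$ enters.} Let $E=h^{-1}(y)$ be a generic reduced fiber with $m=\deg h$ points. Surjectivity fails exactly when $H^1\bigl(K_X^{2}(D_X)(-E)\bigr)\ne0$. By Serre duality this happens iff $H^0\bigl(K_X^{-1}(-D_X)(E)\bigr)\ne0$, which requires $m\ge 2g-2+n$.

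For the degree estimate, (\ref{eq:descent-line-bundle}) gives
\begin{equation}
4g-4+n\ \ge\ \deg L_X(-F)=m\deg C_X.
\nonumber
\end{equation}
A nondegenerate curve in $\mathbb{P}^{d-1}$ has degree at least $d-1\ge2$. Hence $m\le 2g-2+n/2$. This is strictly less than $2g-2+n$ unless $n=0$, $\deg C_X=2$ and $F=0$. In that case $d=3$, $C_X$ is a conic, $m=2g-2$, and the fiber $E$ would have to be a canonical divisor. The low-genus cases $g\le1$ are handled the same way with the obvious modifications.

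\textbf{Step 3: the exceptional conic case, the expected main obstacle.} Here $h$ factors through a map to $\mathbb{P}^1$ whose fibers may be canonical divisors, so only codimension-one control on the values is available. I would first try to show that generic fibers are not canonical. If every fiber were canonical, the fibers would all be linearly equivalent to $K_X$, and the map $h$ would be defined by a pencil inside $|K_X|$. That would make $Q_X N$ the symmetric square of a pencil of abelian differentials, and I would try to rule this out or treat it directly. Failing that, I would rerun Step 1 with the $\psi_i$ restricted to a codimension-one subspace $W\subset\mathbb{C}^m$. Then $(\bar\partial c_i)_i$ is forced to be proportional to a fixed normal vector of $W$. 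I would then choose $q_0$ within the pencil, using the freedom in the choice of $q_0$ that the statement allows, so that this proportionality combined with $|c_i|$ being real still forces $c_i/c_j$ to be constant. This last step is the least certain.
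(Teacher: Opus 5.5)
Your reduction to sheetwise proportionality, your Step~1 and your Step~2 agree with the paper's surjective case and with its degree and Serre-duality analysis. Your $c_i$ are the paper's $\alpha_i$, and your $\bar\partial$ argument is equivalent to the paper's inversion of the matrix $\Psi(y)$.

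The genuine gap is Step~3. In the exceptional case ($n=0$, $d=3$, $F=0$, $\deg C_X=2$, $m=2g_X-2$), $C_X$ is a smooth conic. So $Y\simeq\mathbb{P}^1$ and all fibers of $h$ are linearly equivalent. If one generic fiber is canonical, all of them are, and there is no better fiber to choose. Nor can the case be excluded. For $N=\mathcal{T}_2$ itself, $h$ is the hyperelliptic double cover, $Q_XN=\operatorname{Sym}^2H^0(K_X)$, and the evaluation map has rank $m-1=1$ on generic fibers. So the case must be treated directly. Your codimension-one sketch stops exactly where the work is: knowing only that $(\bar\partial c_i)_i$ annihilates some hyperplane $W$ does not by itself make $c_i/c_j$ constant.

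The paper handles it as follows. Write $K_X\simeq h^*\mathcal{O}_{\mathbb{P}^1}(1)$ and $h=\omega_1/\omega_0$ with $\omega_0,\omega_1\in H^0(X,K_X)$, and on the sheets $\omega_0|_{X_i}=a_i(y)\,dy$.

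\begin{enumerate}
\item $W$ is identified explicitly. The residue theorem for the one-form $q/((h-y)\omega_0)$, whose only poles are simple poles on $h^{-1}(y)$, gives $\sum_i\psi_i(y)/a_i(y)=0$. So $(\bar\partial c_i)_i$ is a multiple of $(1/a_i)_i$.
\item One takes $q_0=\omega_0^2$, so that $c_i=\overline{a_i}/\bigl(a_i\sum_\ell|a_\ell|^2\bigr)$. Your proportionality then says exactly that $(\overline{a_i}-\overline{a_j})/\sum_\ell|a_\ell|^2$ is holomorphic for all $i,j$. This forces the ratios $(a_i-a_j)/(a_{i_0}-a_{j_0})$ to be constant.
\item This is the ingredient missing from your plan. $Y\simeq\mathbb{P}^1$ has no holomorphic one-forms, so the trace $h_*\omega_0$ vanishes, that is, $\sum_i a_i=0$.
\end{enumerate}

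Without the trace identity you only get $a_i=a_{j_0}+\kappa_i(a_{i_0}-a_{j_0})$ with constants $\kappa_i$, which is not sheetwise proportionality. With it, every $a_i$ is a constant multiple of $a_{i_0}-a_{j_0}$. Hence $p^*\omega_0$ is itself a $G$-eigenvector with some character $\chi_0$. The lemma then holds for $q_0=\omega_0^2$ with $\chi=\chi_0^2$.
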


\begin{proof}
Let $g_X$ be the genus of $X$, let $n=\deg D_X$, $m=\deg h_X$, and denote
\begin{equation} L_X=K_X^{\otimes2}(D_X). \nonumber \end{equation}
Let $F$ be the fixed part of $Q_X N$. Since $C_X$ is a nondegenerate curve in $\mathbb{P}^{d-1}$, we have
\begin{equation} \deg L_X(-F)=\deg(h_X^*L_Y)=m\deg L_Y=m\deg C_X\ge m(d-1). \nonumber \end{equation}

So we have
\begin{equation} \label{eq:degree-estimate} m(d-1)\le m\deg L_Y=\deg L_X(-F)\le4g_X-4+n. \end{equation}
Since $d-1\ge2$, we have
\begin{equation} \label{eq:m-bound} m\le2g_X-2+\frac{n}{2}. \end{equation}
Let $y\in Y$ be a regular value. Consider the short exact sequence
\begin{equation} \label{eq:restriction-sequence} 0 \to L_X(-h^{-1}(y)) \to L_X \to L_X|_{h^{-1}(y)} \to 0. \end{equation}
It gives the long exact sequence
\begin{equation} \label{eq:LES} \begin{aligned} 0 &\to H^0\bigl(X,L_X(-h^{-1}(y))\bigr) \to H^0(X,L_X) \\ &\to H^0\bigl(h^{-1}(y),L_X|_{h^{-1}(y)}\bigr) \to H^1\bigl(X,L_X(-h^{-1}(y))\bigr) \\ &\to H^1(X,L_X) \to \cdots. \end{aligned} \end{equation}
Hence, if
\begin{equation} H^1\bigl(X,L_X(-h^{-1}(y))\bigr)=0, \nonumber \end{equation}
then the connecting homomorphism vanishes, and therefore the evaluation map
\begin{equation} H^0(X,L_X) \to H^0\bigl(h^{-1}(y),L_X|_{h^{-1}(y)}\bigr) = L_X|_{h^{-1}(y)} \nonumber \end{equation}
is surjective.

By Serre duality, the obstruction is dual to
\begin{equation} H^0\bigl(X,K_X^{-1}(-D_X+h^{-1}(y))\bigr), \nonumber \end{equation}
By~\ref{eq:m-bound}, we have that 
\begin{equation}\deg\bigl(K_X^{-1}(-D_X+h^{-1}(y))\bigr)=m-(2g_X-2+n)\le -\frac{n}{2}  \nonumber \end{equation} 

If $n>0$, $\deg\bigl(K_X^{-1}(-D_X+h^{-1}(y))\bigr)$ is negative. If $n=0$, it is at most zero, and a degree-zero line bundle has a nonzero section only when it is trivial. Thus the evaluation map is not surjective only when
\begin{equation} \label{eq:exceptional-case} n=0, \qquad m=2g_X-2, \qquad \mathcal{O}_X(h^{-1}(y))\simeq K_X. \end{equation}

\textbf{The surjective case.} Suppose the evaluation map is surjective. Choose any $0\ne q_0\in Q_X N$. Let $D\subset Y$ be a coordinate disc where $h$ is unramified and $q_0$ has neither zero nor pole. Write $h^{-1}(D)=X_1\sqcup\cdots\sqcup X_m,$ and, using the coordinate $y$ on every sheet, write
\begin{equation} q_0|_{X_i}=\varphi_i(y)\,dy^2, \qquad q|_{X_i}=\psi_i(y)\,dy^2. \nonumber \end{equation}
The $\varphi_i$ are holomorphic and nonzero. Proposition~\ref{prop:fiber-average} gives
\begin{equation} \label{eq:weighted-average} \frac{P_Xq}{q_0}=h^*u_q, \qquad u_q(y)=\sum_{i=1}^m\alpha_i(y)\psi_i(y), \end{equation}
where
\begin{equation} \label{eq:alpha-weights} \alpha_i(y)=\frac{\overline{\varphi_i(y)}/|\varphi_i(y)|}{\sum_{j=1}^m|\varphi_j(y)|}. \end{equation}

Fix $y_0\in D$. Since the evaluation map is surjective, there are sections $q_1,\ldots,q_m\in H^0(X,L_X)$ whose values at $h^{-1}(y_0)$ form a basis of $L_X|_{h^{-1}(y_0)}$. Write
\begin{equation} q_j|_{X_i}=\psi_{ji}(y)\,dy^2, \qquad \Psi(y)=(\psi_{ji}(y))_{j,i}. \nonumber \end{equation}
Replacing $D$ by a smaller disc, assume that $\Psi(y)$ is invertible throughout $D$. For each $j$, $u_{q_j}$ in (\ref{eq:weighted-average}) is holomorphic on $D$ since $P_X q_j\in Q_X N$ and  $D$ contains no zero or pole of $q_0$ and no image of a marked point. Equation~(\ref{eq:alpha-weights}) gives that
\begin{equation} (\alpha_1,\ldots,\alpha_m)^T =\Psi(y)^{-1}(u_{q_1},\ldots,u_{q_m})^T, \nonumber \end{equation}
Thus every $\alpha_i$ is holomorphic.

For each $i,j$, the quotient $\alpha_i/\alpha_j$ is holomorphic and has absolute value one. It is therefore constant by the open mapping theorem. By (\ref{eq:alpha-weights}),
\begin{equation} \frac{\alpha_i}{\alpha_j} =\frac{\overline{\varphi_i/\varphi_j}}{|\varphi_i/\varphi_j|}. \nonumber \end{equation}
Since $\alpha_i/\alpha_j$ is constant, the holomorphic function $\varphi_i/\varphi_j$ has constant argument. By the open mapping theorem, $\varphi_i/\varphi_j$ is therefore constant.

For $g\in G$, let $\widetilde{D}$ be any component of $r^{-1}(D)$. Suppose that $p(\widetilde{D})=X_i$ and $p(g(\widetilde{D}))=X_j$. Then $p^*q_0=\varphi_i(y)\,dy^2$ on $\widetilde{D}$ and $p^*q_0=\varphi_j(y)\,dy^2$ on $g(\widetilde{D})$. Since $r\circ g=r$, we have $y\circ g=y$, and hence on $\widetilde{D}$
\begin{equation} g^*p^*q_0=g^*\bigl(\varphi_j(y)\,dy^2\bigr)=\varphi_j(y\circ g)\,d(y\circ g)^2=\varphi_j(y)\,dy^2. \nonumber \end{equation}
Therefore
\begin{equation} \frac{g^*p^*q_0}{p^*q_0}=\frac{\varphi_j(y)}{\varphi_i(y)}, \nonumber \end{equation}
which is constant. So the meromorphic function
\begin{equation} \frac{g^*p^*q_0}{p^*q_0} \nonumber \end{equation}
is constant on each component of $r^{-1}(D)$. Since $Z$ is connected, analytic continuation makes it a global constant, which we denote by $\chi(g)$. For $g_1,g_2\in G$,
\begin{equation} (g_1g_2)^*p^*q_0=g_2^*(g_1^*p^*q_0)=\chi(g_1)\chi(g_2)p^*q_0. \nonumber \end{equation}
Thus $\chi(g_1g_2)=\chi(g_1)\chi(g_2)$, and $\chi: G\to\mathbb{C}^\times$ is a character with finite image.

\smallskip
\textbf{The exceptional case.} If the evaluation map is not surjective, (\ref{eq:exceptional-case}) holds. Since $m>0$, (\ref{eq:degree-estimate}) gives
\begin{equation} d-1\le\deg L_Y\le2. \nonumber \end{equation}
The inequalities force $d=3$ and $\deg L_Y=2$. Equality then holds throughout (\ref{eq:degree-estimate}), so $F=0$. The complete linear system $|L_Y|$ maps $Y$ one-to-one outside a finite subset onto a nondegenerate plane conic, so
\begin{equation} Y\simeq\mathbb{P}^1, \qquad L_Y\simeq\mathcal{O}_{\mathbb{P}^1}(2). \nonumber \end{equation}
Since $\mathcal{O}_X(h^{-1}(y))\simeq K_X$ and $h^{-1}(y)$ is a fiber,
\begin{equation} K_X\simeq h^*\mathcal{O}_{\mathbb{P}^1}(1). \nonumber \end{equation}
The pullback of $H^0(\mathbb{P}^1,\mathcal{O}(1))$ gives a base-point-free two-dimensional subspace \begin{equation} V\subset H^0(X,K_X).  \nonumber \end{equation} Choose a basis $\omega_0,\omega_1$ of $V$ so that $h=\omega_1/\omega_0$. Then
\begin{equation} Q_X N=h^*H^0(\mathbb{P}^1,\mathcal{O}(2))=\operatorname{Sym}^2\langle\omega_0,\omega_1\rangle. \nonumber \end{equation}
Choose $q_0=\omega_0^2$. Let $D\subset Y$ be a coordinate disc where $h$ is unramified and $q_0$ has neither zero nor pole. Write $h^{-1}(D)=X_1\sqcup\cdots\sqcup X_m,$ and, using the coordinate $y$ on every sheet,
\begin{equation} \omega_0|_{X_i}=a_i(y)\,dy, \qquad q|_{X_i}=\psi_i(y)\,dy^2. \nonumber \end{equation}

By the long exact sequence~(\ref{eq:LES}), the kernel of the evaluation map is
\begin{equation} H^0\bigl(X,K_X^{\otimes2}(-h^{-1}(y))\bigr)\simeq H^0(X,K_X). \nonumber \end{equation}
Therefore the rank of the evaluation map is
\begin{equation} (3g_X-3)-g_X=2g_X-3=m-1. \nonumber \end{equation}

Now consider the meromorphic one form $\theta_y=\frac{q}{(h-y)\omega_0}$. It has simple poles on the fiber $h^{-1}(y)$ and no other poles. By the residue theorem,
\begin{equation} \label{eq:residue} 0=\sum_{x\in h^{-1}(y)}\operatorname{Res}_x(\theta_y)=\sum_{i=1}^{m}\frac{\psi_i(y)}{a_i(y)}. \end{equation}
Thus
\begin{equation} \psi_m(y)=-a_m(y)\sum_{i=1}^{m-1}\frac{\psi_i(y)}{a_i(y)}   \nonumber \end{equation}
Plugging into~(\ref{eq:weighted-average}), we obtain
\begin{equation} \label{eq:except-weights} u_q(y)=\sum_{i=1}^{m-1}\bigl(\alpha_i(y)-\frac{a_m}{a_i}\alpha_m\bigr)\psi_i(y). \end{equation}

As in the surjective case, there are sections $q_1,\ldots,q_{m-1}\in H^0(X,L_X)$ whose values at $h^{-1}(y_0)$ form a basis of the image of the evaluation map. It follows that $\bigl(\alpha_i(y)-\frac{a_m}{a_i}\alpha_m \bigr)$ is holomorphic.

For $q_0=\omega_0^2$, formula (\ref{eq:alpha-weights}) becomes
\begin{equation} \label{eq:exceptional-alpha} \alpha_i=\frac{\overline{a_i}}{a_i\sum_{j=1}^m|a_j|^2}. \end{equation}
Hence
\begin{equation} \bigl(\alpha_i(y)-\frac{a_m}{a_i}\alpha_m \bigr)=\frac{\overline{a_i}-\overline{a_m}}{\sum_{\ell=1}^m|a_\ell|^2}\nonumber \end{equation}
is holomorphic. Moreover
\begin{equation} \frac{\overline{a_i}-\overline{a_j}}{\sum_{\ell=1}^m|a_\ell|^2}\nonumber \end{equation}
is holomorphic. The pushforward of $\omega_0$ along $h$, defined on the unramified set by summing over the local inverse branches and extended to all of $Y$, gives a holomorphic one-form on $Y$. Since $Y\simeq\mathbb{P}^1$, $h_*\omega_0=0$. Thus on $D$
\begin{equation} \label{eq:sum-ai-zero} \sum_{i=1}^m a_i=0. \end{equation}
The $a_i$ are nonzero, so they are not all the same. Choose $i_0,j_0$ with $a_{i_0}-a_{j_0}$ nonzero and shrink $D$ so that it never vanishes. Thus
\begin{equation} \frac{\overline{a_{i_0}}-\overline{a_{j_0}}}{\sum_{\ell=1}^m|a_\ell|^2}  \nonumber \end{equation}
is holomorphic and nonzero, which implies that the quotient
\begin{equation} \frac{\overline{a_{i}}-\overline{a_{j}}}{\overline{a_{i_0}}-\overline{a_{j_0}}}   \nonumber \end{equation}
is holomorphic. But on the other hand, the quotient
\begin{equation} \frac{a_i-a_j}{a_{i_0}-a_{j_0}} \nonumber \end{equation}
is also holomorphic.

Hence it is constant. Write
\begin{equation} a_i=(a_{i_0}-a_{j_0})c_i+a_{j_0}, \qquad c_i=\frac{a_i-a_{j_0}}{a_{i_0}-a_{j_0}}\in\mathbb{C}. \nonumber \end{equation}
The $c_i$ are constant. Summing over $i$ we get that
\begin{equation} 0= \sum_{i=1}^{m} a_i=(a_{i_0}-a_{j_0})\sum_{i=1}^{m}c_i+ma_{j_0},  \nonumber \end{equation}
Thus $a_{j_0}$ is a constant multiple of $a_{i_0}-a_{j_0}$. Absorbing that constant into the $c_i$ gives
\begin{equation} \label{eq:ai-proportional} a_i=(a_{i_0}-a_{j_0})c_i, \qquad c_i\in\mathbb{C}^\times. \end{equation}

For $g\in G$, the ratio $g^*p^*\omega_0/p^*\omega_0$ is constant on every component of $r^{-1}(D)$ by (\ref{eq:ai-proportional}), hence constant on connected $Z$. Denote this constant by $\chi_0(g)$. Pullback by a product $g_1g_2$ shows that $\chi_0: G\to\mathbb{C}^\times$ is a character. Its image is finite because $G$ is finite. Squaring gives (\ref{eq:character}) for $q_0=\omega_0^2$ with $\chi=\chi_0^2$.
\end{proof}

\begin{lemma}\label{lem:character-trivial}
The differential $q_0$ in Lemma~\ref{lem:sheet-character} is an ordinary pullback: there is a meromorphic quadratic differential $\eta$ on $Y$ such that
\begin{equation} q_0=h^*\eta. \nonumber \end{equation}
\end{lemma}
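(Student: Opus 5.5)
Since $r=h\circ p$ is Galois with group $G$ and $X=Z/G_X$, a quadratic differential on $Z$ that is $G$-invariant descends to $Y=Z/G$. So it suffices to show that $\chi$ is trivial. If $\chi\equiv1$, then $p^*q_0$ is $G$-invariant and equals $r^*\eta$ for a meromorphic quadratic differential $\eta$ on $Y$. Since $r^*\eta=p^*h^*\eta$ and $p^*$ is injective on meromorphic differentials, this gives $q_0=h^*\eta$. Note that $\chi|_{G_X}$ is automatically trivial, because $p^*q_0$ is a pullback from $X$.

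To prove $\chi\equiv1$, I would follow the paper's outline and deform along the Teichm\"uller disc of $q_0$. Let $X_t$, for $t$ in the unit disc, be the Teichm\"uller disc generated by $q_0$, with $X_0=X$. It lies in $N$ because $q_0\in Q_XN$, and for small $t$ it stays in $N^\circ$. By Lemma~\ref{lem:relative-quotient}, the Galois closures $Z_t\to Y_t$ form a holomorphic family with constant deck group $G$, so we may transport the $G$-action to $\mathcal{T}(S_Z)$.

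The disc of $q_0$ lifts by the covering construction $f_p$ to the Teichm\"uller disc of $p^*q_0$ in $\mathcal{T}(S_Z)$; this uses Lemma~\ref{lem:localCompute}, with tangent vector $[p^*\mu_{q_0}]=[\mu_{p^*q_0}]$. Each $Z_t$ is Galois over $Y_t$ with the same topological group $G$, so each $g\in G$ acts biholomorphically on every $Z_t$. Hence $g$ fixes the lifted disc pointwise, and $dg$ fixes its tangent vector: $g\cdot[\mu_{p^*q_0}]=[\mu_{p^*q_0}]$. Pairing with $p^*q_0$ and using (\ref{eq:pairing-equivariant}) and (\ref{eq:q-pairing}) gives
\begin{equation}
\|p^*q_0\|_1=\langle g\cdot[\mu_{p^*q_0}],p^*q_0\rangle=\langle[\mu_{p^*q_0}],g^*p^*q_0\rangle=\chi(g)\|p^*q_0\|_1 .
\nonumber
\end{equation}
Therefore $\chi(g)=1$.

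The main obstacle is justifying that the lifted disc coincides with the family of Galois closures given by Lemma~\ref{lem:relative-quotient}, with $g$ acting holomorphically throughout. Concretely, we must show that $f_p(X_t)=Z_t$ as marked surfaces and that the marking-preserving deck transformation $g$ fixes this point of $\mathcal{T}(S_Z)$. The argument would use continuity of the topological data: the family $Z_t$ is locally topologically trivial with fixed $G$ and $G_X$, so the induced marked surfaces agree with the pullback by the fixed topological cover $S_Z\to S_X$. The action of $g$ on $\mathcal{T}(S_Z)$ fixes any point where $g$ is realized by a biholomorphism isotopic to the marked $g$, and on $Z_t$ this is exactly the deck action. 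One should also check that the pairing identity (\ref{eq:q-pairing}) applies to $p^*q_0$. Its poles are at worst simple at points of $p^{-1}(D_X)$ and ramification points; we may take the marked set on $Z$ to be the preimage of these points together with the ramification points, which only enlarges $Q$.
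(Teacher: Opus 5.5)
Your outline matches the paper's: deform along the Teichm\"uller disc of $q_0$, lift to $Z$, use that $G$ fixes the lifted disc, pair with $p^*q_0$, then descend. The descent step is fine. The key step, however, is not justified as written. Since $q\mapsto\mu_q$ is natural under biholomorphisms and $|\chi(g)|=1$, one has identically
\[
g\cdot[\mu_{p^*q_0}]=[\mu_{(g^{-1})^*p^*q_0}]=\chi(g)\,[\mu_{p^*q_0}].
\]
So your assertion $g\cdot[\mu_{p^*q_0}]=[\mu_{p^*q_0}]$ is literally the statement $\chi(g)=1$, and it rests on identifying two discs that are a priori different.

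To apply $f_p$ you must first lift $\gamma$ from $\mathcal{T}(S_X,D_X)$ to $\mathcal{T}(S_X,E_X)$, where $E_X$ contains the branch values of $p$. The lift that $G$ fixes is $\widehat\gamma$, in which the extra points of $X_t$ are $h_t^{-1}(\operatorname{Br}(r_t)\cup h_t(D_{X_t}))$, as supplied by Lemma~\ref{lem:relative-quotient}. For this marking, $f_p(\widehat\gamma(t))$ is the Galois closure $Z_t$ with its deck group acting biholomorphically, and that is all your topological-triviality argument proves. The Teichm\"uller disc of $p^*q_0$ is instead $f_p$ of the lift in which the extra points are dragged along by the Teichm\"uller maps of $q_0$. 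About $\widehat\gamma$ you only know that its tangent $\widehat v_X$ maps to $[\mu_{q_0}]$ under the forgetful map. So $\widehat v_X$ may differ from $[\mu_{q_0}]\in T_X\mathcal{T}(S_X,E_X)$ by a vertical vector, and by the display above, ruling this out is at least as strong as the lemma itself.

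The missing idea is that you never need this identification. Put $v_Z=df_p(\widehat v_X)$, so that $g\cdot v_Z=v_Z$. By (\ref{eq:cover-pairing}), $\langle v_Z,p^*q_0\rangle=(\deg p)\langle\widehat v_X,q_0\rangle$. Because $q_0\in Q(X,D_X)$ and the cotangent map of the forgetful map $\mathcal{T}(S_X,E_X)\to\mathcal{T}(S_X,D_X)$ is the inclusion $Q(X,D_X)\hookrightarrow Q(X,E_X)$, the vertical ambiguity pairs to zero with $q_0$. Hence $\langle\widehat v_X,q_0\rangle=\langle[\mu_{q_0}],q_0\rangle=\|q_0\|_1>0$. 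Then $\langle v_Z,p^*q_0\rangle=\langle v_Z,g^*p^*q_0\rangle=\chi(g)\langle v_Z,p^*q_0\rangle$ gives $\chi(g)=1$; this is the paper's argument.

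Also take the marked set on $Z$ to be $E_Z=r^{-1}(\operatorname{Br}(r)\cup h(D_X))$. This set is $G$-invariant, as needed for $G$ to act on $\mathcal{T}(S_Z,E_Z)$, whereas a set like $p^{-1}(D_X)\cup\operatorname{Ram}(p)$ need not be.
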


\begin{proof}
Restrict the family in Lemma~\ref{lem:relative-quotient} to a small disc in the Teichm\"uller disc generated by $q_0$, centered at $X$ and contained in $N^\circ$. Denote this restricted Teichm\"uller disc by
\begin{equation} \gamma:\Delta\to\mathcal{T}(S_X,D_X), \qquad \gamma(0)=X, \nonumber \end{equation}
and choose its parameter so that
\begin{equation} v_X =d\gamma_0(1)=[\mu_{q_0}]. \nonumber \end{equation}
By (\ref{eq:q-pairing}),
\begin{equation} \langle v_X,q_0\rangle=\|q_0\|_1>0. \nonumber \end{equation}

On the central fiber set
\begin{equation} E_Y=\operatorname{Br}(r)\cup h(D_X), \qquad E_X=h^{-1}(E_Y), \qquad E_Z=r^{-1}(E_Y)=p^{-1}(E_X). \nonumber \end{equation}
After shrinking $\Delta$, Lemma~\ref{lem:relative-quotient} gives holomorphic sections whose values in each fiber are exactly these points. The family of Riemann surfaces with the additional marked points therefore defines a lift
\begin{equation} \widehat{\gamma}:\Delta\to\mathcal{T}(S_X,E_X) \nonumber \end{equation}
of $\gamma$ under the forgetful map $\mathcal{T}(S_X,E_X)\to\mathcal{T}(S_X,D_X)$. Put $\widehat{v}_X=d\widehat{\gamma}_0(1)$. The cotangent map of the forgetful map is the natural inclusion
\begin{equation} Q(X,D_X)\hookrightarrow Q(X,E_X). \nonumber \end{equation}
Therefore
\begin{equation} \label{eq:marked-pairing} \langle\widehat{v}_X,q_0\rangle=\langle v_X,q_0\rangle=\|q_0\|_1. \end{equation}

Every branch value of $p$ lies in $E_X$. If $z$ is ramified for $p$ and $x=p(z)$, then the local degree of $r=h\circ p$ at $z$ is greater than one, so $h(x)\in\operatorname{Br}(r)$ and $x\in h^{-1}(E_Y)=E_X$. Thus the fixed topological branched cover
\begin{equation} p:(S_Z,E_Z)\to(S_X,E_X) \nonumber \end{equation}
defines a covering map
\begin{equation} f_p:\mathcal{T}(S_X,E_X)\to\mathcal{T}(S_Z,E_Z). \nonumber \end{equation}
Let
\begin{equation} \widetilde{\gamma}=f_p\circ\widehat{\gamma}, \qquad v_Z=d\widetilde{\gamma}_0(1). \nonumber \end{equation}
Equations (\ref{eq:beltrami-pullback}), (\ref{eq:cover-pairing}), and (\ref{eq:marked-pairing}) give
\begin{equation} \label{eq:upstairs-pairing} v_Z=df_p(\widehat{v}_X), \qquad \langle v_Z,p^*q_0\rangle=(\deg p)\|q_0\|_1>0. \end{equation}

For every $g\in G$ and every $t\in\Delta$, the deck transformation $g$ is biholomorphic on $Z_t$ and preserves $E_Z$. Hence the action fixes the lifted Teichm\"uller disc:
\begin{equation} g\cdot\widetilde{\gamma}(t)=\widetilde{\gamma}(t). \nonumber \end{equation}
Differentiating at $t=0$ gives
\begin{equation} \label{eq:invariant-tangent} g\cdot v_Z=v_Z. \end{equation}
Using (\ref{eq:pairing-equivariant}), (\ref{eq:invariant-tangent}), and (\ref{eq:character}), we obtain
\begin{equation} \langle v_Z,p^*q_0\rangle =\langle g\cdot v_Z,p^*q_0\rangle =\langle v_Z,g^*p^*q_0\rangle =\chi(g)\langle v_Z,p^*q_0\rangle. \nonumber \end{equation}
The pairing in (\ref{eq:upstairs-pairing}) is nonzero, so $\chi(g)=1$ for every $g\in G$.

Thus $p^*q_0$ is invariant under the deck group of $r: Z\to Y$. Hence there is a meromorphic quadratic differential $\eta$ on $Y$ with
\begin{equation} p^*q_0=r^*\eta=p^*h^*\eta. \nonumber \end{equation}
Since $p$ is surjective, $p^*$ is injective on meromorphic quadratic differentials, so $q_0=h^*\eta$.
\end{proof}

For a meromorphic quadratic differential $q$ on $X$, define the pushforward $h_*q$ away from the branch values by summing its pullbacks under the local inverse branches of $h$. This extends uniquely to a meromorphic quadratic differential on $Y$.

\begin{proposition}\label{prop:trace-reconstruction}
Let $X\in N^\circ$, let $h: X\to Y$ be the quotient map, and define
\begin{equation} D_Y=\{y\in Y:h^{-1}(y)\subset D_X \cup \operatorname{Ram}(h) \}. \nonumber \end{equation}
Then $D_Y$ is finite. Writing $m=\deg h$,
\begin{equation} \label{eq:trace-reconstruction} P_X=\frac{1}{m} h^*h_*, \qquad Q_X N=\frac{1}{m} h^*Q(Y,D_Y). \end{equation}
\end{proposition}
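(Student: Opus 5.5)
The plan is to combine Lemma~\ref{lem:character-trivial} with the fiberwise averaging formula (\ref{eq:fiber-average}), and then read off the pole conditions from the local order formula (\ref{eq:local-order}). First, $D_Y$ is finite: if $y\notin h(D_X)\cup\operatorname{Br}(h)$, then $h^{-1}(y)$ consists of unmarked unramified points, so $y\notin D_Y$. Hence $D_Y\subset h(D_X)\cup\operatorname{Br}(h)$, which is a finite set.

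For the formula for $P_X$, take the $q_0$ from Lemma~\ref{lem:sheet-character}. By Lemma~\ref{lem:character-trivial}, $q_0=h^*\eta$. On a coordinate disc $D\subset Y$ where $h$ is unramified and $q_0$ has neither zero nor pole, write $\eta=\varphi(y)\,dy^2$. In the sheet coordinates, every sheet then has the same coefficient, $\varphi_i=\varphi$ for all $i$. Substituting into (\ref{eq:fiber-average}), the weights $|\varphi_i|$ are all equal and cancel, giving
\begin{equation}
u_q=\frac{1}{m}\sum_{i=1}^m\frac{\psi_i}{\varphi}=\frac{h_*q}{m\,\eta}\quad\text{on }D.
\nonumber
\end{equation}
Multiplying by $q_0=h^*\eta$ gives $P_Xq=\frac1m h^*h_*q$ on $h^{-1}(D)$. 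Both sides are meromorphic on $X$, so the identity holds globally by analytic continuation. This proves $P_X=\frac1m h^*h_*$.

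For the range, let $\xi=\frac1m h_*q$, so that $P_Xq=h^*\xi$ lies in $Q(X,D_X)$. Suppose $\xi$ has order $a$ at $y$, and $x\in h^{-1}(y)$ has local degree $k$. By (\ref{eq:local-order}), $\operatorname{ord}_x h^*\xi=k(a+2)-2\ge -1$, which forces $a\ge -1$. If $a=-1$, this order equals $k-2$. That is allowed only if $k\ge2$ or $x\in D_X$, and it must hold for every $x\in h^{-1}(y)$, so $y\in D_Y$. Hence $\xi\in Q(Y,D_Y)$, and
\begin{equation}
Q_XN\subset \tfrac1m h^*Q(Y,D_Y).
\nonumber
\end{equation}

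Conversely, let $\eta'\in Q(Y,D_Y)$. The same order count shows $h^*\eta'\in Q(X,D_X)$. Since $h_*h^*\eta'=m\eta'$, we get $P_X(h^*\eta')=h^*\eta'$, so $h^*\eta'$ lies in the range of $P_X$, which is $Q_XN$. This gives equality in (\ref{eq:trace-reconstruction}).

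The only genuinely nontrivial input is Lemma~\ref{lem:character-trivial}, which is already established. After it, the main point requiring care is the pole bookkeeping. There, the definition of $D_Y$ (rather than all of $h(D_X)\cup\operatorname{Br}(h)$) is dictated by the requirement that pullbacks have no simple poles at unmarked unramified points. I would simply verify this point directly from (\ref{eq:local-order}), as above.
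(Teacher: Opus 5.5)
Your proposal is correct and follows the paper's proof: you use the descended differential $q_0=h^*\eta$ from Lemma~\ref{lem:character-trivial} to make all the sheet weights in (\ref{eq:fiber-average}) equal, which gives $P_X=\frac1m h^*h_*$, and then read off the range from the local order formula (\ref{eq:local-order}). The only difference is cosmetic: you prove the inclusions $Q_XN\subset\frac1m h^*Q(Y,D_Y)$ and its converse separately, whereas the paper first shows $h_*Q(X,D_X)=\{u: h^*u\in Q(X,D_X)\}=Q(Y,D_Y)$ and then applies $\frac1m h^*$.
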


\begin{proof}
Since $\operatorname{Br}(h)\cup h(D_X)$ is finite and \begin{equation} D_Y\subset h\bigl(h^{-1}(D_Y)\bigr)\subset h(D_X \cup \operatorname{Ram}(h))=\operatorname{Br}(h)\cup h(D_X), \nonumber \end{equation} the set $D_Y$ is finite.

By Lemma~\ref{lem:character-trivial}, choose $0\ne q_0\in Q_X N$ with $q_0=h^*\eta$. Let $D\subset Y$ be a coordinate disc containing no branch value and no zero or pole of $\eta$, and write $\eta=\varphi(y)\,dy^2$. On every inverse sheet the coefficient of $q_0$ is the same $\varphi(y)$. (\ref{eq:alpha-weights}) becomes
\begin{equation} \alpha_i(y)=\frac{1}{m\varphi(y)}. \nonumber \end{equation}
If $q|_{X_i}=\psi_i(y)\,dy^2$, then (\ref{eq:weighted-average}) gives on $h^{-1}(D)$
\begin{equation} P_Xq=\frac{1}{m} h^*\left(\sum_i\psi_i(y)\,dy^2\right) =\frac{1}{m} h^*h_*q. \nonumber \end{equation}
Both sides are meromorphic quadratic differentials, so the identity holds globally.

Now we determine the image of
\begin{equation} h_*:Q(X,D_X)\to \{\text{meromorphic quadratic differentials on }Y\}.     \nonumber \end{equation}
Since
\begin{equation} P_X=\frac{1}{m} h^*h_* \qquad\text{and}\qquad h_*h^*=m\,\mathrm{id},  \nonumber \end{equation}
we have
\begin{equation} h_*Q(X,D_X) =\{u:h^*u\in Q(X,D_X)\}.    \nonumber \end{equation}

It remains to characterize the differentials on the right.  Let $y=h(x)$ and let $k$ be the local degree of $h$ at $x$.  If $u$ has order $a$ at $y$,
\begin{equation} \operatorname{ord}_x(h^*u)=k(a+2)-2.    \nonumber \end{equation}
Thus there is no pole of order at least two.  A simple pole at $y$ appears precisely when every unramified point of $h^{-1}(y)$ belongs to $D_X$, or equivalently when $y\in D_Y$.  Hence
\begin{equation} h_*Q(X,D_X)=Q(Y,D_Y).   \nonumber \end{equation}
Since $\operatorname{Ran}P_X=Q_X N$, we conclude that
\begin{equation} Q_X N=\frac{1}{m} h^*Q(Y,D_Y). \nonumber \end{equation}
\end{proof}

\subsection{Reconstructing the marked points}

\begin{proposition}\label{prop:marked-completion}
Let $X\in N^\circ$, let $h: X\to Y$ be the quotient map, and let \begin{equation} B=\operatorname{Br}(h)\cup h(D_X). \nonumber \end{equation} Then
\begin{equation} \label{eq:B-equals-DY} B=D_Y \end{equation}
and \begin{equation} \label{eq:marked-cover} h^{-1}(B)=D_X\cup\operatorname{Ram}(h). \end{equation} Thus $h$ defines a covering construction and its pullback cotangent subspace is $Q_X N$.
\end{proposition}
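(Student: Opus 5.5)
Since $D_Y\subset\operatorname{Br}(h)\cup h(D_X)=B$ was already shown in the proof of Proposition~\ref{prop:trace-reconstruction}, the plan is to prove the reverse inclusion $B\subset D_Y$. The idea is to put the whole argument downstairs on $Y$ and rerun the norm-one projection machinery of Section~2 there. Once $B=D_Y$ is known, (\ref{eq:marked-cover}) is formal:
\begin{itemize}
\item $h^{-1}(B)=h^{-1}(D_Y)\subset D_X\cup\operatorname{Ram}(h)$ by the definition of $D_Y$;
\item $D_X\cup\operatorname{Ram}(h)\subset h^{-1}(\operatorname{Br}(h)\cup h(D_X))=h^{-1}(B)$.
\end{itemize}
Then (\ref{eq:trace-reconstruction}) reads $Q_XN=\frac1m h^*Q(Y,B)$, which is the cotangent description of Lemma~\ref{lem:localCompute}.

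\textbf{Step 1: a totally geodesic image in $\mathcal{T}(S_Y,B)$.} By Lemma~\ref{lem:relative-quotient}, over a neighbourhood $U\subset N^\circ$ of $X$ the maps $h_{X'}:X'\to Y_{X'}$ form a topologically trivial holomorphic family. The points of $B_{X'}=\operatorname{Br}(h_{X'})\cup h_{X'}(D_{X'})$ vary holomorphically. This gives a holomorphic map
\[
\Phi:U\to\mathcal{T}(S_Y,B),\qquad X'\mapsto (Y_{X'},B_{X'}).
\]
It is injective, since $X'$ is recovered from $(Y_{X'},B_{X'})$ by the fixed branched cover. Now take $0\ne\eta\in Q(Y,D_Y)$, so that $q=\frac1m h^*\eta\in Q_XN$ by Proposition~\ref{prop:trace-reconstruction}. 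The Teichm\"uller deformation of $X$ along $q$ is the pullback of the affine deformation of the flat surface $(Y,\eta)$. Along it, $h$ stays holomorphic, and the points of $B$ are carried by the affine maps. So $\Phi$ maps the disc of $q$ (which lies in $N$) onto the Teichm\"uller disc of $\eta$ in $\mathcal{T}(S_Y,B)$, where $\eta$ is viewed in $Q(Y,D_Y)\subset Q(Y,B)$.

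Hence $\Phi(U)$ is a complex submanifold of dimension
\[
d=\dim Q_XN=\dim Q(Y,D_Y)
\]
containing the Teichm\"uller discs of all $\eta\in Q(Y,D_Y)$. Its cotangent space at $Y$ is therefore identified with $Q(Y,D_Y)\subset Q(Y,B)$. The argument of Proposition~\ref{prop:canonical-projection} then applies verbatim: only the disc property for elements of the chosen subspace and the dimension count were used. It gives a contractive projection
\[
P_Y:Q(Y,B)\to Q(Y,D_Y).
\]

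\textbf{Step 2: the projection on $Y$ is the identity.} Fix $0\ne\eta_0\in Q(Y,D_Y)$ and transfer $P_Y$ to $L^1(Y,|\eta_0|)$ exactly as on $X$. The range consists of the functions $\eta/\eta_0$ with $\eta\in Q(Y,D_Y)$. Their pullbacks by $h$ are the ratios of elements of $Q_XN$, which by (\ref{eq:ratio-characterization}) generate $\mathbb{C}(Y)$. So the map $G$ built from a basis of the range is injective off a finite set, as in the proof of Proposition~\ref{prop:fiber-average}. Lemma~\ref{lem:L1-projection} then forces $P_Y=\mathrm{id}$, that is, $Q(Y,B)=Q(Y,D_Y)$.

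\textbf{Step 3: Riemann--Roch.} Suppose $y_0\in B\setminus D_Y$. Set $E=D_Y\cup\{y_0\}$ and $L=K_Y^{\otimes2}(D_Y)$. Then
\[
\deg K_Y^{-1}(-D_Y)=-(2g_Y-2+|D_Y|)<0 .
\]
This is negative because $\dim Q(Y,D_Y)=d\ge3$ excludes the unstable cases. By Serre duality, $H^1(Y,L)=0$, and the same holds for $K_Y^{\otimes2}(E)$. Riemann--Roch then gives
\[
\dim Q(Y,E)=\dim Q(Y,D_Y)+1 .
\]
But $Q(Y,D_Y)\subset Q(Y,E)\subset Q(Y,B)=Q(Y,D_Y)$, a contradiction. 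Hence $B=D_Y$.

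The main obstacle is Step 1. One must check carefully that the Teichm\"uller disc of $q=\frac1m h^*\eta$ really corresponds, inside the family from Lemma~\ref{lem:relative-quotient}, to the Teichm\"uller disc of $\eta$ in $\mathcal{T}(S_Y,B)$, including the motion of the extra points of $B\setminus D_Y$. One must also check that this disc stays within the neighbourhood where the family is defined. I would handle this by writing the Beltrami differentials explicitly: $\mu_q=h^*\mu_\eta$ by (\ref{eq:beltrami-pullback}), with the extra points moved by the quasiconformal maps of $\mu_\eta$. I would then invoke uniqueness of the holomorphic family through $X$ with prescribed tangent to match the two deformations.
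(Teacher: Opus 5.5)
Your three steps are the paper's proof. The paper also builds $c_Y\colon U\to\mathcal{T}(Y,B)$ (your $\Phi$) and obtains the norm-one projection $P_Y\colon Q(Y,B)\to Q(Y,D_Y)$ by the duality argument of Proposition~\ref{prop:canonical-projection}. It then forces $P_Y=\mathrm{id}$ using Lemma~\ref{lem:L1-projection} together with (\ref{eq:ratio-characterization}), and finishes with Riemann--Roch; your one-extra-point version is equivalent to its count $\deg D_Y=\deg B$. One small correction in Step 1: to get a submanifold you need $\Phi$ to be an immersion, not merely injective. This follows from $f_h\circ\Phi=\iota$, where $f_h$ pulls back along $h$ and forgets $h^{-1}(B)\setminus D_X$.

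The step you flag is where your plan fails. Both $\Phi\circ\gamma_q$ and $\gamma_\eta$ are lifts of $\gamma_q$ through $f_h$ starting at $Y$. The identity $\mu_q=h^*\mu_\eta$ only says that $(df_h)_Y$ sends $d\Phi[\mu_q]$ and $[\mu_\eta]$ to the same vector. Now $\langle (df_h)_Y[\nu],q\rangle=\langle \nu,h_*q\rangle$, and $h_*Q(X,D_X)=Q(Y,D_Y)$ by Proposition~\ref{prop:trace-reconstruction}. So $\ker (df_h)_Y$ is the annihilator of $Q(Y,D_Y)$ in $T_Y\mathcal{T}(S_Y,B)$, which has dimension $|B\setminus D_Y|$. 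It is nonzero exactly in the case you are trying to rule out. Consequently:
\begin{itemize}
\item $f_h$ is not known to be an immersion at $Y$, so no uniqueness of lifts is available;
\item the tangent of $\Phi\circ\gamma_q$ at $Y$ is determined only modulo this kernel;
\item a holomorphic disc would not be determined by its tangent vector anyway.
\end{itemize}
You also cannot make do with less. The $L^1$ norm on $Q(Y,B)$ is differentiable away from $0$, so $[\mu_\eta]$ is the only unit vector $w$ with $\langle w,\eta\rangle=\|\eta\|_1$. Hence the isometry of $\rho_Y$ on $Q(Y,D_Y)$ is equivalent to $[\mu_\eta]\in T_Y\Phi(U)$ for every $\eta\in Q(Y,D_Y)$.

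What is actually needed is a statement about the construction $X'\mapsto h_{X'}$ along the disc. Namely, the quotient map of $\gamma_q(t)$ defined by the linear system $Q_{\gamma_q(t)}N$ must be the affine deformation $h_t$ of $h$, with the points of $B$ carried by the affine map, so that $\Phi(\gamma_q(t))=\gamma_\eta(t)$ exactly. This concerns how $Q_{X'}N$ varies along the disc, not first-order data at $X$. The paper asserts this in one sentence (``pullback by $h$ identifies the Teichm\"uller deformation determined by $\eta$ with that determined by $q$; hence $c_Y$ maps the latter locally to the former''). So your outline matches the paper's level of detail, but the verification you propose (Beltrami pullback plus uniqueness of a family with prescribed tangent) would not establish it.
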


\begin{proof}
Fix $X\in N^\circ$, write $h:X\to Y$ for the quotient map, and let $m=\deg h$. Choose a simply connected neighborhood $U\subset N^\circ$ on which Lemma~\ref{lem:relative-quotient} gives a fixed topological branched cover $h$ and constant marking sections. The family of quotient surfaces over $U$, marked by the branch values and the images of the marked points, defines a holomorphic map
\begin{equation} c_Y:U\to\mathcal{T}(Y,B). \nonumber \end{equation}
Pullback by $h$, followed by forgetting the additional marked points in $h^{-1}(B)\setminus D_X$, defines a holomorphic map
\begin{equation} f_h:\mathcal{T}(Y,B)\to\mathcal{T}(X,D_X). \nonumber \end{equation}
For each $t\in U$, the map $H_t:X_t\to Y_t$ have the same underlying topological branched cover as $h$. For each $t\in U$, pulling back the marked Riemann surface represented by $c_Y(t)$ along $h$ and then forgetting the points in $h^{-1}(B)\setminus D_X$ gives the original marked surface $X_t$. Hence
\begin{equation}
f_h(c_Y(t))=\iota(t).
\nonumber
\end{equation}
Here $\iota:U\hookrightarrow\mathcal{T}(X,D_X)$ is the inclusion. Hence $c_Y$ is an immersion; after shrinking $U$, we regard $c_Y(U)$ as a $d$-dimensional complex submanifold of $\mathcal{T}(Y,B)$.

At the central fiber set
\begin{equation} V_Y=Q(Y,D_Y)\subset Q(Y,B). \nonumber \end{equation}
By Proposition~\ref{prop:trace-reconstruction},
\begin{equation} Q_X N=\frac{1}{m} h^*V_Y, \nonumber \end{equation}
and the normalized pullback is an $L^1$-isometry. In particular, $\dim V_Y=d$.

Let
\begin{equation} \rho_Y:Q(Y,B)\to T_Y^*c_Y(U) \nonumber \end{equation}
be cotangent restriction. We claim that $\rho_Y|_{V_Y}$ is an isometry. Indeed, if $0\ne\eta\in V_Y$ and
\begin{equation} q=\frac{1}{m} h^*\eta\in Q_X N, \nonumber \end{equation}
then a small neighborhood of the Teichm\"uller disc determined by $q$ lies in $U$. Since $q=\frac{1}{m}h^*\eta$, pullback by $h$ identifies the Teichm\"uller deformation determined by $\eta$ with that determined by $q$; hence $c_Y$ maps the latter locally to the former. Thus the unit Teichm\"uller tangent determined by $\eta$ lies in $T_Yc_Y(U)$. The same duality argument as in Proposition~\ref{prop:canonical-projection} gives
\begin{equation} \|\rho_Y(\eta)\|=\|\eta\|_1. \nonumber \end{equation}
Since $\dim V_Y=\dim c_Y(U)=d$, the restriction $\rho_Y|_{V_Y}$ is an isometric isomorphism. Therefore
\begin{equation} P_Y=(\rho_Y|_{V_Y})^{-1}\circ\rho_Y:Q(Y,B)\to V_Y \nonumber \end{equation}
is a norm-one projection.

We now show that $P_Y$ is the identity. Choose a basis $\eta_0,\ldots,\eta_{d-1}$ of $V_Y$ with $\eta_0\ne0$, and set $f_i=\eta_i/\eta_0$. The ratios of elements of $V_Y$ generate $\mathbb{C}(Y)$: after pullback by $h$, they generate the ratio field of $Q_X N$, which is $h^*\mathbb{C}(Y)$ by (\ref{eq:ratio-characterization}). Hence, after removing finitely many points, the map
\begin{equation} F=(f_1,\ldots,f_{d-1}) \nonumber \end{equation}
is one-to-one onto its image.

After dividing by $\eta_0$, $Q(Y,B)$ can be identified isometrically with
\begin{equation} A=\left\{\frac{\eta}{\eta_0}:\eta\in Q(Y,B)\right\}\subset L^1(Y,|\eta_0|). \nonumber \end{equation}
Under this identification, $P_Y$ becomes the contractive projection
\begin{equation} \Pi\left(\frac{\eta}{\eta_0}\right)=\frac{P_Y\eta}{\eta_0}, \nonumber \end{equation}
whose range is spanned by $1,f_1,\ldots,f_{d-1}$. Lemma~\ref{lem:L1-projection}, together with the generic injectivity of $F$, implies
\begin{equation} f=\Pi f\qquad\text{a.e. on }Y \nonumber \end{equation}
for every $f\in A$. Hence $P_Y=\mathrm{id}$, and therefore
\begin{equation} Q(Y,D_Y)=Q(Y,B). \label{eq:Q-equality} \end{equation}

Riemann-Roch theorem gives, for $D=D_Y$ and $D=B$, 
\begin{equation} h^0 \bigl(K_Y^{\otimes 2}(D)\bigr) =
\begin{cases}
3g(Y)-3+\deg D, & g(Y)\ge 2,\\[2mm]
\deg D, & g(Y)=1,\ \deg D>0,\\[2mm]
1, & g(Y)=1,\ D=0,\\[2mm]
\max\{\deg D-3,\,0\}, & g(Y)=0.
\end{cases}    \nonumber \end{equation} 
Since $h^0\bigl(K_Y^{\otimes 2}(D)\bigr)\ge 3,$, the exceptional cases in genus 0 and 1 do not occur. Hence, in the cases here, \begin{equation} h^0\bigl(K_Y^{\otimes2}(D)\bigr)=3g(Y)-3+\deg D. \nonumber \end{equation}
Together with $Q(Y,D_Y)=Q(Y,B)$, this implies $\deg D_Y=\deg B$. Since $D_Y\subset B$ and both are reduced, (\ref{eq:B-equals-DY}) follows.

Finally, $B$ contains $h(D_X)$ and every branch value, so
\begin{equation} D_X\cup\operatorname{Ram}(h)\subset h^{-1}(B). \nonumber \end{equation}
Conversely, if $x\in h^{-1}(B)=h^{-1}(D_Y)$ is not ramified, then the definition of $D_Y$ gives $x\in D_X$. Hence
\begin{equation} h^{-1}(B)=D_X\cup\operatorname{Ram}(h). \end{equation}
The asserted description of $Q_X N$ is already given by Proposition~\ref{prop:trace-reconstruction}.
\end{proof}
\begin{proof}[Proof of Theorem~\ref{thm:generic-reconstruction}]
Take the nonempty dense open set $N^\circ$ from Lemma~\ref{lem:relative-quotient}. For $X\in N^\circ$, let $h: X\to Y$ be the quotient map and set $B=\operatorname{Br}(h)\cup h(D_X)$. Proposition~\ref{prop:marked-completion} gives
\begin{equation} h^{-1}(B)=D_X\cup\operatorname{Ram}(h) \nonumber \end{equation}
and, together with Proposition~\ref{prop:trace-reconstruction},
\begin{equation} Q_X N=\frac{1}{\deg h}h^*Q(Y,B). \nonumber \end{equation}
\end{proof}

\section{Proof of Theorem~\ref{thm:main}}\label{sec:main-proof}

\begin{proof}
Choose $X\in N^\circ$ and let $h:(X,D_X)\to(Y,B)$ be given by Theorem~\ref{thm:generic-reconstruction}. The equality
\begin{equation} h^{-1}(B)=D_X\cup\operatorname{Ram}(h) \nonumber \end{equation}
is the compatibility condition in Definition~\ref{def:covering}. Therefore the underlying topological branched cover defines a covering construction
\begin{equation} f_h:\mathcal{T}(Y,B)\to\mathcal{T}_{g,n} \nonumber \end{equation}
whose image contains $X$. By Proposition~\ref{prop:trace-reconstruction}, the cotangent subspace at $X$ is
\begin{equation} \frac{1}{\deg h}h^*Q(Y,B)=Q_X N. \nonumber \end{equation}
Let $N_h=f_h(\mathcal{T}(Y,B))$. If $Z\in N$, the Teichm\"uller disc through $X$ and $Z$ is generated at $X$ by an element of $Q_X N$, hence by an element of the cotangent subspace of $N_h$; the covering construction is isometric, so this disc lies in $N_h$. Thus $N\subset N_h$. Conversely, if $Z\in N_h$, the disc through $X$ and $Z$ is generated by an element of the same subspace $Q_X N$, and therefore lies in $N$ by the definition of $Q_X N$. Hence $N_h\subset N$ and
\begin{equation} N=f_h\bigl(\mathcal{T}(Y,B)\bigr). \nonumber \end{equation}
\end{proof}

\appendix
\section{Holomorphic families of the quotient}\label{app:relative-quotient-proof}

In the appendix, we prove Lemma~\ref{lem:relative-quotient}, which shows that the quotient maps $h_X:X\to Y_X$ vary holomorphically and locally topologically trivially over a dense open subset of $N$.

We will repeatedly remove proper analytic subsets of the base. Frisch's generic flatness theorem \cite{MR222336}, together with Remmert's proper mapping theorem \cite{Demailly2012}, allows us first to arrange that the relative projective image is flat. Standard results on families of complex spaces then allow us to remove a further proper analytic subset so that all fibers are reduced \cite{MR430286}. For a flat family of reduced curves, the locus where the fibers are not normal is analytic. Since our family is proper, this locus is proper over the base.  The simultaneous normalization theorem \cite{MR2225697} then implies that, after one restriction of the base, the normalization of the total space normalizes every fiber.

First we recall the analytic results used in the normalization argument.

\begin{theorem}\cite{Demailly2012}\label{thm:remmert}
Let $f:X\to S$ be a proper holomorphic map of complex spaces. If $A\subset X$ is a closed analytic subset, then $f(A)$ is a closed analytic subset of $S$.
\end{theorem}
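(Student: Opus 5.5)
Since this is Remmert's classical proper mapping theorem, cited from \cite{Demailly2012}, my plan is to derive it from Grauert's direct image theorem, not to reprove it from scratch. First, replace $X$ by $A$. Give $A$ its reduced structure with structure sheaf $\mathcal{O}_A$, and write $f_A=f|_A:A\to S$. A closed subset of a space on which $f$ is proper is again one on which the restriction is proper, so $f_A$ is a proper holomorphic map of complex spaces. The question is local on $S$, so it suffices to show that $f(A)$ is closed and is locally the zero set of finitely many holomorphic functions.

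The key step is to identify $f(A)$ with the support of the direct image sheaf $\mathcal{F}=(f_A)_*\mathcal{O}_A$. Grauert's direct image theorem says $\mathcal{F}$ is a coherent $\mathcal{O}_S$-module. The support of a coherent sheaf is a closed analytic subset: locally $\mathcal{F}$ is a cokernel of $\mathcal{O}_S^p\to\mathcal{O}_S^q$, and its support is the zero set of the annihilator ideal, or equivalently of the $q\times q$ minors of the presentation matrix. So it remains to prove
\begin{equation} f(A)=\operatorname{Supp}\mathcal{F}. \nonumber \end{equation}

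I would prove this equality pointwise. If $s\notin f(A)$, note that proper maps to locally compact Hausdorff spaces are closed, so $f(A)$ is closed and some neighborhood $U$ of $s$ misses it. Then $f_A^{-1}(U)=\emptyset$, so $\mathcal{F}_s=0$. If $s\in f(A)$, then for every neighborhood $U$ of $s$ the set $f_A^{-1}(U)$ is nonempty, and the constant section $1\in\mathcal{O}_A(f_A^{-1}(U))$ is nonzero. It restricts compatibly, so its germ in $\mathcal{F}_s=\varinjlim_U\mathcal{O}_A(f_A^{-1}(U))$ is nonzero, since its value at a point of the nonempty fiber $f_A^{-1}(s)$ is $1$. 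Hence $\mathcal{F}_s\neq0$. This gives the equality, and hence the theorem.

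The main obstacle is entirely the coherence of $(f_A)_*\mathcal{O}_A$, which is Grauert's theorem, itself a deep result. If one wanted to avoid it, the fallback is Remmert's original route. One treats the finite case first: when fibers are finite, Weierstrass preparation and Noether normalization show that the image is locally defined by the vanishing of suitable resultant/norm functions. The general case then follows by induction on $\dim A$, using upper semicontinuity of fiber dimension to stratify $A$. Locally on $A$ one chooses local projections making $f$ finite on the pieces, and the compactness of fibers from properness makes finitely many such charts suffice. In the context of this paper I would simply invoke Grauert's theorem as above.
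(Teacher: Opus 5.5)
Your argument is correct, and it takes a genuinely different route from the paper's. The paper gives no proof at all: the statement is Remmert's classical theorem, quoted from \cite{Demailly2012} as a black box for the appendix. There it is established by the direct argument from the local theory of analytic sets, not from Grauert's direct image theorem.

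You instead reduce everything to Grauert's theorem via the identity $f(A)=\operatorname{Supp}\,(f_A)_*\mathcal{O}_A$, and your verification of that identity is complete. Properness makes $f(A)$ closed, which handles $s\notin f(A)$. For $s\in f(A)$, the germ of the constant section $1$ is nonzero because $f_A^{-1}(U)\neq\emptyset$ for every neighborhood $U$ of $s$. The unit section is exactly what is needed here: for a general coherent $\mathcal{G}$ one only has $\operatorname{Supp} f_*\mathcal{G}\subset f(\operatorname{Supp}\mathcal{G})$, and the inclusion can be strict (e.g.\ $\mathcal{O}(-1)$ on $\mathbb{P}^1$ over a point).

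This is the standard short derivation. It even gives a natural complex-subspace structure on $f(A)$, defined by the annihilator of $(f_A)_*\mathcal{O}_A$. The price is that you prove the theorem by invoking a considerably deeper one. You should therefore make sure the proof of Grauert's theorem you rely on does not itself pass through the proper mapping theorem; the standard proofs do not. The direct route of the cited reference is longer but elementary.

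Your fallback sketch of that direct route would not stand on its own. Reducing to finite maps by local slices transverse to the fibers works only where the fiber dimension is locally constant. The real difficulty is passing across the image of the analytic locus where the fiber dimension jumps. That step requires an extension theorem of Remmert--Stein type, using the fact that this image has smaller dimension than the generic image. Since your main argument does not use the fallback, this does not affect the proposal.
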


\begin{theorem}\cite{MR222336}\label{thm:flatness}
Let $f:X\to S$ be a holomorphic map of complex spaces and let $\mathcal{F}$ be a coherent $\mathcal{O}_X$-module. Then
\begin{equation} \Sigma(\mathcal{F})=\{x\in X:\mathcal{F}_x\text{ is not flat over }\mathcal{O}_{S,f(x)}\} \nonumber \end{equation}
is a closed analytic subset of $X$. If $X$ is reduced, then $f(\Sigma(\mathcal{F}))$ is nowhere dense in $S$.
\end{theorem}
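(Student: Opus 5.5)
We will prove the two assertions separately, working locally on $X$ throughout. Near a point $x\in X$ we factor $f$ as a closed embedding $X\hookrightarrow \Delta^k\times S$ followed by the projection, where $\Delta^k$ is a polydisc. We then regard $\mathcal{F}$ as a coherent sheaf on $P=\Delta^k\times S$. Flatness of $\mathcal{F}_x$ over $\mathcal{O}_{S,f(x)}$ does not change under this factorization, so both statements reduce to the case of a projection $\pi:P\to S$.

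\textbf{Analyticity of $\Sigma(\mathcal{F})$.} We use the local flatness criterion. For a finitely generated module over the noetherian local ring $\mathcal{O}_{P,x}$, which is an $\mathcal{O}_{S,s}$-algebra with $s=\pi(x)$, $\mathcal{F}_x$ is flat over $\mathcal{O}_{S,s}$ if and only if $\operatorname{Tor}_1^{\mathcal{O}_{S,s}}(\mathcal{F}_x,\mathbb{C})=0$.
\begin{itemize}
\item First choose, on a neighborhood of $s$, a finite free presentation of $\mathcal{O}_S/\mathfrak{m}_s$. Globalizing over $s$ is done with the relative diagonal: flatness at $x$ is tested by the coherent sheaf $\mathcal{T}or_1^{\pi^{-1}\mathcal{O}_S}(\mathcal{F},\mathcal{O}_{P}\otimes\cdots)$, computed on $P\times_S S$ using the ideal of the diagonal in $S\times S$, pulled back.
\item This reduces non-flatness to the nonvanishing of a coherent sheaf along the diagonal, intersected with the diagonal.
\item The support of a coherent sheaf is analytic. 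Restricting to the diagonal therefore exhibits $\Sigma(\mathcal{F})$ as a closed analytic set.
\end{itemize}
An alternative is Frisch's original route. Choose local free presentations $\mathcal{O}_P^a\to\mathcal{O}_P^b\to\mathcal{F}\to0$. Flatness at $x$ is then equivalent to the exactness of the presentation restricted to the fiber $\pi^{-1}(s)$ being preserved to first order. Expressed via the lower semicontinuity of fiber ranks, this is a condition on ranks of matrices of holomorphic functions, and hence is analytic.

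\textbf{Nowhere density of the image.} Here we assume the base is reduced; the hypothesis should really concern $S$. We argue by contradiction in the style of Grothendieck's generic freeness. Suppose $f(\Sigma)$ contains a nonempty open $W\subset S$. Shrinking $W$, we may assume $W$ is a connected complex manifold, since the regular locus of a reduced space is dense. We then induct on $\dim\operatorname{Supp}\mathcal{F}$.
\begin{itemize}
\item Over a smooth base, take a point where $\pi|_{\operatorname{Supp}\mathcal{F}}$ has locally constant fiber dimension. There, a local Noether normalization makes $\operatorname{Supp}\mathcal{F}$ finite over $\Delta^e\times W$.
\item Pushing forward gives a coherent sheaf $\mathcal{G}$ on $\Delta^e\times W$. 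This preserves flatness over $W$, because pushforward under a finite map is exact on stalks.
\item On a dense open set, $\mathcal{G}$ is locally free, since coherent sheaves are free off a proper analytic subset (the degeneracy locus of the Fitting ideals) over a reduced base. A locally free $\mathcal{G}$ is flat over $W$.
\item The torsion and lower-dimensional parts have strictly smaller-dimensional support. They are handled by the induction hypothesis through the exact sequence $0\to\mathcal{G}_{\text{tors}}\to\mathcal{G}\to\mathcal{G}/\mathcal{G}_{\text{tors}}\to0$, using that extensions of flat modules are flat.
\end{itemize}
This produces a point of $W$ over which $\mathcal{F}$ is flat, giving the contradiction.

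The main obstacle is the globalization step in the analyticity argument. Flatness is defined stalkwise relative to a varying base point, so one has to produce a single coherent object whose support detects non-flatness. Since the paper only uses this theorem as a black box, I would follow Frisch's original Fitting-ideal argument here rather than build the $\operatorname{Tor}$-sheaf from scratch.
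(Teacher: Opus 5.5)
The paper gives no proof of this statement. It is quoted from Frisch and used only for the proper map from the relative image $\mathcal C$ to a polydisc $U_0$, where Remmert's theorem makes $f(\Sigma(\mathcal F))$ a closed analytic set.

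Judged on its own, your proposal has a genuine gap in the first assertion. The local reduction to $\pi\colon P=\Delta^k\times S\to S$ and the criterion $\operatorname{Tor}_1^{\mathcal O_{S,s}}(\mathcal F_x,\mathbb C)=0$ are fine. But the globalization over varying $s$ is the whole content of the theorem, and neither mechanism you offer supplies it.

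The diagonal construction is empty as written: $P\times_S S=P$, and $\mathcal I_\Delta$ pulls back to zero under $p\mapsto(\pi(p),\pi(p))$. Its natural repair, $\mathcal{T}or_1$ on $P\times S$ of $\mathrm{pr}_1^*\mathcal F$ against $\mathcal O_{\Gamma_\pi}$, vanishes identically. For $S\subset\mathbb C^n$ open and $s'$ the coordinate on the second factor, $(p,u)$ with $u=s'-\pi(p)$ are coordinates in which $\mathrm{pr}_1$ is $(p,u)\mapsto p$. So $\mathrm{pr}_1^*\mathcal F$ is flat over the $u$-space, and $u_1,\dots,u_n$ is $\mathrm{pr}_1^*\mathcal F$-regular. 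A flat family of points cannot detect Tor against the moving residue fields.

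The ``rank'' route is closer, but as stated it conflates acyclicity of germs on the fiber with pointwise rank conditions, and the ingredients that make it work are absent. Over a smooth base, which is the only case the paper needs, it can be completed. Take a finite free resolution $(F_\bullet,\phi_\bullet)$ of $\mathcal F$ on $P$, let $\rho_j$ be the generic rank of $\phi_j$, and let $D_j$ be the common zero set of its $\rho_j$-minors. Since $\mathcal O_{P,x}$ is $\mathcal O_{S,s}$-flat, $\mathcal F_x$ is flat iff $F_\bullet|_{\pi^{-1}(s)}$ is acyclic at $x$. The Buchsbaum--Eisenbud criterion in the regular fiber turns this into
\[\Sigma(\mathcal F)=\bigcup_{j\ge1}\{x\in D_j:\ \dim_x\bigl(D_j\cap\pi^{-1}\pi(x)\bigr)\ge k-j+1\},\]
which is analytic by the Cartan--Remmert theorem. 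When $S$ is singular or nonreduced a finite resolution need not exist, so the general case remains a citation of Frisch.

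For the second assertion you are right that the hypothesis must concern $S$. The inclusion of a reduced point into a double point, with $\mathcal F=\mathcal O_X$, has $\Sigma=X$ and $f(\Sigma)=S$. This is harmless in the paper, where $S=U_0$ is smooth. Two problems remain.

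First, refuting $f(\Sigma)\supset W$ only proves that $f(\Sigma)$ has empty interior. That is weaker than nowhere density when $f$ is not proper, and nowhere density is false in that generality: map a countable discrete $X$ onto a dense subset of $S=\mathbb C$ and take $\mathcal F=\mathcal O_X$. The correct conclusion is empty interior, which gives nowhere density for proper $f$, the case the paper uses.

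Second, your argument does not reach even empty interior. The Noether normalization lives near one point $x_0$. Local freeness of $\mathcal G$ on a dense open subset of $\Delta^e\times W$ yields flatness only off a proper analytic subset that may still map onto $W$. For $\mathcal G/\mathcal G_{\mathrm{tors}}$ the non-locally-free locus has codimension at least $2$ and can dominate $W$ once $e\ge2$. Points of $\Sigma$ elsewhere in $f^{-1}(W)$ are not controlled at all. So no point of $W$ over which $\mathcal F$ is flat is produced.

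To repair this, use Baire category over a countable cover of $X$ by compact sets $K$, together with the closedness of $\Sigma$. This reduces you to showing that $f(\Sigma\cap K)$ has no interior for $K$ inside a chart, which still needs a genuinely local argument. In the smooth-base description above that argument is a dimension count. If the image of the $j$-th set had interior, the rank theorem would give $\dim D_j\ge \dim S+k-j+1$. This contradicts $\operatorname{codim}_P D_j\ge j$, which is Buchsbaum--Eisenbud for the exact complex $F_\bullet$ on $P$.
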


\begin{theorem}\cite{MR430286}\label{thm:reducedness}
Let $f:X\to S$ be a flat holomorphic map of complex spaces. The loci
\begin{equation} \{x\in X:X_{f(x)}\text{ is reduced at }x\}, \qquad \{x\in X:X_{f(x)}\text{ is normal at }x\} \nonumber \end{equation}
are open in $X$. If $f$ is surjective, $S$ is a complex manifold, and $X$ is reduced, then there is a nowhere-dense analytic subset $T\subset S$ such that $X_s$ is reduced for every $s\in S\setminus T$. If all fibers are reduced, then
\begin{equation} \operatorname{NNor}(f)=\{x\in X:X_{f(x)}\text{ is not normal at }x\} \nonumber \end{equation}
is an analytic subset of $X$.
\end{theorem}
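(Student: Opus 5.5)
The statement is a compilation of standard facts from the analytic theory of flat families, so the plan is to reduce each clause to a known algebraic criterion on local rings, which then spreads out analytically. The basic tool is flatness. For $x\in X$ with $s=f(x)$, the local ring $\mathcal{O}_{X,x}$ is flat over $\mathcal{O}_{S,s}$, and the fiber ring is $\mathcal{O}_{X_s,x}=\mathcal{O}_{X,x}/\mathfrak{m}_s\mathcal{O}_{X,x}$. Reducedness and normality of a noetherian local ring are Serre-type conditions: reduced means $R_0+S_1$, and normal means $R_1+S_2$. For a flat family both conditions reduce to depth and regularity statements about the fibers.

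\textbf{Openness.} I would prove openness of the two loci by the semicontinuity route.
\begin{enumerate}
\item \emph{Depth.} Depth of fibers is lower semicontinuous on $X$ for a flat map, via the local criterion and the behaviour of $\operatorname{Ext}$ sheaves. Hence the $S_k$ locus of fibers is open.
\item \emph{Regularity.} Relative smoothness of $f$ is an open condition, given by the Jacobian criterion applied to a local embedding.
\item \emph{Singular locus in codimension.} The condition $R_k$ for fibers says that the fiberwise singular locus $\operatorname{Sing}(f)$, a closed analytic subset of $X$ defined by Fitting ideals of $\Omega_{X/S}$, meets each fiber in codimension $>k$. Fiber dimension of $\operatorname{Sing}(f)$ is upper semicontinuous by Cartan–Remmert. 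Combining this with equidimensionality from flatness gives openness of the $R_k$ locus near any point.
\end{enumerate}
Putting $R_0+S_1$ and $R_1+S_2$ together gives openness of the reduced locus and of the normal locus.

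\textbf{Generic reducedness.} Assume $f$ is surjective, $S$ is a manifold and $X$ is reduced. Let $Z\subset X$ be the set where the fiber is not reduced; by the previous step it is closed, and it is analytic since it is cut out by the analytic conditions above. Because $X$ is reduced, the smooth locus $X_{\mathrm{reg}}$ is dense, and on it generic smoothness (Sard's theorem for holomorphic maps) shows that $f|_{X_{\mathrm{reg}}}$ is a submersion off a nowhere-dense set of critical values. Over such values the fibers are smooth along $X_{\mathrm{reg}}$, hence reduced there. Flatness then propagates this through the whole fiber: every associated point of $X_s$ lies over a generic point of a component of $X$, so the $S_1$ condition together with generic reducedness gives reducedness of the whole fiber.

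Set $T=\overline{f(Z)}$. The key point is to show $f(Z)$ is analytic and nowhere dense. Locally on $S$, or when $f$ is proper as in our application, Remmert's Theorem~\ref{thm:remmert} makes $f(Z)$ analytic. It is nowhere dense because it is contained in the critical values together with the image of a lower-dimensional set. \textbf{The main obstacle} is this propagation argument, together with the need for properness in order to apply Remmert. In the non-proper case I would simply cite \cite{MR430286}.

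\textbf{Analyticity of the non-normal locus.} Now assume all fibers are reduced. Then $\operatorname{NNor}(f)$ is closed by openness of the normal locus, so only analyticity remains. Since fibers are reduced (so $R_0$ and $S_1$ hold), normality fails exactly where $R_1$ fails or $S_2$ fails.
\begin{itemize}
\item The $S_2$-failure locus is the support of suitable relative $\operatorname{Ext}$ sheaves $\mathcal{E}xt^{i}(\mathcal{O}_X,\omega_{X/S})$ restricted to fibers. These are coherent, so the locus is analytic.
\item The $R_1$-failure locus is the set of points where $\operatorname{Sing}(f)$ has fiber dimension at least $\dim X_s-1$. This is analytic by Cartan–Remmert semicontinuity of fiber dimension.
\end{itemize}
The union of these two analytic sets is $\operatorname{NNor}(f)$, which is therefore analytic.
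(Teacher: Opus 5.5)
The paper does not prove this statement. It quotes it from \cite{MR430286} and only applies it to the relative image $\mathcal{C}\to U_0$, which is proper. Your openness sketch via Serre's criteria is in the spirit of the standard argument, with two caveats. Flatness does not make fibres equidimensional, so the $R_k$ count must be done componentwise. For singular $S$, semicontinuity of fibre depth needs more than the depth formula.

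\textbf{The gap.} The generic-reducedness part has a genuine gap, at exactly the step you call the main obstacle.
\begin{itemize}
\item Sard applied to $f|_{X_{\mathrm{reg}}}$ only controls $X_s\cap X_{\mathrm{reg}}$. Nothing in your argument controls $X_s$ at points of $\operatorname{Sing}(X)\cap X_s$.
\item The sentence ``every associated point of $X_s$ lies over a generic point of a component of $X$'' is the scheme-theoretic fact $\operatorname{Ass}(X)=\bigcup_{s\in\operatorname{Ass}(S)}\operatorname{Ass}(X_s)$ for flat $f$. It concerns only the generic fibre, says nothing about closed fibres, and has no direct analytic counterpart. Passing from ``$X$ reduced'' to ``$X_s$ reduced for general $s$'' is precisely what has to be proved.
\item Flatness gives $\operatorname{depth}\mathcal{O}_{X,x}=\dim S+\operatorname{depth}\mathcal{O}_{X_s,x}$, while reducedness of $X$ gives only $\operatorname{depth}\mathcal{O}_{X,x}\ge1$. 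So no $S_1$ property of the fibre follows once $\dim S\ge1$.
\item Your nowhere-density claim fails for the same reason. $Z\cap\operatorname{Sing}(X)$ is small in $X$, but it may still have dimension $\ge\dim S$ and map onto $S$.
\end{itemize}

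\textbf{What would close it.} You need two facts, and you must also show $Z$ is analytic, not merely closed, before Remmert applies.
\begin{itemize}
\item[(i)] $\operatorname{Sing}(X)$ contains no component of $X_s$ for general $s$. A component of $X_s$ lying in a component $X'$ of $X$ has dimension $\ge\dim X'-\dim S$, while $\dim(\operatorname{Sing}X\cap X')<\dim X'$. Cartan--Remmert then confines the bad $s$ to a set of dimension $<\dim S$.
\item[(ii)] General fibres have no embedded components. One route is Frisch's generic flatness applied to $\mathcal{E}xt^i_{\mathcal{O}_P}(\mathcal{O}_X,\mathcal{O}_P)$ for a local embedding $X\subset P=S\times\mathbb{C}^M$. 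Where these sheaves are $S$-flat, they commute with base change and their supports drop by exactly $\dim S$ in the fibres. So the Ext-codimension form of $S_1$, which holds on the reduced space $X$, passes to $X_s$.
\end{itemize}

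\textbf{Two further points.} First, falling back on the citation in the non-proper case does not work, because the second assertion is false without properness. Take $X=\mathbb{C}$ and an entire $\varphi$ with $\varphi(0)=0$, $\varphi(n)=1/n$ and $\varphi'(n)=0$ for $n\ge1$. Set $S=\varphi(\mathbb{C})$ (open) and $f=\varphi$. Then $f$ is flat and surjective, $X$ is reduced, and every $X_{1/n}$ is non-reduced. No closed nowhere-dense analytic $T\subset S$ can contain $\{1/n\}$, since it accumulates at $0\in S$. So you should state and prove this clause under a properness hypothesis, which is all the paper uses.

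Second, in the last part $\mathcal{E}xt^i_{\mathcal{O}_X}(\mathcal{O}_X,\omega_{X/S})=0$ for $i>0$. The $S_2$-failure locus must therefore be expressed through a local embedding, as a fibrewise codimension condition on the supports of $\mathcal{E}xt^i_{\mathcal{O}_P}(\mathcal{O}_X,\omega_{P/S})$. These agree with the fibrewise $\mathcal{E}xt$ sheaves only where base change holds, for example where they are $S$-flat. So coherence alone does not make $\operatorname{NNor}(f)$ analytic.
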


For simultaneous normalization we use \cite[Corollary~5.4.3]{MR2225697} in the following form.

\begin{theorem}\cite{MR2225697}\label{thm:simultaneous normalization}
Let $f:X\to S$ be a flat holomorphic map, where $S$ is normal, $X$ is reduced and locally equidimensional, and the fibers of $f$ are reduced curves. Suppose that $\operatorname{NNor}(f)$ is proper over $S$. Then there is a nowhere-dense analytic subset $T\subset S$ such that the normalization
\begin{equation} \nu:\widetilde{X}\to X \nonumber \end{equation}
restricts over $S\setminus T$ to a simultaneous normalization. In particular, $f\circ\nu$ is flat with normal fibers over $S\setminus T$, and for every $s\in S\setminus T$ the induced map
\begin{equation} \nu_s:\widetilde{X}_s\to X_s \nonumber \end{equation}
is the normalization of $X_s$.
\end{theorem}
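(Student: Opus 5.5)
This result is quoted from \cite[Corollary~5.4.3]{MR2225697}, so the plan below reconstructs the argument rather than proving anything new. The strategy is the classical $\delta$-constant criterion for simultaneous normalization of families of reduced curves (Teissier, Raynaud, Chiang-Hsieh--Lipman). Since the criterion is local over $S$, it reduces the statement to making a single numerical invariant locally constant. We then use properness of $\operatorname{NNor}(f)$ and generic flatness to arrange this off a nowhere-dense analytic set $T$.

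\textbf{Step 1 (a total $\delta$-invariant).} For $s\in S$, define
\[
\delta(s)=\sum_{x\in \operatorname{NNor}(f)\cap X_s}\delta(X_s,x),
\qquad
\delta(X_s,x)=\dim_{\mathbb{C}}\bigl(\widetilde{\mathcal{O}}_{X_s,x}/\mathcal{O}_{X_s,x}\bigr).
\]
Here $\widetilde{\mathcal{O}}_{X_s,x}$ is the stalk at $x$ of the pushforward of the structure sheaf of the normalization of $X_s$, so the quotient is finite-dimensional. The sum is finite because $\operatorname{NNor}(f)$ is an analytic subset of $X$ (Theorem~\ref{thm:reducedness}) which is proper over $S$ and meets each one-dimensional reduced fiber in isolated points. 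Properness also makes $\operatorname{NNor}(f)\to S$ a finite map over a dense open set. Upper semicontinuity of $\delta$ along flat families of reduced curves shows that $\delta$ is upper semicontinuous on $S$. Since $\delta$ takes values in $\mathbb{Z}_{\ge0}$, it attains its generic value on the complement of a closed analytic subset.

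\textbf{Step 2 (choosing $T$).} Let $\nu:\widetilde{X}\to X$ be the normalization of the total space. Since $\nu$ is finite, $\nu_*\mathcal{O}_{\widetilde X}$ is coherent. By Frisch's theorem (Theorem~\ref{thm:flatness}) and Remmert's theorem (Theorem~\ref{thm:remmert}), its non-flat locus over $S$ has nowhere-dense analytic image; the same applies to the support of $\nu_*\mathcal{O}_{\widetilde X}/\mathcal{O}_X$, whose relevant part lies over the proper set $\operatorname{NNor}(f)$. Take $T$ to be the union of:
\begin{itemize}
\item this non-flatness image;
\item the jump locus of $\delta$ from Step 1;
\item the locus where the fibers of $\widetilde{X}$ fail to be reduced, handled by Theorem~\ref{thm:reducedness}.
\end{itemize}
Each piece is a nowhere-dense analytic subset of $S$, so $T$ is as well.

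\textbf{Step 3 (the criterion).} Over $S\setminus T$, the family $\widetilde{X}\to S$ is flat, and $\nu_*\mathcal{O}_{\widetilde X}/\mathcal{O}_X$ is flat with fiber length $\delta(s)$, which is constant. Since $S$ is normal and $\delta$ is constant, the Teissier--Chiang-Hsieh--Lipman theorem says that the fibers $\widetilde{X}_s$ are normal and that $\nu_s:\widetilde{X}_s\to X_s$, being finite and birational with normal source, is the normalization of $X_s$. Flatness of $f\circ\nu$ and normality of its fibers then follow directly.

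\textbf{Main obstacle.} The hard part is this last implication, that $\delta$-constancy together with normality of $S$ forces $\widetilde{X}_s$ to be normal. The key input is that $\widetilde{X}$ is normal of dimension $\dim S+1$. My first approach is a Serre $R_1+S_2$ argument. Flatness over the normal base, combined with constancy of the fiberwise length of $\nu_*\mathcal{O}_{\widetilde X}/\mathcal{O}_X$, should force the fiber of the normal total space to coincide with the normalization of $X_s$: any extra fiber singularity would contribute to $\delta(s)$ strictly beyond the generic value. If this runs into difficulty at non-smooth points of $S$, the fallback is to shrink $S$ further to its smooth locus, which is allowed because the singular locus of a normal space is nowhere dense.
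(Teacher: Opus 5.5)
The paper does not prove this statement. It quotes it from \cite[Corollary~5.4.3]{MR2225697} and uses it as a black box, so there is no internal argument to compare with. Reducing it to that source's $\delta$-constant criterion is a reasonable reconstruction, but as written two steps do not go through.

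\textbf{Analyticity of $T$.} Upper semicontinuity of $\delta$ in the classical topology, together with integrality, shows only that the set where $\delta$ is locally constant is open and dense. Nothing in Step~1 makes the jump locus an analytic set, and the statement requires $T$ to be analytic. You would need Zariski-analytic semicontinuity of $\delta$, meaning that each $\{\delta\ge k\}$ is analytic. That is a separate fact, and you have not justified it. A similar issue arises with Theorem~\ref{thm:reducedness}, which needs a manifold base. Sets produced over $S_{\mathrm{reg}}$ are analytic only there, and their closures in $S$ need an argument. The clean way is to work with analytic subsets of $\widetilde X$ contained in the proper set $W=\nu^{-1}(\operatorname{NNor}(f))$ and push them down by Theorem~\ref{thm:remmert}.

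\textbf{Step~3 and your ``main obstacle''.} Suppose $\widetilde X$ and $Q=\nu_*\mathcal{O}_{\widetilde X}/\mathcal{O}_X$ are flat and $\widetilde X_s$ is reduced. Then $Q_s=\nu_*\mathcal{O}_{\widetilde X_s}/\mathcal{O}_{X_s}$ has length at most $\delta(s)$, with equality exactly when $\widetilde X_s$ is normal. So asserting that this length equals $\delta(s)$ assumes the conclusion.

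Your heuristic also runs the wrong way. $\delta(s)$ depends only on $X_s$, so singularities of $\widetilde X_s$ cannot push $\delta(s)$ above its generic value; they push $\ell(Q_s)$ below it. Constancy of $\ell(Q_s)$ and of $\delta$ on a connected open set therefore only propagates normality from one fibre to the others. You still need one normal fibre, i.e.\ generic normality of the fibres of the normal space $\widetilde X$. If you instead cite the full Teissier--Chiang-Hsieh--Lipman theorem, only the analyticity gap remains.

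\textbf{That input alone proves the statement, with no $\delta$ at all.} Here is the argument.
\begin{enumerate}
\item The map $\nu$ is an isomorphism off $W$. This is because $X$ is normal off $\operatorname{NNor}(f)$, being flat over a normal base with normal fibres.
\item Let $T$ be the union of three sets, each analytic in $S$ by Remmert:
\begin{itemize}
\item $\operatorname{Sing}S$;
\item $f\nu(W\cap\operatorname{Sing}\widetilde X)$, of dimension $<\dim S$ because $\widetilde X$ is normal;
\item $f\nu$ of the locus in $W$ where the fibre of $\Omega^1_{\widetilde X/S}$ has dimension at least $2$. This has empty interior by Sard, applied off the first two sets.
\end{itemize}
Thus $T$ is a nowhere-dense analytic subset of $S$.
\item Over $S\setminus T$, the map $f\circ\nu$ is a submersion near $W$ and agrees with $f$ off $W$. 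Hence it is flat with smooth fibres.
\item Each $\nu_s$ is finite and an isomorphism off finitely many points. It is therefore the normalization of $X_s$.
\end{enumerate}
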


Now let $H:\mathcal{X}\to\mathcal{Y}$ be a holomorphic map over $U$ between proper holomorphic families of compact Riemann surfaces such that each fiber map $H_t:X_t\to Y_t$ is nonconstant. Since the families are proper, $H$ is a finite holomorphic map.

We first show that, after removing a proper analytic subset of the base, the ramification points, branch values, and their inverse images vary in disjoint holomorphic sections.

Let $R_H$ be the ramification divisor of $H$ relative to $U$, i.e. the zero divisor of \begin{equation} dH:H^*\Omega^1_{\mathcal{Y}/U}\to \Omega^1_{\mathcal{X}/U}.  \nonumber \end{equation} Its restriction to each fiber is the usual ramification divisor of $H_t:X_t\to Y_t$. Let $B_H=\sum_{i=1}^r m_i b_i(U)$ be an effective divisor on $\mathcal{Y}$ supported on finitely many holomorphic sections $b_i:U\to\mathcal{Y}$. Then $H^*B_H$ contains no fiber of $\mathcal{X}\to U$. Hence neither the ramification divisor $R_H$ nor the pullback $H^*B_H$ contains a fiber of $\mathcal{X}\to U$. The following lemma applies to both situations.

\begin{lemma}\label{lem:generic-splitting}
Let $\pi:\mathcal{X}\to U$ be a proper holomorphic submersion with compact connected Riemann surface fibers, where $U$ is a connected complex manifold. Let $\mathcal{R}$ be an effective divisor on $\mathcal{X}$ containing no fiber of $\pi$.

Then there exists a proper analytic subset $A\subset U$ such that
\begin{equation} \operatorname{Supp}(\mathcal{R})_{\mathrm{red}} \cap\pi^{-1}(U\setminus A) \to U\setminus A   \nonumber \end{equation}
is a finite unramified covering.

Consequently, if $V\subset U\setminus A$ is simply connected, there are pairwise disjoint holomorphic sections $s_1,\ldots,s_r:V\to\mathcal{X}$ and positive integers $m_1,\ldots,m_r$ such that \begin{equation} \label{eq:divisor-splitting}   \mathcal{R}|_{\pi^{-1}(V)}= \sum_{i=1}^r m_i\,s_i(V). \end{equation} For finitely many such divisors, $A$ may be chosen uniformly.
\end{lemma}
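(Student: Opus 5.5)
The plan is to study the finite map $\pi|_{\mathcal{R}}$, where $\mathcal{R}$ stands for $\operatorname{Supp}(\mathcal{R})_{\mathrm{red}}$, and to remove its discriminant together with the singular locus of $\mathcal{R}$. Since $\mathcal{R}$ is a divisor containing no fiber and $\pi$ is proper, each intersection $\mathcal{R}\cap\pi^{-1}(t)$ is a proper analytic subset of a compact curve, hence finite. Thus $\pi|_{\mathcal{R}}:\mathcal{R}\to U$ is proper with finite fibers, i.e. a finite holomorphic map. Each irreducible component of $\mathcal{R}$ has dimension $\dim U$, so each component surjects onto $U$ by Theorem~\ref{thm:remmert} and dimension count.

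\textbf{Step 1: the exceptional set.} Let $\Sigma$ be the union of $\operatorname{Sing}(\mathcal{R})$ with the set of smooth points of $\mathcal{R}$ at which $d(\pi|_{\mathcal{R}})$ fails to be an isomorphism. The latter set is the vanishing locus of a Jacobian determinant on the smooth part, and its closure is analytic in $\mathcal{R}$. We have $\dim\Sigma<\dim U$:
\begin{itemize}
\item the singular locus has lower dimension;
\item on each component, the critical locus of a surjective finite map between equidimensional spaces is a proper analytic subset, by Sard or because generic rank equals $\dim U$.
\end{itemize}
Set $A=\pi(\Sigma)$. By Theorem~\ref{thm:remmert}, $A$ is a closed analytic subset of $U$. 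It has dimension less than $\dim U$ because finite maps preserve dimension, so $A$ is proper and nowhere dense.

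\textbf{Step 2: covering over $U\setminus A$.} Over $U\setminus A$, the space $\mathcal{R}$ is a smooth manifold of dimension $\dim U$ and $\pi|_{\mathcal{R}}$ is a local biholomorphism. A proper local homeomorphism is a finite covering map, and $U\setminus A$ is connected because $A$ has real codimension at least $2$. The multiplicity of $\mathcal{R}$ is constant along each component of the smooth part, since multiplicity is constant on irreducible components of a divisor.

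\textbf{Step 3: sections.} On a simply connected $V\subset U\setminus A$, the covering trivializes, giving disjoint holomorphic sections $s_i$. Each sheet $s_i(V)$ lies in a single irreducible component of $\mathcal{R}$, so it carries a constant multiplicity $m_i$, which yields (\ref{eq:divisor-splitting}). For finitely many divisors, take the union of the sets $A$; the union is still a proper analytic subset. Alternatively, apply the argument to the sum of the divisors, adding the intersection loci of different supports to $\Sigma$. These loci are already contained in the singular set of the union, so the sections of different divisors are also disjoint.

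The main obstacle is Step 1, namely verifying that the critical and singular loci project to a \emph{proper} analytic subset. The tools here are finiteness of $\pi|_{\mathcal{R}}$, which keeps dimensions unchanged, together with Remmert's theorem, which gives analyticity of the image.
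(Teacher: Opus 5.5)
Your proposal is correct and follows the paper's proof essentially step for step: you remove the singular locus and the critical locus of the finite map $\operatorname{Supp}(\mathcal{R})_{\mathrm{red}}\to U$, push them forward by Remmert's theorem to get a proper analytic $A$, trivialize the resulting proper local biholomorphism over a simply connected $V$, and read off constant multiplicities from the irreducible components, with the uniform statement obtained by taking the union of the exceptional sets. One small caveat on your optional alternative: the intersection of two supports lies in the singular locus of their union only away from common components, and $R_H$ and $H^*B_H$ in the application do share components, so that route shows that sections coming from different divisors either coincide or are disjoint, not that they are always disjoint.
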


\begin{proof}
Set $S=\operatorname{Supp}(\mathcal{R})_{\mathrm{red}}$. Since $\mathcal{R}$ contains no fiber, $S\to U$ is proper with finite fibers and hence finite. Every irreducible component of $S$ dominates $U$ and is generically unramified. Let $S_{\mathrm{sing}}$ be the singular locus of $S$ and let $S_{\mathrm{crit}}$ be the critical locus of $S_{\mathrm{reg}}\to U$. By Theorem~\ref{thm:remmert},
\begin{equation} \pi(S_{\mathrm{sing}}\cup S_{\mathrm{crit}}) \nonumber \end{equation}
is a proper analytic subset of $U$. Away from it, $S\to U$ is finite and unramified. Over a simply connected open set, every finite unramified cover is a disjoint union of trivial covers, which gives the sections $s_i$. The coefficient of $\mathcal{R}$ along each irreducible section $s_i(V)$ is constant, which gives~(\ref{eq:divisor-splitting}). For finitely many divisors one takes the union of the corresponding exceptional subsets.
\end{proof}

Once the branch values, their inverse images, and the local degrees have been fixed in families, the branched covering itself is locally constant topologically:

\begin{lemma}\label{lem:topological-triviality}
Let $H:\mathcal{X}\to\mathcal{Y}$ be a holomorphic family of finite maps of compact Riemann surfaces over a polydisc $U$. Suppose that the branch values and all their inverse images are given by pairwise disjoint holomorphic sections, and that the local degree of $H$ along each inverse-image section is constant.

Then, after shrinking $U$ around any point $t_0$, there are smooth trivializations \begin{equation} \mathcal{X}\simeq U\times X_{t_0}, \qquad \mathcal{Y}\simeq U\times Y_{t_0},  \nonumber \end{equation} under which $H=\operatorname{id}_U\times H_{t_0}.$

The trivializations may also be chosen to make finitely many additional sections constant, provided their coincidence relations with the above sections are constant on $U$.
\end{lemma}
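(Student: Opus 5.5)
The plan is to build the trivializations sheet by sheet: first trivialize the base family $\mathcal{Y}$ so that the branch-value sections become constant, then lift this trivialization to $\mathcal{X}$ using the fact that $H$ is a covering map away from the branch locus, and finally extend the lift across the special sections using the constancy of the local degrees.

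\textbf{Step 1: trivialize $\mathcal{Y}$.} Let $b_1,\ldots,b_s:U\to\mathcal{Y}$ be the pairwise disjoint branch-value sections. Since $\mathcal{Y}\to U$ is a proper submersion, Ehresmann's theorem applies. I will use the version relative to finitely many disjoint sections: shrink $U$ to a polydisc around $t_0$, and integrate lifts of the coordinate vector fields $\partial/\partial t_k$ that are tangent to the submanifolds $b_i(U)$. Such lifts exist by a partition of unity, since the $b_i(U)$ are disjoint closed submanifolds transverse to the fibers. Integrating the coordinate fields successively along the polydisc gives a diffeomorphism
\[
\Phi_Y:U\times Y_{t_0}\to\mathcal{Y},
\]
commuting with the projections and sending $U\times\{b_i(t_0)\}$ to $b_i(U)$. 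The additional sections are handled in the same way. If two of them coincide identically, they define the same submanifold. If they never coincide, the submanifolds are disjoint, so the vector fields can again be chosen tangent to all of them. This is exactly where the hypothesis on constant coincidence relations is used.

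\textbf{Step 2: lift to $\mathcal{X}$ over the unbranched part.} Set $\mathcal{Y}^*=\mathcal{Y}\setminus\bigcup_i b_i(U)$ and $\mathcal{X}^*=H^{-1}(\mathcal{Y}^*)$. Over $\mathcal{Y}^*$ the map $H$ is a local biholomorphism, because the ramification lies over the branch sections, and it is proper. Hence $H$ restricts to a finite unramified covering $\mathcal{X}^*\to\mathcal{Y}^*$. The map $\Phi_Y$ identifies $\mathcal{Y}^*$ with $U\times Y_{t_0}^*$, and $U$ is contractible. So covering theory identifies $\mathcal{X}^*\to\mathcal{Y}^*$ with the pullback of $X_{t_0}^*\to Y_{t_0}^*$ under the projection: the two coverings are classified by the same monodromy. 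This gives a diffeomorphism
\[
\Phi_X^*:U\times X_{t_0}^*\to\mathcal{X}^*
\]
with $H\circ\Phi_X^*=\Phi_Y\circ(\operatorname{id}_U\times H_{t_0})$. Concretely, $\Phi_X^*$ is obtained by lifting the horizontal vector fields through the local diffeomorphism $H$.

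\textbf{Step 3: extend across the special sections.} This is the main obstacle. Near an inverse-image section $x_j(U)$ with constant local degree $k_j$, work in local coordinates. Choose a holomorphic fiber coordinate $w$ on $\mathcal{Y}$ near $b_i(U)$ with $w(b_i(t))=0$; this can be done by subtracting $b_i(t)$ from a local fiber coordinate. Then $H$ has the local form $w=z^{k_j}u(t,z)$ with $u$ nonvanishing, so after replacing $z$ by $z\,u^{1/k_j}$ it becomes $w=z^{k_j}$, uniformly in $t$. I will arrange in Step 1 that, on a small tubular neighborhood, $\Phi_Y$ is given by $(t,w)\mapsto(t,w)$ in these coordinates. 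This is possible by choosing the lifted vector fields to be $\partial/\partial t_k$ in these coordinates near the sections and patching by cutoffs. With this choice, the lift in Step 2 near $x_j(U)$ is, on each punctured disc, a deck-equivariant lift of the identity in $w$, namely $z\mapsto\zeta z$ for a root of unity $\zeta$. It is locally constant in $t$ and therefore extends smoothly, indeed holomorphically, across $z=0$ by $z=0\mapsto x_j(t)$. Gluing gives a diffeomorphism $\Phi_X:U\times X_{t_0}\to\mathcal{X}$ intertwining $H$ with $\operatorname{id}_U\times H_{t_0}$.

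Additional sections of $\mathcal{X}$ that lie over unbranched points are made constant by refining Step 1 to include their images as sections of $\mathcal{Y}$, again under the constant-coincidence hypothesis.
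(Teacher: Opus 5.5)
Your proof is correct. It uses the same ingredients as the paper: the holomorphic normal form $w=z^k$ along the special sections, Ehresmann on $\mathcal{Y}$, and covering theory on the unramified part. Where it differs is in how the ramification sections are handled.

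The paper removes disc neighbourhoods of the branch and inverse-image sections and trivializes the resulting unramified covering of manifolds with boundary. It must then reconcile, on the boundary circles, the trivialization from the complement with the one from the local model $w=z^k$, which it does by isotopy extension and the covering homotopy property.

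You instead build this compatibility into the connection on $\mathcal{Y}$. You take the horizontal lifts to be the coordinate fields in adapted coordinates $(t,w)$ with $w(b_i(t))=0$. Since $H(t,z)=(t,z^k)$ does not involve $t$, the lifted fields on $\mathcal{X}^*$ are then the coordinate fields in $(t,z)$. So the lifted trivialization is $z\mapsto z$ in the normal-form chart, and it extends across $x_j(U)$ with nothing to glue. (Your $\zeta$ is in fact $1$, since the lift is the identity at $t_0$.)

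What your route buys is an explicit argument that works verbatim over a multi-dimensional base. The paper's boundary-matching step requires extending a $U$-parametrized family of circle diffeomorphisms commuting with $z\mapsto z^k$ into the discs, and the paper only sketches this. The paper's route, in turn, needs no control of the connection near the sections.

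Two points should be stated explicitly.
\begin{enumerate}
\item The lift carries the punctured disc around $x_j(t_0)$ into the punctured disc around the same section $x_j(t)$. This follows from connectedness of $U\times\{0<|z|<\delta\}$ and disjointness of the neighbourhoods of the inverse-image sections; these neighbourhoods exhaust the preimage of the $w$-disc by the degree count.
\item For additional sections $s$ of $\mathcal{X}$, you are really using that the coincidence relations of the images $H\circ s$ in $\mathcal{Y}$ are constant. This is slightly more than the lemma literally states, but it is necessary for the conclusion anyway.
\end{enumerate}
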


\begin{proof}
Fix $t_0\in U$. After replacing $U$ by a sufficiently small neighborhood of $t_0$, we may make the following constructions uniformly. Near a section $s:U\to\mathcal{X}$ lying over a branch section $b:U\to\mathcal{Y}$ with local degree $k$, choose holomorphic coordinates $(t,z)$ on $\mathcal{X}$ and $(t,w)$ on $\mathcal{Y}$, compatible with the projections to $U$, such that the two sections are given by $z=0$ and $w=0$, respectively. Then $w=z^k u$ for a nowhere-vanishing holomorphic function $u$. After shrinking the coordinate neighborhood, write $u=v^k$ with $v$ holomorphic and nowhere zero. Replacing $z$ by $vz$, we obtain the local model $w=z^k$.

For each branch section $b_j$, let $s_{j,1},\ldots,s_{j,r_j}$ be the sections satisfying $H\circ s_{j,\ell}=b_j$. After shrinking $U$, choose pairwise disjoint neighborhoods $V_j$ of $b_j(U)$ and $W_{j,\ell}$ of $s_{j,\ell}(U)$, whose intersections with the fibers are discs. Shrinking these neighborhoods further if necessary, properness of $H$ ensures that $H^{-1}(V_j)=\bigcup_{\ell=1}^{r_j}W_{j,\ell}$. Removing their interiors gives an unramified covering
\begin{equation} H^\circ:\mathcal{X}^\circ\to\mathcal{Y}^\circ.    \nonumber \end{equation}
By Ehresmann's theorem, \begin{equation} \mathcal{Y}^\circ\simeq U\times Y_{t_0}^\circ.   \nonumber \end{equation} Since $U$ is contractible, covering space theory identifies $H^\circ$ with the pullback of the central covering $H_{t_0}^\circ$. Hence $\mathcal{X}^\circ$ is trivialized so that $H^\circ$ is constant in $U$.

On each boundary circle, the trivialization obtained on the complement may differ from the one coming from the local model $w=z^k$. These two trivializations are related by boundary diffeomorphisms compatible with $z\mapsto z^k$. Using isotopy extension and the covering homotopy property, we modify the trivializations near the boundary so that they agree and hence glue.

Any additional section that coincides with one of the branch or inverse-image sections is already constant in the above trivializations. For the remaining sections, the trivialization may be chosen so that they are constant, and the subsequent boundary modifications can be supported away from them.
\end{proof}

The Galois-closure argument uses the following lemma.

\begin{lemma}\label{lem:fiberwise-holomorphic}
Let $H_i:\mathcal{X}_i\to\mathcal{Y}$, $i=1,2$, be finite holomorphic maps between holomorphic families of Riemann surfaces over a complex manifold $U$. Suppose that a fiber-preserving smooth diffeomorphism
\begin{equation} F:\mathcal{X}_1\to\mathcal{X}_2 \nonumber \end{equation}
satisfies $H_2\circ F=H_1$ and restricts to a biholomorphism on every fiber. Then $F$ is biholomorphic.
\end{lemma}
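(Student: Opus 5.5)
The plan is to reduce the statement to a local question in holomorphic coordinates and then settle it with Hartogs' theorem on separate analyticity. Being biholomorphic is local, and $F$ is a bijective smooth map whose inverse is of the same type (fiber-preserving, $H_1\circ F^{-1}=H_2$, biholomorphic on fibers). So it suffices to show that $F$ is holomorphic near every point; the same argument then applies to $F^{-1}$.

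Fix $x\in\mathcal{X}_1$ over $t_0\in U$, and set $x'=F(x)$ and $y=H_1(x)=H_2(x')$. I will choose local holomorphic coordinates $(t,z)$ near $x$, $(t,z')$ near $x'$ and $(t,w)$ near $y$, all compatible with the projections to $U$ (these exist because the families are submersions). In these coordinates $F(t,z)=(t,\Phi(t,z))$, where $\Phi$ is smooth and holomorphic in $z$ for each fixed $t$. Since Hartogs' theorem says a separately holomorphic function is holomorphic, the whole task is to show that $\Phi$ is holomorphic in $t$ for each fixed $z$.

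\textbf{Unramified points.} Suppose $H_1$ is unramified at $x$ in the fiber direction. Then $H_2$ is unramified at $x'$ as well: the fiberwise local degree of $H_1=H_2\circ F$ at $x$ equals that of $H_2$ at $x'$, because $F$ is a fiberwise biholomorphism. Hence $(t,z')\mapsto(t,H_2(t,z'))$ is a local biholomorphism by the inverse function theorem, with holomorphic local inverse $\sigma$. From $H_2\circ F=H_1$ we get $F=\sigma\circ(\operatorname{id}_U\times_U H_1)$ near $x$, so $F$ is holomorphic there.

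\textbf{Ramified points.} The fiberwise ramification points of $H_1$ form the set $R$ where $\partial_z H_1=0$. This is an analytic subset of $\mathcal{X}_1$ that meets each fiber in finitely many points, so $R$ is a proper analytic subset of complex codimension at least one. By the previous step, $F$ is holomorphic on the complement of $R$. Its component $\Phi$ is continuous (indeed smooth) and locally bounded across $R$. The Riemann extension theorem for bounded holomorphic functions off a proper analytic set then shows that $\Phi$ is holomorphic everywhere. So $F$ is holomorphic, and applying the same reasoning to $F^{-1}$ shows that $F$ is biholomorphic. Alternatively, at a point of local degree $k$ one can put $H_i$ in the normal form $w=z^k$ as in the proof of Lemma~\ref{lem:topological-triviality}. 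Then $F$ becomes $z\mapsto\zeta(t)z$ with $\zeta(t)^k=1$, and continuity makes $\zeta$ constant.

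The main obstacle is making sure the unramified step is legitimate. The point is that the fiberwise local degrees of $H_1$ and $H_2$ agree at corresponding points, so $F$ carries $R$ onto the analogous set for $H_2$ and the local sections $\sigma$ exist on the complement. After that, the extension across $R$ is the only place where a nontrivial analytic fact (Riemann extension) is used, and it applies because $F$ is continuous.
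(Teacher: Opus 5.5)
Your proof is correct and is essentially the paper's argument. You show $F$ is holomorphic off the fiberwise ramification locus via the local inverse of $H_2$, then extend across that nowhere-dense analytic set; the paper does the extension by noting that the continuous $\bar\partial$-derivatives of the smooth map $F$ vanish on a dense set, whereas your Riemann-extension step needs only continuity, and your Hartogs reduction is never actually used.

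Your alternative normal-form argument tacitly assumes the ramification locus near $x$ is a holomorphic section of constant local degree. That fails in general when ramification points collide (e.g. $w=z^3-3tz$), so the Riemann-extension argument should remain the main one.
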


\begin{proof}
On the unramified locus of $H_1$, the map $H_2$ is also unramified at the corresponding point, because $F$ is biholomorphic on each fiber and $H_2\circ F=H_1$. Thus, using the local inverse branch of $H_2$,
\begin{equation} F=(H_2|_V)^{-1}\circ H_1, \nonumber \end{equation}
so $F$ is holomorphic on the complement of the ramification locus of $H_1$. This complement is dense. In local holomorphic coordinates the antiholomorphic derivatives of the smooth map $F$ are continuous and vanish on this dense open set; hence they vanish everywhere. Thus $F$ is holomorphic. Applying the same argument to $F^{-1}$ proves that $F$ is biholomorphic.
\end{proof}

\begin{proof}[Proof of Lemma~\ref{lem:relative-quotient}]
Choose a small polydisc $U_0\subset N$, and let $\pi:\mathcal{X}\to U_0$ be the marked universal curve. Let $\mathcal{D}\subset\mathcal{X}$ be the union of the marking sections, let $\mathcal{L}=\omega_{\mathcal{X}/U_0}^{\otimes 2}(\mathcal{D})$ and let $\mathcal{V}\to U_0$ be the holomorphic vector bundle with fiber $\mathcal{V}_X=Q_X N$.

After removing a proper analytic subset of $U_0$, the fixed divisor of the evaluation map $\pi^*\mathcal{V}\to\mathcal{L}$ has no vertical component, and removing it leaves a base-point-free linear system on every fiber. We therefore obtain a holomorphic map
\begin{equation} \Phi:\mathcal{X}\to\mathbb{P}_{U_0}(\mathcal{V}^\vee). \nonumber \end{equation}
Let $\mathcal{C}$ be its reduced image. By lemma~\ref{thm:flatness} lemma ~\ref{thm:reducedness} and lemma \ref{thm:simultaneous normalization}, after removing a further proper analytic subset of $U_0$ there is a holomorphic family of compact Riemann surfaces $\mathcal{Y}\to U_0$ and a finite map $\nu:\mathcal{Y}\to\mathcal{C}$ whose restriction to every fiber is the normalization of $C_t$.

By the universal property of normalization, $\Phi$ factors uniquely as $\Phi=\nu\circ H$ for a holomorphic map $H:\mathcal{X}\to\mathcal{Y}$. Since every fiber map $H_t:X_t\to Y_t$ is nonconstant, $H$ is finite. By Proposition~\ref{prop:ratio-quotient}, $H_t$ is precisely the quotient map $h_{X_t}:X_t\to Y_{X_t}$.

We now apply Lemma~\ref{lem:generic-splitting} to the ramification divisor of $H$ and to the pullback of its reduced branch divisor. After removing the coincidence loci, the branch values, their inverse images, and the marking points vary in disjoint holomorphic sections with constant local degrees. Lemma~\ref{lem:topological-triviality} then gives, locally over a sufficiently small polydisc $U\subset U_0$, smooth trivializations
\begin{equation} \mathcal{X}|_U\simeq U\times S_X,\qquad \mathcal{Y}|_U\simeq U\times S_Y, \nonumber \end{equation}
under which every $H_t$ is the same topological branched cover $h:S_X\to S_Y$ and the marking sections are constant.

Choose a connected topological Galois closure $r_0:S_Z\to S_Y$ of $h$, with deck group $G$, and a subgroup $G_X\leq G$ such that $S_Z/G_X\simeq S_X$. The marked family $\mathcal{Y}|_U\to U$ determines a holomorphic map to the corresponding Teichm\"uller space. Applying the covering construction associated with $r_0$ gives a holomorphic family $\mathcal{Z}\to U$ and a finite holomorphic map $R:\mathcal{Z}\to\mathcal{Y}|_U$. The deck transformations act smoothly on $\mathcal{Z}$, biholomorphically on every fiber, and commute with $R$; hence Lemma~\ref{lem:fiberwise-holomorphic} shows that the $G$-action on $\mathcal{Z}$ is holomorphic.

Set $\mathcal{X}'=\mathcal{Z}/G_X$. Locally, a stabilizer of order $k$ acts in a fiber coordinate by $z\mapsto\zeta z$, so the quotient is given by $w=z^k$. Thus $\mathcal{X}'\to U$ is again a holomorphic family of Riemann surfaces. Under the above topological trivializations, both $\mathcal{X}'\to\mathcal{Y}|_U$ and $\mathcal{X}|_U\to\mathcal{Y}|_U$ are the same branched cover $h$. Their fiberwise complex structures therefore agree, and Lemma~\ref{lem:fiberwise-holomorphic} gives $\mathcal{Z}/G_X\simeq\mathcal{X}|_U$.

Let $N^\circ\subset N$ be the set of points $X$ admitting a neighborhood $U$ on which the quotient maps $h_t:X_t\to Y_t$ form a holomorphic family that is locally topologically trivial as a family of branched coverings and their connected Galois closures form a holomorphic family with fixed deck group and fixed intermediate subgroup. By above argument, for every sufficiently small polydisc $U_0\subset N$, the complement of $N^\circ\cap U_0$ is contained in a proper analytic subset of $U_0$. Hence $N^\circ$ is open and dense in $N$.
\end{proof}

\bibliographystyle{amsplain}
\bibliography{references}

\end{document}